\documentclass[reqno]{amsart}
\usepackage[margin=3cm]{geometry}
\usepackage{a4wide}
\usepackage{amsmath,amsfonts,amssymb,amsthm,version}
\usepackage{mathrsfs}
\usepackage{xcolor}
\usepackage{epsfig}
\usepackage{subfigure}
\usepackage{enumerate}
\usepackage{graphicx}
\usepackage{subfigure}
\usepackage{rotating}
\usepackage{stmaryrd}
\usepackage{float}
\usepackage{upgreek}
\usepackage{amsopn}
\usepackage{upgreek}
\usepackage{multicol}
\usepackage[colorlinks,
            linkcolor=red,
            anchorcolor=green,
            citecolor=blue
            ]{hyperref}
\usepackage[all]{xy}
\numberwithin{equation}{section}
\allowdisplaybreaks[1]

\providecommand{\U}[1]{\protect\rule{.1in}{.1in}}
\newtheorem{theorem}{Theorem}

\newtheorem{lemma}[theorem]{Lemma}

\newtheorem{proposition}[theorem]{Proposition}
\newtheorem{remark}[theorem]{Remark}

\begin{document}

\title[Differentiable normal linearization of partially hyperbolic DS]
{Differentiable normal linearization  of
partially
hyperbolic dynamical systems}

\author[ Weijie Lu, Yonghui Xia, Weinian Zhang, Wenmeng Zhang]
{Weijie Lu,  Yonghui Xia, Weinian Zhang, Wenmeng Zhang}  % in alphabetical order

\address{Weijie Lu, School of Mathematics  Science, Zhejiang Normal University, Jinhua  321004, China}
\email{luwj@zjnu.edu.cn}

\address{Yonghui Xia, School of Mathematics, Foshan University, Foshan 528000, China}
\email{xiadoc@163.com}

\address{Weinian Zhang,  School of Mathematics, Sichuan University,  Chengdu 610064, China}
\email{matzwn@126.com}

\address{Wenmeng Zhang, School of Mathematical Sciences, Chongqing Normal University, Chongqing 401331,  China}
\email{wmzhang@cqnu.edu.cn.  Author for correspondence}

\date{}

\begin{abstract}
A result on $C^0$ linearization which is differentiable at the hyperbolic fixed point is known. 
In this paper, we further investigate a partially hyperbolic diffeomorphism $F$ to find a local $C^0$ conjugacy, which is $C^1$ on the center manifold, to linearize the hyperbolic component (normal to the center direction) and obtain its Takens' normal form. Our result is optimal, as it needs no non-resonant condition usually required for smooth conjugacy (e.g., as in the Takens' theorem) and the $C^{1,\alpha}$ $(\alpha>0)$ smoothness condition is sharp. For the proof, the center direction obstructs the decoupling of $F$ as the stable and unstable foliations do not intersect. We overcome this difficulty via a semi-decoupling method only with the unstable foliation, where a modified Lyapunov-Perron equation needs to be established along the center direction. Subsequent issues of cocycle reduction and differentiable linearization for an expansive fiber-preserving mapping are then addressed by the Whitney's extension theory and a lifting technique, respectively.
In the local context, our result improves the result of $C^0$ normal linearization by [C. Pugh and M. Shub, Invent. Math., 10 (1970): 187-198] to a differentiable one.
\\
{\bf Keywords:}   Partial hyperbolicity; Takens' normal form;  center manifold; invariant foliation; Lyapunov-Perron equation \\
{\bf  MSC2020:}   37C15; 37C86; 37D30
\end{abstract}

\maketitle

%\tableofcontents

\section{Introduction}

Linearization is one of the most important theories in the field of dynamical systems.
Given a smooth (analytic) diffeomorphism $F$ near its fixed point $0$,
we expect to find a smooth (analytic) transformation such that $F$ can be conjugated to its linear part $\Lambda:=DF(0)$.
In 1890,  Poincar\'{e} (\cite{Poincare}) first studied the analytic linearization   of an analytic diffeomorphism $F(x) = \Lambda x + O(\|x\|^2)$ on $\mathbb{C}^d$.
He proved that if all eigenvalues $\lambda_1,...,\lambda_p$ of $\Lambda$ lie inside  (or outside) the unit circle and satisfy the non-resonant conditions of all orders, i.e.,
$$
\lambda_i\ne \lambda_1^{n_1}\cdots \lambda_d^{n_d}, \quad \forall i\in \{1,...,d\},
$$
for all $n_1,...,n_d\in \mathbb{N}\cup\{0\}$ with $\sum_{i=1}^d n_i \ge 2$, then there is an analytic diffeomorphism
$\Phi(x)$ such that $\Phi\circ F \circ \Phi^{-1}=\Lambda$.
Later, Siegel (\cite{Siegel}) considered the analytic linearization
under an assumption that the eigenvalues satisfy the Diophantine condition
$$
|\lambda_i - \lambda_1^{n_1}\cdots \lambda_d^{n_d}|\ge \frac{C}{|n|^\nu}, \quad \forall i\in \{1,...,d\},\,  \sum_{i=1}^d n_i \ge 2,
$$
for some constants $C, \nu>0$, proving that there exists an analytic diffeomorphism $\Phi(x)$ linearizing $F$.
Brjuno (\cite{Brj-7172}) further weakened Siegel's condition and proved the same result.
The proofs of Poincar\'{e}'s
and Siegel’s analytic linearization theorems, respectively,
can also be consulted in the works of Arnold (\cite{AV-book}),
%Meyer (\cite{Mey-75}),
Moser (\cite{Mos-66}), Zehnder (\cite{Zeh-77}) and others.

In 1960s, Hartman (\cite{Hartman}) and Grobman  (\cite{Grobman})  independently proved a $C^0$ linearization of $C^1$ hyperbolic diffeomorphisms near the fixed point.
The Hartman-Grobman theorem states that, for a \(C^1\) hyperbolic diffeomorphism \(F\) on $\mathbb{R}^d$,
there exists a homeomorphism \(\Phi\) such that $F$ can be $C^0$ linearized.
Efforts were also made to generalize this result from a vicinity of a fixed point to a vicinity of an invariant manifold.
In 1970, Pugh and Shub  (\cite{PS-InventMath})  provided a normal linearization along a compact $C^1$ invariant submanifold for a normally hyperbolic diffeomorphism of a Riemannian manifold.
Compared to the Hartman-Grobman theorem, Pugh-Shub's $C^0$ normal  linearization theorem not only involves more techniques such as the fiber bundle theory and the $\lambda$-lemma, but also has more applications to problems such as the singular perturbation theory (\cite{Fen-JDE}).

Regarding smooth linearization on $\mathbb{R}^d$,
Sternberg (\cite{Sternberg1,Sternberg2}) obtained a local $C^r$ linearization of $C^N$ hyperbolic diffeomorphisms under the $N$-th order non-resonant condition, where $N = N(r)$ is a  large integer.
There is a a large number of literature to find better conditions for smooth linearization near the fixed point.
Some classical results can be found in Belicki\u{i} (\cite{Bel-78}),
ElBialy (\cite{ElB-01}),
 Hartman (\cite{Har-60}),
 %Chen (\cite{Chen-63}),
%Banyaga, de la Llave and Wayne (\cite{BDW-96}),
   Li and Lu (\cite{LL-CPAM}),
Rodrigues and Sol\`{a}-Morales (\cite{RS-JDDE}),
  Zhang, Zhang and Jarczyk (\cite{ZZJ-MA}).
 % Cuong, Doan and Siegmund (\cite{CDS-JDDE}),
 %Tong, Xu and Li (\cite{TXL-JDE}).

On the other hand, extending Sternberg's theorem, Takens (\cite{T-Top}) proved that under the $N$-th order strong non-resonant condition, i.e.,
$$
|\lambda_i|\ne |\lambda_1|^{n_1}\cdots |\lambda_d|^{n_d}, \quad \forall i\in \{1,...,d\},
$$
with $2\le \sum_{i=1}^d n_i\le N$,
the hyperbolic part (i.e., the normal direction) of a $C^N$ partially hyperbolic diffeomorphism $F$ near its center manifold can be $C^k$ linearized to obtain its Takens' normal form (see (\ref{Co-H}) below).
For recent progresses in this regard, one can refer to  (\cite{DTZ-SCM,LL-DCDSB16}).

Notice that among the results mentioned above, $C^0$ linearization does not require the non-resonant condition, while analytic or smooth linearization needs it. Although some known results (e.g., \cite{Ray-JDE,TB-JDE}) tell that $C^0$ linearization can be improved to H\"older linearization without the non-resonant condition, the H\"older linearization cannot distinguish important properties such as the characteristic directions of dynamical systems.

Thus, one expects to enhance the regularity of the Hartman-Grobman  $C^0$ linearization as much as possible without the non-resonant condition. However, Hartman's counterexample (\cite{Har-60}) shows that even a $C^\infty$ diffeomorphism does not admit $C^1$ linearization with resonance. Therefore, efforts are made to prove that the conjugacy is differentiable at the fixed point, i.e, the conjugacy $\Phi$ has the form
$$
\Phi(x)=x+o(\|x\|)\quad {\rm as}~x\to 0,
$$
due to its important applications to the studies of semilinear elliptic equations (\cite[pp. 342]{GGS-BOOK}), nonhyperbolic singularities (\cite{BDS-ETDS}), coarse-grained entropy (\cite{PT-10}), Onsager-Machlup functional (\cite{DLLR-SCM}) and so on.

The result on differentiable linearization without any non-resonant condition can be first found in \cite{vS-JDE} by van Strein in 1990 and later in \cite{Ray-JDE} and \cite{GHR-DCDS},
which stated that a smooth hyperbolic diffeomorphism on $\mathbb{R}^d$ admits a local $C^0$ linearization which is differentiable at the fixed point.
Recently, Zhang, Lu and Zhang (\cite{ZLZ-TAMS}) considered a $C^{1,\alpha}$ ($\alpha>0$) diffeomorphism $F$ and employed the method of decoupling via stable and unstable foliations to prove the differentiable linearization in a
Banach space under the spectral bandwidth condition
\begin{align*}%\label{S-ban}
\lambda_i^+/\lambda_i^- < (\lambda_k^+)^{-\alpha}, \, \forall i=1,...,k, \quad \lambda_j^+/\lambda_j^- < (\lambda_{k+1}^-)^{\alpha}, \, \forall j=k+1,...,d,
\end{align*}
where the spectrum of $\Lambda$ satisfy
$$
\lambda_1^-\le \lambda_1^+<\cdots <\lambda_k^- \le \lambda_k^+ <1 < \lambda_{k+1}^- \le \lambda_{k+1}^+ < \cdots < \lambda_d^- \le \lambda_d^+.
$$
Since the spectral condition can be automatically satisfied in $\mathbb{R}^d$, this result generalizes the previous results of van Strein et al. by lowering the smoothness condition to $C^{1,\alpha}$, which is sharp via a counterexample (i.e., a $C^1$ diffeomorphism) given in \cite{ZLZ-TAMS}. Moreover, the conjugacy was further proved to have the form
$$
\Phi(x)=x+O(\|x\|^{1+\beta})\quad {\rm as}~x\to 0,
$$
with $\beta>0$. This result was later applied to study the Onsager-Machlup functional (\cite{DLLR-SCM}),
so that the derivative of its minimizing sequence is uniformly bounded variation in the neighborhood of the critical point, which relies on the key estimate $O(\|x\|^{1+\beta})$ of the nonlinearity of $\Phi$.
%thereby guaranteeing the local convergence of the minimizing sequence (see \cite[Proposition 4]{DLLR-SCM})  and facilitating the analysis of the graph limit of the  Onsager-Machlup functional over long times.

Recently,
Lu, Xia and Zhang (\cite{LXZ-pre}) presented a differentiable linearization result in a Banach space
under a weaker spectral bandwidth condition
$$
\lambda_i^+/\lambda_i^- < (\lambda_i^+)^{-\alpha}, \, \forall i=1,...,k, \quad \lambda_j^+/\lambda_j^- < (\lambda_{j}^-)^{\alpha}, \, \forall j=k+1,...,d,
$$
which is shown to be (almost) sharp by a counter example given in \cite{RS-Prob}.
Moreover, Dragi\v{c}evi\'{c}, Zhang and Zhang (\cite{DZZ-PLMS}) proved that the differentiable linearization result holds in the presence of nonuniform hyperbolicity.

In this paper, we  establish a differentiable normal linearzation, without any non-resonant condition, under the framework of partial hyperbolicity, to obtain the Takens' normal form (see (\ref{Co-H}) below).
More specifically, we have the following result:
\begin{itemize}
\item A 
$C^{1,\alpha}$ ($\alpha\in (0,1]$)
partially hyperbolic diffeomorphism admits a $C^0$ normal linearization, where the $C^0$ conjugacy is differentiable on the center manifold with a continuous derivative.
\end{itemize}
The conjugacy exhibits $C^1$-smoothness on the center manifold, which usually requires some non-resonant conditions (for example, as in the Takens' theorem), but we do not need.
Moreover, as mentioned before, the $C^{1,\alpha}$ smoothness condition is sharp via a counterexample given in \cite{ZLZ-TAMS}, which will be elaborated in the next section (see Remark 4).
Since the center manifold can be regarded as a local version of the normally hyperbolic invariant manifold, in the local context, our result improves the result of Pugh and Shub's $C^0$ normal linearization in \cite{PS-InventMath} to a differentiable one.

In comparison with the hyperbolic case, the center direction brings a crucial difficulty. Precisely, in the hyperbolic case, stable and unstable foliations intersect transversally, allowing to decouple the system into a contraction and an expansion; however, in the partially hyperbolic case,  the center direction prevents intersection of foliations. In order to overcome the difficulty, inspired by \cite{PS-InventMath}, we introduce the method of semi-decoupling. Namely, we straighten up only the unstable foliation to obtain an expansive fiber-preserving mapping and utilize the differentiable linearization result in \cite{ZLZ-TAMS} by a lifting technique. In the construction of the unstable foliation, in order to show its regularity on the center manifold, a modified Lyapunov-Perron equation needs to be established along the center direction.

For the subsequent difficulty in cocycle reduction posed by the semi-decoupling, we will address it by means of Whitney's extension theory.
More precisely, for an expansive fiber-preserving mapping with base points on the center-stable subspace, its linear part is actually a linear cocycle depending on the center-stable variables. This linear cocycle needs to be reduced to a simpler one depending only on the center variable becasue the linear part in the Takens' normal form (see (\ref{Co-H}) below) depends only on the center variable. For this purpose, we first establish a $\beta$-H\"{o}lder conjugacy with $\beta>0$ between these two classes of cocycles, and then find a $C^{1,\beta}$ transformation whose derivative at the center-stable subspace is just the
$\beta$-H\"{o}lder conjugacy by using the Whitney extension theorem. Such a $C^{1,\beta}$ transformation helps us achieve the reduction.

This paper will be organized as follows.
%Section \ref{Sec-ma}  concerns the main theorem and some preliminaries.
In Section \ref{Sec-ma}, we first state the main theorem, and then provide the block-diagonalization of linear cocycles generated by $DF$ on the center manifold as preliminaries.
Section \ref{Sec-4} is devoted to construct the unstable foliation and show its regularity on the center manifold. Subsequently, using the semi-decoupling method, we prove the main theorem in Section \ref{sec-diff} by utilizing the reduction of a linear cocycle and the differentiable linearization of an expansive fiber-preserving mapping, which will be complementarily proved in Sections \ref{Sec-6} and \ref{Sec-7}, respectively.

%%%%%%%%%%%%%%%%%%%%%%%%%%%%%%%
\section{Main result and preliminaries}\label{Sec-ma}

\subsection{Main result}
In this subsection, we present our main result.
Consider a
$C^{1,\alpha}$ ($\alpha\in (0,1]$) 
diffeomorphism
$F:U\to \mathbb{R}^d$ ($d\in \mathbb{N}$) defined by
\begin{equation}\label{NL-F}
F(x):=Ax+f(x), \quad \forall x\in U,
\end{equation}
where $U\subset \mathbb{R}^d$ is a small neighborhood of the origin $0$, $A\in GL(\mathbb{R},d)$, $f(0)=0$ and $Df(0)=0$. Since $U$ is small enough, we see that $f$ can be regarded as a $C^{1,\alpha/2}$ diffeomorphism such that
\begin{align*}
 \|Df(x)\| \le \delta_f, \quad 
 \|D  f(x)-D  f(y)\| \le \delta_f \|x-y\|^{\alpha/2},\quad \forall x,y\in U,
\end{align*}
where $\delta_f>0$ is a small constant depending on $U$.
Then, by using a smooth cut-off function which is equal to $1$ in $U$ and equal to $0$ outside another small neighborhood $V\subset \mathbb{R}^d$ (containing $U$) of $0$, we can extend $F$ to be a global diffeomorphism such that
\begin{align}\label{Df-hold-1}
 \|Df(x)\| \le \delta_f, \quad   
 \|D  f(x)-D  f(y)\| \le \delta_f \|x-y\|^\alpha,\quad \forall x,y\in\mathbb{R}^d.
\end{align}
Here, we still use $\delta_f$ and $\alpha$ instead of $C\delta_f$ (with a constant $C$) and $\alpha/2$, without loss of generality, as all of them can be arbitrarily small.

Next, assume
that $A$ is diagonalized in block, i.e.,
$$
A:=\mathrm{diag}\, (A_s,A_c,A_u),
$$
where
$A_s:=\mathrm{diag}\, (A_1,...,A_k)$ and  $A_u:=\mathrm{diag}\, (A_{k+1},..., A_p)$ for all $1\le k <p<d$.
Moreover, assume that all eigenvalues of $A_c$ have modulus $1$ and that all eigenvalues of $A_i$ ($i=1,...,p$) have modulus $\lambda_i$, satisfying
\begin{align}\label{sp-A}
0<\lambda_s^-<\lambda_1<\cdots<\lambda_k<\lambda_s^+<1<\lambda_u^-<\lambda_{k+1}<\cdots<\lambda_p<\lambda_u^+,
\end{align}
where $\lambda_s^-,\lambda_s^+,\lambda_u^-,\lambda_u^+$ are constants.
The above block-diagonal form of $A$ corresponds to the decomposition
\begin{align}\label{ZH}
\mathbb{R}^d=X_1 \oplus \cdots \oplus X_k \oplus X_c \oplus X_{k+1}\oplus \cdots \oplus X_p.
\end{align}
In what follows, for $x_i\in X_i$ ($i=1,...,p$),  we denote
\begin{align*}
& X_s:=X_1\oplus \cdots \oplus X_k, \quad X_u:=X_{k+1}\oplus \cdots \oplus X_p,
\\
&X_{su}:=X_s\oplus X_u,\quad X_{cs}:=X_c\oplus X_s,\quad X_{cu}:=X_c\oplus X_u,
\\
& x_s:=x_1+\cdots +x_k\in X_s, \quad x_u:=x_{k+1}+\cdots +x_p\in X_u,
\end{align*}
and, for $x=x_s+x_c+x_u\in X_s\oplus X_c\oplus X_u$, we denote
\begin{align*}
&x_{su}:=x_s+x_u,\quad x_{cs}:=x_c+x_s,\quad x_{cu}:=x_c+x_u,
\\
&\pi_i x :=x_i, \quad \pi_s x :=x_s, \quad \pi_c x:=x_c, \quad \pi_u x:=x_u,
\\
&\pi_{su}:=\pi_s+\pi_u,\quad \pi_{cs}:=\pi_c+\pi_s,\quad \pi_{cu}:=\pi_c+\pi_u.
\end{align*}
For the sake of convenience in expression, we sometimes use the Cartesian product instead of the direct sum. For example, we can rewrite (\ref{ZH}) as
$
\mathbb{R}^d=X_1 \times \cdots \times X_k \times X_c \times X_{k+1}\times \cdots \times X_p
$
since they are isomorphic.

Then we have the following result on the center manifold.

\begin{lemma}\label{cen-lem}
{\rm (\cite{Ga-CMP})}
The mapping $F$ given in \eqref{NL-F} such that \eqref{Df-hold-1} and \eqref{sp-A} hold has 
a $C^{1,\alpha}$ center manifold   
$$
\mathcal{M}_c:=\{x_c+\varpi_c(x_c):x_c\in X_c\},
$$
where $\varpi_c:X_c\to X_{su}$ is 
a $C^{1,\alpha}$ mapping 
such that
$\varpi_c(0)=0$ and $D\varpi_c(0)=0$.
\end{lemma}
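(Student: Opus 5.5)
We will construct the center manifold by the Lyapunov--Perron method and then bootstrap its regularity via the fiber contraction principle. For a fixed $\eta$ with $\lambda_s^+<\eta^{-1}<1<\eta<\lambda_u^-$, chosen so that additionally $\eta^{1+\alpha}<\lambda_u^-$ and $\eta^{-(1+\alpha)}>\lambda_s^+$ (possible since $\alpha$ may be shrunk as explained after \eqref{Df-hold-1}), we work in the weighted space
$$
\mathcal{B}_\eta:=\Bigl\{ \{x(n)\}_{n\in\mathbb{Z}}\subset\mathbb{R}^d:\ \|x\|_\eta:=\sup_{n\in\mathbb{Z}}\eta^{-|n|}\|x(n)\|<\infty\Bigr\}.
$$
For each $x_c\in X_c$ we consider the Lyapunov--Perron operator
$$
(\mathcal{T}_{x_c}x)(n):=A_c^n x_c+\sum_{j=0}^{n-1}A_c^{n-1-j}\pi_c f(x(j))+\sum_{j=-\infty}^{n-1}A_s^{n-1-j}\pi_s f(x(j))-\sum_{j=n}^{\infty}A_u^{n-1-j}\pi_u f(x(j)),
$$
where the center sum is interpreted with the usual sign convention for $n<0$. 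The dichotomy \eqref{sp-A} ensures that the stable and unstable sums converge in $\mathcal{B}_\eta$ with operator norms bounded by constants depending on the gaps $\lambda_s^+\eta$ and $\eta/\lambda_u^-$. For the center part, note that $\|A_c^n\|\le C_\epsilon(1+\epsilon)^{|n|}$ for any $\epsilon>0$, and we choose $1+\epsilon<\eta$. Since $\mathrm{Lip}\,f\le\delta_f$ is small by \eqref{Df-hold-1}, $\mathcal{T}_{x_c}$ is a uniform contraction on $\mathcal{B}_\eta$. Its fixed point $x^*(\cdot;x_c)$ is Lipschitz in $x_c$, and it is the unique orbit of $F$ with $\pi_c x(0)=x_c$ and at most $\eta^{|n|}$ growth. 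We then set $\varpi_c(x_c):=\pi_{su}x^*(0;x_c)$. Invariance of $\mathcal{M}_c$ follows from the shift-invariance of the characterization, and $\varpi_c(0)=0$ holds because $x^*\equiv0$ when $x_c=0$.

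For $C^1$ smoothness, we differentiate formally to obtain the linear equation $y=A_c^n v+\mathcal{L}(Df(x^*)y)$ for $y=D_{x_c}x^*\cdot v$, where $\mathcal{L}$ is the linear part of the Lyapunov--Perron operator. This equation has a unique solution in $\mathcal{B}_\eta$. The standard obstacle is that $x\mapsto f\circ x$ is not differentiable as a map from $\mathcal{B}_\eta$ into itself. We overcome it with the usual two-weight trick: we show that $x^*$ is differentiable as a map into $\mathcal{B}_{\eta'}$ with $\eta'>\eta$ suitably larger. Equivalently, we apply the fiber contraction theorem to the pair $(x,y)$ to show that the candidate derivative is indeed the derivative and depends continuously on $x_c$. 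This gives $\varpi_c\in C^1$. Evaluating the linearized equation at $x_c=0$, where $Df(0)=0$, gives $y(n)=A_c^n v$, which has no $su$-component, so $D\varpi_c(0)=0$.

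The main obstacle is the Hölder continuity of $D\varpi_c$ with exponent $\alpha$. We estimate the difference of the derivative solutions $y_1-y_2$ associated with $x_c^1,x_c^2$. The difference satisfies the linear equation with an inhomogeneous term $(Df(x_1^*)-Df(x_2^*))y_2$. By \eqref{Df-hold-1}, this term is bounded by $\delta_f\|x_1^*(n)-x_2^*(n)\|^\alpha\|y_2(n)\|\lesssim \eta^{(1+\alpha)|n|}\|x_c^1-x_c^2\|^\alpha$. This forces us to solve the difference equation in $\mathcal{B}_{\eta^{1+\alpha}}$, and this is exactly where the spectral gap condition $\eta^{1+\alpha}<\lambda_u^-$ and $\eta^{-(1+\alpha)}>\lambda_s^+$ is needed. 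We still have to verify that the contraction constant of $\mathcal{L}\circ Df(x_1^*)$ on $\mathcal{B}_{\eta^{1+\alpha}}$ remains small. Once that holds, evaluating at $n=0$ yields $\|D\varpi_c(x_c^1)-D\varpi_c(x_c^2)\|\le C\|x_c^1-x_c^2\|^\alpha$, completing the proof that $\varpi_c$ is $C^{1,\alpha}$, in agreement with \cite{Ga-CMP}.
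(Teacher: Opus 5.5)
Your proposal is correct, but there is no in-paper proof to compare it with: the paper does not prove this lemma and simply imports it from \cite{Ga-CMP}. What you give is a self-contained Lyapunov--Perron proof in exponentially weighted sequence spaces. Its key step pays for the $\alpha$-H\"older modulus of $Df$ with one extra power of the weight, requiring $\eta^{1+\alpha}<\lambda_u^-$ and $\eta^{1+\alpha}\lambda_s^+<1$. This is the same kind of trade-off between H\"older modulus and spectral gap that the paper itself makes for the unstable foliation in Proposition~\ref{Fol-thm} (see \eqref{qq-AA}). The citation buys brevity. Your route makes the order of choices explicit: $\eta$ is fixed first from the spectrum, and then $\delta_f$ (i.e.\ $U$) is made small so that $\mathcal{L}\circ Df$ contracts on both $\mathcal{B}_\eta$ and $\mathcal{B}_{\eta^{1+\alpha}}$. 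That is consistent with the cut-off reduction the paper performs just before the lemma.

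Two small remarks. First, you do not need to shrink $\alpha$ to fit $\eta^{1+\alpha}$ into the gap. Taking $\eta$ close to $1$ suffices for any given $\alpha\in(0,1]$, whereas shrinking $\alpha$ would only give $C^{1,\alpha'}$ with $\alpha'<\alpha$. Second, because $Df$ is H\"older, the $C^1$ step needs neither a general two-weight lemma nor the fiber contraction theorem. Set $r=x^*(x_c+h)-x^*(x_c)-yh$; it satisfies $r=\mathcal{L}(Df(x^*)r+N)$ with $\|N(n)\|\le C\delta_f\eta^{(1+\alpha)|n|}\|h\|^{1+\alpha}$. The same space $\mathcal{B}_{\eta^{1+\alpha}}$ that yields your H\"older estimate therefore also gives $\|r\|_{\eta^{1+\alpha}}=O(\|h\|^{1+\alpha})$, which is the differentiability. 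So differentiability and the $C^{1,\alpha}$ bound come out of a single contraction argument.
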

\noindent By Lemma \ref{cen-lem}, we can use 
a $C^{1,\alpha}$ transformation 
$(\tilde{x}_c,\tilde{x}_{su})\mapsto(x_c,x_{su}-\varpi_c(x_c))$ to send $F$ to a new diffeomorphism whose center manifold is just the subspace $X_c$. 
We still denote the new diffeomorphism
by $F$ and assume that \eqref{Df-hold-1} holds by using the smooth cut-off function again. Note that $X_c$ is a center manifold of $F$ now.

We are in the position to state our main result of this paper.

\begin{theorem}\label{Mai-thm}
Let $F$ be 
a $C^{1,\alpha}$  diffeomorphism
on $\mathbb{R}^d$ satisfying \eqref{Df-hold-1} and \eqref{sp-A}.
Then $F$ is locally $C^0$ conjugated to its Takens’ normal form
\begin{equation}\label{Co-H}
\left(
  \begin{array}{c}
    x_s \\
    x_c \\
    x_u \\
  \end{array}
\right)
\mapsto
\left(
  \begin{array}{c}
    A_s(x_c)x_s \\
    A_c x_c+f_c(x_c) \\
    A_u(x_c)x_u \\
  \end{array}
\right)
\end{equation}
via a conjugacy $\mathcal{H}:U\subset \mathbb{R}^d\to \mathbb{R}^d$,
where $A_*(x_c):X_*\to X_*$ {\rm (}$*:=s,u${\rm )} are linear operators, being $\alpha$-H\"older with respect to $x_c\in X_c$, such that
$A_*(0)=A_*$, and $f_c:X_c\to X_c$ is 
a $C^{1,\alpha}$ mapping.
Furthermore, for any $\tilde{x}_c\in U\cap X_c$, the $C^0$ conjugacy $\mathcal{H}$ satisfies that $\mathcal{H}^{\pm 1}(\tilde{x}_c)=\tilde{x}_c$ and
\begin{align}\label{HH}
\mathcal{H}^{\pm 1}(x)=\tilde x_c+ \Delta(\tilde{x}_c)^{\pm 1}(x-\tilde{x}_c) +O(\|x-\tilde{x}_c\|^{1+\beta}) \quad {\rm as}~x\to \tilde x_c
\end{align}
for a $\beta\in (0,\alpha)$, where
$\Delta(\tilde{x}_c):\mathbb{R}^d\to \mathbb{R}^d$ is an invertible bounded linear operator depending continuously on $\tilde{x}_c$ such that $\Delta(0)=id$.
\end{theorem}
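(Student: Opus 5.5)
The plan follows the semi-decoupling strategy outlined in the introduction. We work after the $C^{1,\alpha}$ change of variables so that $X_c$ is an invariant center manifold. Then $F(x_c)=A_cx_c+f_c(x_c)$ with $f_c:=\pi_c f|_{X_c}$ of class $C^{1,\alpha}$.

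\textbf{Step 1: the unstable foliation.} The first step constructs the unstable foliation $\{W^u(y):y\in X_{cs}\}$ of $F$, with leaves given as graphs $x_u\mapsto y+x_u+h(y,x_u)$ over $X_u$. Each leaf is obtained from a Lyapunov--Perron equation for backward orbits in $X_u$ whose difference from the orbit of $y$ stays bounded, weighted by $(\lambda_u^-)^{n}$. To get regularity in the base point $y$ at points of $X_c$, we set up a modified Lyapunov--Perron equation along the center direction. This means linearizing around the center orbit $F^n(\tilde x_c)$ and measuring deviations in weighted norms adapted to the spectral gap. Using \eqref{Df-hold-1}, the aim is to show three things:
\begin{itemize}
\item $h$ is $C^{1,\alpha}$ in $x_u$;
\item $h$ is Hölder in $y$;
\item at every $\tilde x_c\in X_c$ the holonomy map $y\mapsto h(y,\cdot)$ is differentiable with derivative continuous in $\tilde x_c$, and the remainder is $O(\|y-\tilde x_c\|^{1+\beta})$.
\end{itemize}
Straightening the foliation by $\Psi(y+x_u)=y+x_u+h(y,x_u)$ conjugates $F$ to a fiber-preserving map
$$G(x_{cs},x_u)=(g(x_{cs}),\,\mathcal{A}(x_{cs})x_u+\phi(x_{cs},x_u)),$$
where $g$ is $F$ restricted to the center-stable manifold (also flattened), $\mathcal{A}(x_{cs})=\pi_uDF|$ is a Hölder cocycle, and $\phi=O(\|x_u\|^{1+\alpha})$. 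The symmetric construction applied to $F^{-1}$ handles the stable part on the center-stable manifold. We apply it to $g$, which is partially hyperbolic with $X_c$ as its center manifold and the stable direction now the expanding direction for $g^{-1}$.

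\textbf{Step 2: cocycle reduction.} The second step reduces $\mathcal{A}(x_{cs})$ to $A_u(x_c):=\mathcal{A}(x_c)$. We look for a Hölder family $P(x_{cs})\in GL(X_u)$ with $P(x_c)=id$ and
$$P(g(x_{cs}))\,\mathcal{A}(x_{cs})=A_u(\pi_c g(x_{cs}))\,P(x_{cs}).$$
It is built by a contraction argument along forward $g$-orbits. This converges because $x_{s}$ contracts at rate $\lambda_s^+$ while the Hölder distance $\|\mathcal{A}(x_{cs})-\mathcal{A}(x_c)\|\lesssim\|x_s\|^\alpha$. The bandwidth spread of $A_u$ is compensated once $\beta$ is taken small. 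Since $P$ is only $\beta$-Hölder, we use Whitney's extension theorem to produce a $C^{1,\beta}$ map $\Theta$ near $X_{cs}$ such that $\Theta|_{X_{cs}}=id$ and $D\Theta(x_{cs})=\mathrm{diag}(id,P(x_{cs}))$. Conjugating by $\Theta$ replaces $\mathcal{A}(x_{cs})$ by $A_u(x_c)$, keeping a remainder that is $O(\|x_u\|^{1+\beta})$.

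\textbf{Step 3: fiberwise linearization.} The third step linearizes the expansive fiber-preserving map $(x_{cs},x_u)\mapsto(g(x_{cs}),A_u(x_c)x_u+\tilde\phi)$ fiberwise. We lift to the space of sequences along $g$-orbits, where it becomes a hyperbolic (purely expansive) map on a Banach space. The differentiable linearization theorem of \cite{ZLZ-TAMS} then applies, since the bandwidth condition holds for $\beta$ small. The result is a conjugacy $x_u\mapsto x_u+O(\|x_u\|^{1+\beta})$, uniform in the base point. The same is done for the stable coordinate of $g$ via $g^{-1}$. Composing all these transformations gives $\mathcal{H}$. Its linear part at $\tilde x_c$ is $\Delta(\tilde x_c)$, the product of the derivatives of the straightenings, of $\Theta$ and of the fiber conjugacies, so $\Delta(0)=id$, and $\mathcal{H}^{-1}$ follows by inverting.

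I expect Step 1 to be the main obstacle: proving that the unstable holonomy is differentiable at points of $X_c$ with an $O(\|\cdot\|^{1+\beta})$ remainder, when it is only Hölder elsewhere. The difficulty is that the center orbit neither contracts nor expands. I would first try weighting by $(\lambda_u^-)^{-n}\mu^{n}$ with $\mu$ slightly above $1$ in the modified Lyapunov--Perron equation, then estimate the second-order remainder using $\alpha$-Hölder continuity of $Df$. This requires $\beta<\alpha$ and $\mu^{1+\beta}<\lambda_u^-$.
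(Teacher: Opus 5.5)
Your architecture is the paper's. You straighten only the unstable foliation, using a Lyapunov--Perron equation written along the center orbit. You reduce $A_u(x_{cs})$ to $A_u(x_c)$ and realize the transfer map by Whitney extension. You then linearize the expansive fiber-preserving map by lifting and applying \cite{ZLZ-TAMS}, and repeat on $X_{cs}$ for the stable direction.

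The step that fails as written is Step~2. Solving the cohomological equation along forward $g$-orbits means taking $P(x_{cs})=\lim_{n\to\infty}\mathcal{A}_u(0,n;x_c)\mathcal{A}_u(n,0;x_{cs})$. The $k$-th increment of this limit is only bounded by
\[
\|\mathcal{A}_u(0,k+1;x_c)\|\,\|A_u(g^k(x_{cs}))-A_u(g^k(x_c))\|\,\|\mathcal{A}_u(k,0;x_{cs})\|\lesssim \big((\lambda_u^+/\lambda_u^-)(\lambda_s^+)^{\alpha}\big)^{k}.
\]
The same ratio is the contraction constant of $Q\mapsto A_u(x_c)^{-1}Q(g(x_{cs}))\mathcal{A}(x_{cs})$ acting on corrections that vanish like $\|x_s\|^{\gamma}$, $\gamma\le\alpha$. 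So your argument needs the fiber-bunching condition $(\lambda_u^+/\lambda_u^-)(\lambda_s^+)^{\alpha}<1$, which is not assumed. The unstable spectrum may spread over several moduli $\lambda_{k+1}<\cdots<\lambda_p$, while $\alpha$ is small or $\lambda_s^+$ is close to $1$.

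Taking $\beta$ small cannot compensate for this. A small exponent is the price for H\"older continuity of the limit (the case $\rho\tau_2>1$ of Lemma~\ref{Holder-lem}). Existence of the limit needs $\tau_1<1$, and lowering the exponent only pushes $(\lambda_s^+)^{\beta}$ toward $1$.

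The missing idea is the narrow-band splitting in Step~1 of the proof of Lemma~\ref{co-redu}:
\begin{enumerate}
\item The invariant subbundles $E_i(x_{cs})$ belonging to the bands $[\lambda_i-\varsigma,\lambda_i+\varsigma]$ are H\"older continuous \cite[Proposition 3.9]{Pesin}.
\item Using this, first conjugate $A_u(x_{cs})$ by a $\beta_E$-H\"older $P_1(x_{cs})$ to $\mathrm{diag}(A_{k+1}(x_{cs}),\dots,A_p(x_{cs}))$.
\item Only then solve blockwise via $B_i(x_{cs})=\lim_{n}\mathcal{A}_i(0,n;x_c)\mathcal{A}_i(n,0;x_{cs})$. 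This converges because inside one band $\frac{\lambda_i+\varsigma}{\lambda_i-\varsigma}(\lambda_s^+)^{\beta_E}<1$ once $\varsigma$ is small.
\end{enumerate}

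There is also an index slip in your equation. With $P(x_c)=id$, the relation $P(g(x_{cs}))\mathcal{A}(x_{cs})=A_u(\pi_c g(x_{cs}))P(x_{cs})$ would force $A_u(x_c)=A_u(g(x_c))$. The generator on the right must be $A_u(x_c)$ with $x_c=\pi_c x_{cs}$. The convergence argument also needs the stable foliation of $g$ to be straightened beforehand, so that $x_{cs}$ and $x_c$ lie on one stable leaf and $\pi_c g(x_{cs})=g(x_c)$.

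Two smaller points. In Step~1, your constraint $\mu^{1+\beta}<\lambda_u^-$ ignores that for a base point with nonzero stable component the backward orbit leaves the center orbit like $(\lambda_s^-)^{n}$, $n\le 0$. The paper absorbs this by interpolating
\[
\|Df_{g^n(x_c)}\|\le(2\delta_f)^{1-\beta/\alpha}\big(M\|F^n(x)+tq_n-g^n(x_c)\|^{\alpha}\big)^{\beta/\alpha},
\]
which forces $\varrho<\lambda_u^-(\lambda_s^-)^{\beta}$. In Step~3, lifting to sequences along individual $g$-orbits leaves you two things to prove. First, the $0$-th component of the lifted conjugacy must depend only on the $0$-th entry. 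Second, the fiber conjugacies must vary continuously with the base point. The paper instead lifts to $C^0_b(X_{cs},X_u)$, which gives continuity directly. It proves independence of the chosen section $\zeta_x$ by tracing the construction of \cite{ZLZ-TAMS} through weak-unstable foliations and the limits $\varphi_\ell$.
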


\begin{remark}{\rm From (\ref{HH}) we see that $\mathcal{H}$ is differentiable at $\tilde x_c\in {\mathcal M}_c$ such that
$D\mathcal{H}^{\pm 1}(\tilde{x}_c)=\Delta(\tilde{x}_c)^{\pm 1}$, which are continuous with respect to $\tilde{x}_c$.
It implies that the conjugacy is $C^0$ in $\mathbb{R}^d$
which exhibits $C^1$-smoothness on the center manifold. Namely, Theorem \ref{Mai-thm} shows that we  achieve the normal linearization which is differentiable
on the center manifold for a partially hyperbolic system. Remind that, in comparison with Takens' theorem, our result does not need any non-resonant condition.
}
\end{remark}

\begin{remark}
{\rm When the center direction does not exist (i.e., $\tilde{x}_c\equiv 0$), we see that \eqref{Co-H} becomes $(x_s,x_u)\mapsto (A_sx_s,A_ux_u)$ and that $\Delta(0)=id$, implying
$$
\mathcal{H}^{\pm 1}(x)=x+O(\|x\|^{1+\beta})\quad {\rm as}~x\to 0.
$$
This result is consistent with \cite[Theorem 7.1]{ZLZ-TAMS} in the hyperbolic case.
On the other hand, it is impossible to weaken the smoothness condition of $C^{1,\alpha}$ to $C^1$ in general.
In fact, even in the hyperbolic case, we can define a mapping
$$
F_{*}(x):=
\begin{cases}
\mu \int_0^x (1-\frac{1}{\log \tau}) \, d\tau, \quad &x\ne 0,
\\
0, &x=0,
\end{cases}
$$
near its fixed point $0$ with $\mu \in (0,1)$. Then, it is clear that  $F_{*}$ is a $C^1$ contraction with $DF_{*}(0)=\mu$, but it is not $C^{1,\alpha}$ near $0$ for any $\alpha>0$. As shown in \cite[p.4998-4999]{ZLZ-TAMS}, $F_{*}$
does not admit differentiable linearization at $0$, which means the sharpness of the $C^{1,\alpha}$ smoothness condition.
}
\end{remark}

%%%%%%%%%%%%%%%%%%%%%%%%%%%%%%%%%%%%%%%%%%%%%%%%%%%%%%%%%%%%%%%%%%%%%%
\subsection{Some lemmas}\label{sec-sm}

In this subsection, we consider linear cocycles generated by $DF$ on the center manifold $X_c$, and show that there exist transformations which can block-diagonalize the linear cocycles.
Due to the importance of the center variable $x_c$, in what follows, we sometimes use the notations
$$
(x_c,x_s,x_u):=x_s+x_c+x_u\in \mathbb{R}^d.  %,\quad (x_c,x_s):=x_c+x_s\in X_{cs},\quad (x_c,x_u):=x_c+x_u\in X_{cu}.
$$
Recall that the center manifold of $F$ is straightened up, i.e.,
$\pi_{su}F(x_c)=0$,
which implies that
\begin{align}\label{c-su}
\partial_{x_{c}}(\pi_{su}F)(x_c)=0,\quad \forall x_c\in X_c,
\end{align}
where we used the fact that $\partial_x h(x,0)=0$ whenever $h(x,0)=0$ for any $C^1$ mapping $h(x,y)$. Then, we give the following lemma.
\begin{lemma}\label{lm-diagC}
There is a $C^{1,\alpha}$ {\rm (}$\alpha>0$ is small{\rm )} transformation $\Upsilon:\mathbb{R}^d\to \mathbb{R}^d$ with $D\Upsilon(0)=id$ to send $F$ to a new diffeomorphism $\Upsilon\circ F\circ \Upsilon^{-1}$ {\rm (}still denoted by $F${\rm )} such that 
\begin{align}\label{CC0}
\partial_{x_{s}}(\pi_c F)(x_c)=0, \quad \partial_{x_u}(\pi_c F)(x_c)=0, \quad \forall x_c\in X_c.
\end{align}
Meanwhile, the new $F$ still satisfies \eqref{Df-hold-1} and \eqref{c-su}.
\end{lemma}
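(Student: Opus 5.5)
We look for $\Upsilon$ of the form
\[
\Upsilon(x_c,x_s,x_u)=(x_c+\Theta(x_c)x_{su},\,x_s,\,x_u),
\]
where $\Theta:X_c\to L(X_{su},X_c)$ is $\alpha$-H\"older with $\Theta(0)=0$. Along $X_c$ the derivative of $F$ has the block-triangular structure forced by \eqref{c-su}:
\[
DF(x_c)=\begin{pmatrix} C(x_c) & B(x_c)\\ 0 & H(x_c)\end{pmatrix},
\qquad C=\partial_{x_c}\pi_cF,\quad B=\partial_{x_{su}}\pi_cF,\quad H=\partial_{x_{su}}\pi_{su}F.
\]
Here $C$, $B$, $H$ are $\alpha$-H\"older in $x_c$, $B(0)=0$, $C(x_c)$ is close to $A_c$, and $H(x_c)$ is close to $\mathrm{diag}(A_s,A_u)$.

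Since $\Upsilon$ fixes $X_c$ pointwise, the conjugated map $G=\Upsilon\circ F\circ\Upsilon^{-1}$ still leaves $X_c$ invariant, so \eqref{c-su} persists. With $g(x_c):=\pi_cF(x_c)$, the chain rule at $x_c\in X_c$ gives
\[
\partial_{x_{su}}(\pi_c G)(x_c)=-C(x_c)\Theta(x_c)+B(x_c)+\Theta(g(x_c))H(x_c).
\]
So \eqref{CC0} is equivalent to the cohomological equation
\[
\Theta(x_c)=C(x_c)^{-1}\big[B(x_c)+\Theta(g(x_c))H(x_c)\big].
\]

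We split $\Theta=(\Theta_s,\Theta_u)$ and treat the two blocks differently. For the $s$-block, $H_s$ contracts (norm $\le\lambda_s^+$) while $C^{-1}$ nearly preserves norm (norm close to $1$), so the map $\Theta_s\mapsto C^{-1}[B_s+\Theta_s(g)H_s]$ is a contraction in the sup norm. For the $u$-block we rewrite the equation as
\[
\Theta_u(y)=\big[C(g^{-1}y)\Theta_u(g^{-1}y)-B_u(g^{-1}y)\big]H_u(g^{-1}y)^{-1},
\]
which is a contraction because $\|H_u^{-1}\|\le(\lambda_u^-)^{-1}$. The Banach fixed point theorem, applied in the space of bounded continuous functions, gives unique continuous solutions, and uniqueness forces $\Theta(0)=0$ because $B(0)=0$.

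The main obstacle is proving that $\Theta$ is $\alpha$-H\"older; this is why $\alpha$ must be taken small. The plan is to show the contraction preserves a closed ball of $\alpha$-H\"older functions. In the $s$-block the H\"older seminorm of $\Theta(g(\cdot))H$ is controlled by $\mathrm{Lip}(g)^\alpha\,\|H_s\|\,\|C^{-1}\|[\Theta]_\alpha$. Since $\mathrm{Lip}(g)$ and $\mathrm{Lip}(g^{-1})$ are close to $1$ (as $A_c$ has unimodular spectrum and $\delta_f$ is small), after shrinking $\alpha$ if necessary we get $\mathrm{Lip}(g)^\alpha\lambda_s^+(1+\epsilon)<1$, and the analogous inequality with $g^{-1}$ and $(\lambda_u^-)^{-1}$ for the $u$-block. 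Replacing $\alpha$ by this smaller value is harmless, consistent with the convention after \eqref{Df-hold-1}.

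With $\Theta$ $\alpha$-H\"older, the $\Upsilon$ above is only H\"older in $x_c$, so it is not $C^{1,\alpha}$. To fix this we only need a $C^{1,\alpha}$ map whose derivative at $X_c$ equals that of $\Upsilon$. We build one by smoothing: set
\[
\Upsilon(x)=\Big(x_c+\int\Theta(x_c+\|x_{su}\|z)\rho(z)\,dz\;x_{su},\;x_s,\;x_u\Big)
\]
for a mollifier $\rho$, or equivalently apply the Whitney extension theorem to the $1$-jet $(x_c,\,id+\Theta(x_c)\pi_{su})$ on $X_c$. Standard estimates show this map is $C^{1,\alpha}$, restricts to the identity on $X_c$, and has derivative along $X_c$ matching the jet, so the identity above for $\partial_{x_{su}}(\pi_c G)(x_c)$ still holds. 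Moreover $D\Upsilon(0)=id$ because $\Theta(0)=0$.

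Finally, since $\Upsilon$ is near the identity in $C^1$ locally, applying the cut-off again gives \eqref{Df-hold-1} for the new $F$.
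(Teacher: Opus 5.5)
Your approach is sound and genuinely different from the paper's, but the step where you solve the $s$- and $u$-blocks separately needs repair, as explained in the second paragraph.

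\textbf{Comparison of routes.} In Appendix~\ref{Appen-B} the paper obtains the invariant bundles along $X_c$ from the \emph{nonlinear} strong stable and unstable foliations of $F$. It solves the classical Lyapunov--Perron equation and shows that $\partial_{z_s}p_0(x_c,0)$ is close to $id_s$ and $\alpha$-H\"older in $x_c$ with a small constant. It then takes $E_s(x_c)$, $E_u(x_c)$ to be the tangent spaces of the leaves through $x_c$, and Whitney-extends a H\"older matrix field $P(x_c)$ sending them onto $X_s$, $X_u$. You instead work only with the linear cocycle $DF|_{X_c}$ over $g$. Your $\Theta$ encodes the invariant complement $\{(-\Theta(x_c)v,v):v\in X_{su}\}=E_s(x_c)\oplus E_u(x_c)$ of $X_c$, found directly as the fixed point of a cohomological (graph-transform) equation. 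You straighten only this combined bundle, which is exactly what \eqref{CC0} requires; the $s$--$u$ coupling is removed later by $\mathcal G$ in \eqref{GGG}. Your route is more elementary and gives $\|\Theta\|_{C^\alpha}=O(\delta_f)$ explicitly. It does not really need $\alpha$ small: since $\mathrm{Lip}(g^{\pm1})$ is close to $1$, your H\"older ball is invariant for every $\alpha\in(0,1]$. Also, your $\Upsilon$ moves only the $c$-component, so it maps $X_{cs}$ and $X_{cu}$ into themselves. The paper's route gives more, since it straightens $E_s$ and $E_u$ separately, but at the cost of heavier foliation estimates.

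\textbf{The decoupling step.} At this stage only \eqref{c-su} holds, so $H(x_c)$ is \emph{not} block-diagonal. Writing $H_{ab}:=\partial_{x_b}(\pi_aF)(x_c)$ for $a,b\in\{s,u\}$, the blocks $H_{us}$, $H_{su}$ vanish only after $\mathcal G$ (cf.~\eqref{s-u-u-s}). Your two equations are therefore coupled:
\[
C\Theta_s=B_s+(\Theta_s\circ g)H_{ss}+(\Theta_u\circ g)H_{us},\qquad
C\Theta_u=B_u+(\Theta_s\circ g)H_{su}+(\Theta_u\circ g)H_{uu}.
\]
They cannot be solved block by block as written. The fix is routine because $\|H_{us}\|,\|H_{su}\|\le\delta_f$. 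Consider the joint map on $(\Theta_s,\Theta_u)$, with the $u$-equation solved backward along $g^{-1}$ as you do. It is a contraction for the maximum of the two sup norms, with constant $\max\{\|C^{-1}\|\|H_{ss}\|,\ \|C\|\|H_{uu}^{-1}\|\}+O(\delta_f)<1$. The same $O(\delta_f)$ perturbation keeps your H\"older ball invariant. These bounds, like your claim that $\|C^{\pm1}\|$ is close to $1$, require adapted norms, because $A_c$ may have Jordan blocks.

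\textbf{The final cut-off.} Re-applying a cut-off is unnecessary and slightly risky. The map $A+\chi\,(G-A)$ keeps $X_c$ invariant. However, it preserves \eqref{CC0} only if $\partial_{x_{su}}\chi=0$ on $X_c$, since $\pi_c(G-A)(x_c)=g(x_c)-A_cx_c$ need not vanish. It is cleaner to note that $\|\Theta\|_{C^\alpha}=O(\delta_f)$ holds globally. This uses the small H\"older constant in \eqref{Df-hold-1}. Your mollified extension, or a bounded Whitney extension operator as in the paper, then satisfies $\|D\Upsilon-id\|_{C^{0,\alpha}(\mathbb{R}^d)}=O(\delta_f)$. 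That yields \eqref{Df-hold-1} for the new map directly.
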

\noindent The proof of this lemma will be mainly divided into two parts: 1. Use the classical Lyapunov-Perron method to show that the tangent spaces of leaves of the stable and unstable foliations at any $x_c\in X_c$ are small perturbations of $X_s$ and $X_u$, respectively; 2. use the Whitney's extension theorem to find a $C^{1,\alpha}$ transformation to straighten up those tangent spaces
along $X_c$. Since the idea of the first part is elementary and the idea of the second part will be given in details in Section \ref{Sec-6} for the reduction of cocycles, we postpone the proof of Lemma \ref{lm-diagC} to Appendix \ref{Appen-B}.

Next, since the spectral conditions
 $\lambda_s^+<1^{1+\alpha}$  and $1^{1+\alpha} < \lambda_u^-$ hold, we have the classical result of smooth center-stable/center-unstable manifolds for $F$ (see e.g. \cite[Theorem 4.1]{HPS-Book}).

\begin{lemma}\label{IM}
Assume that $F$ is a $C^{1,\alpha}$  diffeomorphism
 satisfying \eqref{Df-hold-1} and \eqref{sp-A}.
Then there exist $C^{1,\alpha}$  center-stable and center-unstable invariant manifolds
$\mathcal{M}_{cs}$ and $\mathcal{M}_{cu}$ of $F$, i.e., there are two $C^{1,\alpha}$ mappings $\varpi_{cs} :X_{cs}\to X_u$ and $\varpi_{cu}:X_{cu}\to X_s$ such that
\begin{align*}
 \mathcal{M}_{cs} =\{x_{cs}+\varpi_{cs}(x_{cs}):x_{cs}\in X_{cs}\},
\quad \mathcal{M}_{cu} =\{x_{cu}+\varpi_{cu}(x_{cu}):x_{cu}\in X_{cu}\},
\end{align*}
where $\varpi_{cs}(x_c)=\varpi_{cu}(x_c)=0$ and $D\varpi_{cs}(x_{cs}), D\varpi_{cu}(x_{cu})$ are globally small such that $D\varpi_{cs}(0)=D\varpi_{cu}(0)=0$.
\end{lemma}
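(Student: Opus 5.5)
We will obtain Lemma \ref{IM} from the Lyapunov--Perron method applied to the globally modified map $F=A+f$. By \eqref{Df-hold-1}, $f$ is globally Lipschitz with small constant $\delta_f$. We treat the center-stable case; the center-unstable case follows by applying the same argument to $F^{-1}$, whose linear part $A^{-1}$ swaps the roles of $X_s$ and $X_u$.

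\textbf{Step 1: the graph as a fixed point.} Fix a rate $\eta$ with $1<\eta^{-1}$ slightly, chosen so that
\[
\lambda_s^+<\eta^{-1}\le 1\le \eta<\lambda_u^-,
\]
and, more importantly, so that $\eta^{1+\alpha}<\lambda_u^-$ (possible since $1^{1+\alpha}<\lambda_u^-$). For $\xi\in X_{cs}$, consider forward orbits $\{x(n)\}_{n\ge 0}$ in the weighted space
\[
\ell_\eta:=\Bigl\{\{x(n)\}_{n\ge 0}:\sup_{n\ge0}\eta^{-n}\|x(n)\|<\infty\Bigr\}.
\]
Such orbits satisfy the Lyapunov--Perron equation
\begin{align*}
x(n)={}&A_{cs}^n\xi+\sum_{j=0}^{n-1}A_{cs}^{\,n-1-j}\pi_{cs}f(x(j))
-\sum_{j=n}^{\infty}A_u^{\,n-1-j}\pi_u f(x(j)).
\end{align*}
Since $\|A_{cs}^n\|\le C\eta^n$ and $\|A_u^{-m}\|\le C(\lambda_u^-)^{-m}$, the right-hand side is a contraction on $\ell_\eta$ once $\delta_f$ is small relative to the spectral gap. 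This yields a unique solution $x(\cdot;\xi)$, Lipschitz in $\xi$. We then set
\[
\varpi_{cs}(\xi):=\pi_u x(0;\xi).
\]
Invariance of the graph follows from uniqueness via the shift. Because $X_c$ is invariant and \eqref{c-su} gives $\pi_u F(x_c)=0$, the orbit of $x_c$ stays in $X_c$ with zero $u$-component; uniqueness then gives $\varpi_{cs}(x_c)=0$.

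\textbf{Step 2: $C^1$ smoothness.} Differentiate the fixed-point equation formally in $\xi$. This produces a linear fixed-point equation for $Dx(\cdot;\xi)$ in $\ell_\eta$ with contraction constant $O(\delta_f)$, so it has a unique solution. To show that it is the true derivative, we use the standard two-weight argument: estimate the remainder
\[
x(\cdot;\xi')-x(\cdot;\xi)-Dx(\cdot;\xi)(\xi'-\xi)
\]
in a slightly larger weighted space $\ell_{\eta'}$ with $\eta<\eta'$ and $\eta'^{1+\alpha}<\lambda_u^-$. The uniform H\"older continuity of $Df$ from \eqref{Df-hold-1} makes this estimate close.

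\textbf{Step 3: H\"older continuity of the derivative.} The main obstacle is showing that $D\varpi_{cs}$ is $\alpha$-H\"older. The difference $Dx(\cdot;\xi)-Dx(\cdot;\xi')$ satisfies the same linear equation, with an inhomogeneity bounded by
\[
\|Df(x(j;\xi))-Df(x(j;\xi'))\|\,\|Dx(j)\|\le \delta_f\,\eta^{\alpha j}\|\xi-\xi'\|^\alpha\,\eta^j .
\]
We therefore solve it in the space with weight $\eta^{(1+\alpha)n}$. The unstable sum converges precisely because $\eta^{1+\alpha}<\lambda_u^-$, and the center-stable sum grows at most like $\eta^{(1+\alpha)n}$. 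This gives
\[
\|D\varpi_{cs}(\xi)-D\varpi_{cs}(\xi')\|\le C\|\xi-\xi'\|^{\alpha}.
\]

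\textbf{Step 4: smallness of the derivative.} Global smallness of $D\varpi_{cs}$ comes from the $O(\delta_f)$ bound on the $u$-component of $Dx(0)$. Finally, $D\varpi_{cs}(0)=0$: by \eqref{c-su}, $\partial_{x_c}\pi_uF=0$ on $X_c$, and $Df(0)=0$, so along the zero orbit the linearized equation decouples. Its unique solution therefore has vanishing $u$-component at $n=0$.
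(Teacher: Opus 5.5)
Your proof is correct in substance, but it takes a different route from the paper. The paper gives no argument for Lemma \ref{IM}. It simply cites the classical pseudo-hyperbolic invariant manifold theorem of Hirsch--Pugh--Shub, noting only that the rate conditions $\lambda_s^+<1^{1+\alpha}$ and $1^{1+\alpha}<\lambda_u^-$ hold. You instead give a self-contained Lyapunov--Perron proof, and this makes explicit where the rate condition enters: the unstable sum in the $\eta^{(1+\alpha)n}$-weighted difference equation converges precisely because $\eta^{1+\alpha}<\lambda_u^-$. The citation buys brevity and generality. Your route buys several things:
\begin{itemize}
\item consistency with the Lyapunov--Perron machinery the paper itself uses in Section \ref{Sec-4} and Appendix \ref{Appen-B};
\item an explicit $O(\delta_f)$ bound on $D\varpi_{cs}$;
\item an actual proof of $\varpi_{cs}|_{X_c}=0$, via invariance of the straightened $X_c$ plus uniqueness of the Lyapunov--Perron solution, which the paper asserts without comment.
\end{itemize}

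A few details should be tightened.
\begin{itemize}
\item As literally written, Step 1 is not a contraction. If $\|A_{cs}^n\|\le C\eta^n$ at the same rate as the weight, the center-stable sum is of order $n\eta^{n-1}$. You need $\|A_{cs}^n\|\le C\gamma^n$ with $1<\gamma<\eta$. This is available because the center eigenvalues lie on the unit circle, so Jordan blocks give only polynomial growth. Also, your opening condition $1<\eta^{-1}$ should read $\eta>1$.
\item In Step 2 the remainder weight must satisfy $\eta^{1+\alpha}\le\eta'<\lambda_u^-$, since the inhomogeneity is $O(\eta^{(1+\alpha)j}\|\xi'-\xi\|^{1+\alpha})$.
\item The identity $\pi_uF(x_c)=0$ comes from the straightening $\pi_{su}F(x_c)=0$ stated just before \eqref{c-su}, not from \eqref{c-su} itself.
\item You should check that the orbit of $x_c$ lies in $\ell_\eta$. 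This holds because $F|_{X_c}=A_c+\pi_c f|_{X_c}$ grows at most like $C(\gamma+C\delta_f)^n$ with $\gamma+C\delta_f<\eta$.
\item $D\varpi_{cs}(0)=0$ already follows from $Df(0)=0$ alone.
\item For $\mathcal{M}_{cu}$, remark that $F^{-1}=A^{-1}+\tilde f$ with $\tilde f$ obeying bounds of the type \eqref{Df-hold-1} (with a constant multiple of $\delta_f$), and that $X_c$ is $F^{-1}$-invariant.
\end{itemize}
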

\noindent
By the same arguments as the ones given below Lemma \ref{cen-lem}, we see that the $C^{1,\alpha}$ transformation
\begin{align}\label{GGG}
\mathcal{G}: (x_c,x_s,x_u) \mapsto (x_c, \,x_s-\varpi_{cu}(x_{cu}), \,x_u-\varpi_{cs}(x_{cs}))
\end{align}
can straighten up the center-stable and center-unstable manifolds. Moreover, $\varpi_{cs}(x_c)=\varpi_{cu}(x_c)=0$ means that 
$
\partial_{x_c}\varpi_{cs}(x_c)=\partial_{x_c}\varpi_{cu}(x_c)=0,
$
implying that $\mathcal{G}(x_c)=x_c$
and 
$$
D{\mathcal G}(x_c)=\left(
\begin{array}{ccc}
*& 0& 0
\\
0& *&*
\\
0& *&*
\end{array}
\right),
$$
where $*$ denote elements that may be non-zero,
and $D{\mathcal G}(x)-id$ is globally small such that $D{\mathcal G}(0)=id$. Thus, the transformation $D{\mathcal G}$ does not change \eqref{c-su} and \eqref{CC0}, but it may change \eqref{Df-hold-1} a little such that
\begin{align}\label{Df-hold}
 \|Df(x)\| \le \delta_f, \quad   
 \|D  f(x)-D  f(y)\| \le M \|x-y\|^\alpha,\quad \forall x,y\in\mathbb{R}^d,
\end{align}
where $M>0$ is a constant. Note that the small coefficient $\delta_f>0$ in the second inequality of \eqref{Df-hold-1} is only used to prove Lemma \ref{lm-diagC}, and \eqref{Df-hold} is enough in what follows. After the transformation ${\mathcal G}$, we may assume that 
$\mathcal{M}_{cs}=X_{cs}$ and $\mathcal{M}_{cu}=X_{cu}$,
implying
$
\pi_uF(x_c,x_s,0)=0
$
and
$
\pi_sF(x_c,0,x_u)=0.
$
Hence,
\begin{align}\label{s-u-u-s}
\partial_{x_s}(\pi_uF)(x_c)=0,\quad \partial_{x_u}(\pi_sF)(x_c)=0, \quad
\forall x_c\in X_c,
\end{align}
because $\partial_{x_{cs}}(\pi_uF)(x_c,x_s,0)=0$ and $\partial_{x_{cu}}(\pi_sF)(x_c,0,x_u)=0$
by the fact stated below \eqref{c-su}.

Combining (\ref{c-su}), (\ref{CC0}) with (\ref{s-u-u-s}), we obtain that
\begin{align}\label{block-diag}
DF(x_c)=\left(
\begin{array}{ccc}
\partial_{x_c}(\pi_cF)(x_c)& 0&
0
\\
0& \partial_{x_s}(\pi_sF)(x_c)&
0
\\
0& 0&
\partial_{x_u}(\pi_uF)(x_c)
\end{array}
\right),
\quad \forall x_c\in X_c.
\end{align}
Now, we define $g(x_c):=F(x_c)=\pi_c F(x_c)$ and $A(x_c):=DF(x_c)$, 
and  consider the following linear cocycle  
\begin{align}\label{group}
\mathcal{A}(m,n;x_c):=
\begin{cases}
 A(g^{m-1}(x_c)) \cdots A(g^{n}(x_c)), \quad & m\ge n+1,
\\
id, \quad & m=n,
\\
A(g^m(x_c))^{-1} \cdots A(g^{n-1}(x_c))^{-1}, \quad & m\le n-1.
\end{cases}
\end{align}
By \eqref{Df-hold} and \eqref{block-diag}, we see that
$A(x_c)$ is a small perturbation of $A$, given in \eqref{NL-F}. Thus, applying
the roughness theory of exponential dichotomy (see e.g. \cite{Henry-Book, ZLZ-JDE}), the dichotomy spectrum $\Sigma(A(x_c))$ of $\mathcal{A}(m,n;x_c)$ satisfies
\begin{align}\label{D-sp}
\Sigma(A(x_c))   \subset  \bigcup_{i=1}^p [\lambda_i-\varsigma,\lambda_i+\varsigma]
\cup [1-\varsigma,1+\varsigma]
  \subset
 [\lambda_s^-, \lambda_s^+  ]
   \cup [\lambda_u^-, \lambda_u^+ ]
   \cup  [1-\varsigma, 1+\varsigma ]
\end{align}
for a small constant $\varsigma\ge 0$, where $\lambda_s^-,\lambda_s^+,\lambda_u^-,\lambda_u^+$ are given in (\ref{sp-A}).
Then, by \eqref{block-diag}-\eqref{D-sp}, there is a $K\ge 1$ such that
\begin{align}\label{ED}
\begin{split}
\|\mathcal{A}(m,n;x_c)\pi_s\| \le K (\lambda_s^+)^{m-n}, \quad & \forall m\ge n,
\\
\|\mathcal{A}(m,n;x_c)\pi_u\| \le K (\lambda_u^-)^{m-n}, \quad & \forall m\le n,
\end{split}
\end{align}
and
\begin{align}\label{S-ED}
\begin{split}
\|\mathcal{A}(m,n;x_c)\pi_s\| \le K (\lambda_s^-)^{m-n}, \quad & \forall m\le n,
\\
\|\mathcal{A}(m,n;x_c)\pi_u\| \le K (\lambda_u^+)^{m-n}, \quad & \forall m\ge n.
\end{split}
\end{align}
Moreover,  we see that
\begin{align}\label{g-center}
\begin{split}
 \|\mathcal{A}(m,n;x_c)\pi_c\| \le K (1+\varsigma)^{m-n}, \quad & \forall m\ge n,
\\
\|\mathcal{A}(m,n;x_c)\pi_c\| \le K (1-\varsigma)^{m-n}, \quad & \forall m\le n.
\end{split}
\end{align}
Note that, in the above trichotomy \eqref{ED}-\eqref{g-center}, the projections $\pi_s,\pi_u,\pi_c$ are independent of $x_c$, and this is because the cocycle $\mathcal{A}(m,n;x_c)$ has been block-diagonalized, as seen in \eqref{block-diag}.

Next, we give a result of the stable foliation of $g(x_{cs}):=\pi_{cs}F(x_{cs})$ on the center-stable manifold $X_{cs}$, which will be proved at the end of Section \ref{Sec-4}.

\begin{lemma}\label{IF}
Suppose that $F$ is a $C^{1,\alpha}$ diffeomorphism
 such that  \eqref{Df-hold}, \eqref{block-diag}, \eqref{ED}-\eqref{g-center}  hold. Then
there is a stable foliation of $g:=\pi_{cs}F|_{X_{cs}}$ on $X_{cs}$
with leaves
$$
\mathcal{W}_s(x_{cs})=\{z_s+ h_s(x_{cs},z_s):z_s\in X_s\}, \quad \forall x_{cs}\in X_{cs},
$$
where $h_s:X_{cs} \times X_s \to X_c$ is
$C^{1,\beta}$-smooth with a small constant $\beta>0$ such that
\begin{align}\label{hbars}
h_s(x_{c},0)=x_c\quad {\rm and}\quad Dh_s(x_c,0)=\Pi_{c}, \quad \forall x_c\in X_c,
\end{align}
where %$id_{c}$ is the identity mapping on $X_{c}$.
 $\Pi_c:X_{cs}\times X_s\to X_{c}$ is a projection satisfying
$\Pi_c(x_{cs},z_s)=x_{c}$.
\end{lemma}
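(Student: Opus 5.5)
We will construct the stable foliation of $g:=\pi_{cs}F|_{X_{cs}}$ by a Lyapunov--Perron argument, with the leaf through each base point $x_{cs}$ obtained as the graph over $X_s$ of a map into $X_c$. Write $g(x_c,x_s)=(g_c(x_c,x_s),g_s(x_c,x_s))$. By \eqref{block-diag}, at points of $X_c$ the linearization is $\mathrm{diag}(\partial_{x_c}g_c(x_c),\partial_{x_s}g_s(x_c))$, and by \eqref{ED}--\eqref{g-center} the $s$-block contracts at rate $\lambda_s^+$ while the $c$-block grows at most like $(1\pm\varsigma)^{n}$.

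\textbf{Step 1: the leaf equation.} For a base point $x_{cs}$ and a point $y\in X_{cs}$, set $w_n:=g^n(y)-g^n(x_{cs})$. Then $y\in\mathcal{W}_s(x_{cs})$ if and only if $\sup_{n\ge0}\|w_n\|\gamma^{-n}<\infty$ for a rate $\gamma\in(\lambda_s^+,1-\varsigma)$. Writing $g(x+w)-g(x)=L(x)w+R(x,w)$, with $L$ the block-diagonal part of $D g$ taken along the orbit projected to $X_c$ and $R$ small Lipschitz by \eqref{Df-hold}, we obtain the Lyapunov--Perron equation
\begin{align*}
w_n^s&=\mathcal{A}_s(n,0)\,z_s+\sum_{j=0}^{n-1}\mathcal{A}_s(n,j+1)\,\pi_s R_j(w_j),\\
w_n^c&=-\sum_{j\ge n}\mathcal{A}_c(n,j+1)\,\pi_c R_j(w_j),
\end{align*}
posed on the weighted space $\sup_n\gamma^{-n}\|w_n\|<\infty$. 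The gap $\lambda_s^+<\gamma<1-\varsigma$ together with the smallness $\delta_f$ makes the right-hand side a contraction. Its fixed point gives $w^c_0=:\tilde h_s(x_{cs},z_s)$, and we set $h_s(x_{cs},z_s):=\pi_c x_{cs}+\tilde h_s(x_{cs},z_s)$, so that the leaf is $\{x_{cs}+(\tilde h_s,z_s)\}$ reparametrized over $z_s$.

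\textbf{Step 2: regularity and the normalization \eqref{hbars}.} Taking $z_s=0$ at $x_{cs}=x_c$, the zero sequence solves the equation, because $X_c$ is invariant and $R(x_c,0)=0$; hence $h_s(x_c,0)=x_c$. Differentiating formally, the derivative solves the linearized Lyapunov--Perron equation. At base points $x_c\in X_c$ with $z_s=0$ we have $R\equiv0$ and $D_wR=0$ there, because the decomposition was taken block-diagonal along $X_c$. The derivative in $z_s$ therefore has zero $c$-component, and the derivative in the base variable is the projection onto $X_c$, which gives $Dh_s(x_c,0)=\Pi_c$.

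For $C^1$ smoothness we apply the fiber-contraction theorem on the weighted space. For the Hölder continuity of $Dh_s$ with exponent $\beta$ we estimate differences using $\|Df(x)-Df(y)\|\le M\|x-y\|^\alpha$. This requires the spectral-gap condition $(\lambda_s^+)^{\beta}\cdot\frac{\lambda_s^+}{\gamma}<1$ and its analogue for the center growth, $(1+\varsigma)\gamma^{\beta}<1$ and similar. These hold once $\beta$ and $\varsigma$ are small, which is why only a small $\beta$ is claimed.

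\textbf{Main obstacle.} The hard part is the $C^{1,\beta}$ dependence on the base point $x_{cs}$. The center direction has no gap from $1$, so sums over the base orbit $g^j(x_{cs})$ in the center grow like $(1+\varsigma)^j$. Hölder estimates of $D_{x_{cs}}$ produce factors $\|g^j(x_{cs})-g^j(x'_{cs})\|^{\alpha}\lesssim(1+\varsigma)^{j\alpha}\|x_{cs}-x'_{cs}\|^{\alpha}$. I would handle these by choosing the weighted norms so that $\gamma^{-1}(\lambda_s^+)$ and $(1+\varsigma)^{1+\beta}\gamma^{\beta}$ stay below $1$, and by interpolating: splitting each sum at an index $N\sim|\log\|x_{cs}-x'_{cs}\||$ converts the Lipschitz and $\alpha$-Hölder pieces into a $\beta$-Hölder bound with $\beta<\alpha$. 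If this direct approach fails, a fallback is to prove that $h_s$ is $C^{1,\beta}$ only near $X_c$, where \eqref{hbars} is needed, and then smooth elsewhere with a cut-off.
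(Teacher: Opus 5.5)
Your derivation of \eqref{hbars} is correct and takes a different route from the paper. The base derivative is $\Pi_c$ because the zero sequence solves your equation at \emph{every} base point when the displacement $z_s$ is $0$, so $\tilde h_s(\cdot,0)\equiv 0$. At a center base point the linear part is exactly $Dg(g^j(x_c))$, which is block-diagonal by \eqref{block-diag}. Hence $R_j(0)=0$ and $D_wR_j(0)=0$, and the differentiated equation forces $\partial_{z_s}w^c_n=0$, i.e.\ the leaf through $x_c$ is tangent to $X_s$. This step needs no regularity in the base point. Joint continuity of $\partial_{z_s}\tilde h_s$ is standard from the Lyapunov--Perron construction, and it gives $\tilde h_s(x_{cs},z_s)=\int_0^1\partial_{z_s}\tilde h_s(x_{cs},tz_s)z_s\,dt=o(\|z_s\|)$ as $(x_{cs},z_s)\to(\tilde x_c,0)$. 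Hence $Dh_s(\tilde x_c,0)=\Pi_c$, also after reparametrizing by the $s$-coordinate as in the statement. The paper instead proves the quantitative bound \eqref{p-reg} from \eqref{LP-cs}, linearized along $g^k(x_c)$, exactly as in Proposition~\ref{Fol-thm}. This yields $\|h_s(x_{cs},z_s)-x_c\|\le L\|(x_s,z_s)\|^{1+\beta}$ in \eqref{hs-2}. The paper's route gives a uniform $1+\beta$ remainder with the same machinery used for the unstable foliation; yours is shorter but only qualitative.

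Where your proposal falls short is the $C^{1,\beta}$ smoothness of $h_s$ in the base point, which the lemma asserts and which is needed later for $\phi$ in \eqref{phi-cs}. Your sketch would not go through as written, for two reasons.
\begin{enumerate}
\item The linear part of your equation is $Dg$ along the center orbit attached to the base point. Because $g$ is only $C^{1,\alpha}$, this depends on $x_{cs}$ only $\alpha$-H\"older continuously. So the right-hand side is not differentiable in $x_{cs}$, and you cannot run fiber contraction on it in the base variable. The paper avoids this by noting that the solution of \eqref{LP-cs} coincides with that of the classical Lyapunov--Perron equation with constant linear part. There, $x_{cs}$ enters only through $g^k(x_{cs})$ inside the $C^{1,\alpha}$ nonlinearity.
\item Even in the classical equation, the base derivative does not decay at rate $\gamma$. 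Its inhomogeneous term has size $\|Df(g^k(x_{cs})+p_k)-Df(g^k(x_{cs}))\|\,\|Dg^k(x_{cs})\|\lesssim(\gamma^{\alpha}(1+\varsigma))^k$. So the center sum over $k\ge n$ converges only under the bunching condition $(\lambda_s^+)^\alpha(1+\varsigma)<1-\varsigma$. The derivative must then be measured with a weaker weight $\mu^{-n}$, where $\gamma^\alpha(1+\varsigma)<\mu<1-\varsigma$.
\end{enumerate}
Your listed conditions miss this. The condition $(1+\varsigma)\gamma^{\beta}<1$ requires $\log(1+\varsigma)<\beta|\log\gamma|$, so it gets harder, not easier, as $\beta\to 0$; the governing exponent is the given $\alpha$. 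Your fallback of proving smoothness only near $X_c$ does not help, since base points near $X_c$ are exactly where the difficulty sits. The fix is what the paper does: pass to the classical equation and invoke the bunching result of \cite{ZZJ-MA} (Lemmas 2--3), adapted from $C^{1,1}$ to $C^{1,\alpha}$. With that in hand, your argument for \eqref{hbars} completes the lemma.
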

\noindent Remind that the $C^{1,\beta}$-smoothness of $h_s$ is basically guaranteed by the bunching condition $(\lambda_s^+)^{\alpha}(1+\varsigma)<1-\varsigma$ with small $\varsigma>0$ for the stable foliation (see \cite[Lemmas 2-3]{ZZJ-MA}), and we just need to show that \eqref{hbars} holds, which will be proved at the end of Section \ref{Sec-4}. 

Lemma \ref{IF} enables us to define a $C^{1,\beta}$ transformation $\phi:X_{cs}\to X_{cs}$ by
\begin{align}\label{phi-cs}
\phi(x_{cs}):=x_{cs}+\rho(x_{cs})(h_s(x_c,x_s)-x_c),\quad \forall x_{cs}=(x_c,x_s)\in X_{cs},
\end{align}
where $\rho:X_{cs}\to \mathbb{R}$ is a smooth cut-off function which is equal to $1$ in a small neighborhood $\tilde U\subset X_{cs}$ of the origin and equal to $0$ outside another small neighborhood $\tilde V\subset X_{cs}$ (containing $\tilde U$) of the origin. It follows from \eqref{hbars} that $\phi(x_c)=x_c$ and $D\phi(x_c)=id_{cs}$, the identity mapping on $X_{cs}$,  for all $x_c\in X_c$. Thus, the new diffeomorphism $(\phi+id_u)^{-1}\circ F \circ (\phi+id_u)$, which is still denoted by $F$ and assumed to have the $C^{1,\alpha}$-smoothness as both $\alpha$ and $\beta$ are small, can be verified to satisfy \eqref{Df-hold} and \eqref{block-diag}. Moreover, since the transformation $\phi$ actually straightens up the stable foliation in $\tilde U\subset X_{cs}$, now we may assume that
\begin{align}\label{Wcs-Stp}
h_s(x_{cs},z_s)=x_c,\quad \forall x_{cs},z_s\in \tilde U.
\end{align}

Finally,  in view of Lemma \ref{lm-diagC},
\eqref{GGG} and \eqref{phi-cs}, we may define $\mathcal{T}:\mathbb{R}^d\to \mathbb{R}^d$ by
\begin{align}\label{TTT}
 \mathcal{T}(x):=  (\phi+id_u)\circ\mathcal{G}^{-1}\circ \Upsilon^{-1} (x)  %\circ \mathcal{L}_b(x)
\end{align}
and verify that $\mathcal{T}$ is a $C^{1,\alpha}$ diffeomorphism satisfying $\mathcal{T}(x_c)=x_c$, $D\mathcal{T}(0)=id$ and 
$D\mathcal{T}(x_c)=D\mathcal{G}^{-1}(x_c) D\Upsilon^{-1}(x_c)$
for  $x_c\in X_c$. Then,
for any $\tilde{x}_c\in X_c$,
\begin{align}\label{DTT}
\|\mathcal{T}(x)-\tilde{x}_c-D\mathcal{T}(\tilde{x}_c)(x-\tilde{x}_c)\|\le L\|x-\tilde{x}_c\|^{1+\alpha}, \quad \forall x\in \mathbb{R}^d,
\end{align}
with a constant $L>0$.
Putting 
$\Delta(\tilde{x}_c):=D\mathcal{T}(\tilde{x}_c)$,
which is an invertible bounded linear operator depending continuously on $\tilde{x}_c$ such that $\Delta(0)=id$, we see from \eqref{DTT} that
\begin{align}\label{T-Re}
\mathcal{T}(x)=\tilde x_c+\Delta(\tilde{x}_c)(x-\tilde{x}_c)+O(\|x-\tilde{x}_c\|^{1+\alpha}).
\end{align}
Similarly, one can obtain that
$$
\mathcal{T}^{-1}(x)=\tilde{x}_c+\Delta(\tilde{x}_c)^{-1}(x-\tilde{x}_c)+O(\|x-\tilde{x}_c\|^{1+\alpha}).
$$

To summary, in what follows of this paper, we always assume that $F$ satisfies \eqref{Df-hold}, \eqref{block-diag} and \eqref{Wcs-Stp}, unless stated otherwise (only in Section \ref{Sec-4} we do not need \eqref{Wcs-Stp}).

%%%%%%%%%%%%%%%%%%%%%%%%%%%%%%%%%%%%%%%%%%%%%%%%%%%%
\section{Regularity of unstable foliation}\label{Sec-4}

In this section, we assume that $F$ satisfies \eqref{Df-hold} and \eqref{block-diag}, while \eqref{Wcs-Stp} is not needed. We investigate  the regularity of the unstable foliation $\{\mathcal{W}_u(x)\}_{x\in \mathbb{R}^d}$ of $F$,
whose leaves can be formulated by
\begin{align}\label{SF-1}
\mathcal{W}_u(x)=\Big\{z\in\mathbb{R}^d: \sup_{n\le 0}\{ \varrho^{-n}\| F^n(z)-F^n(x)\|\}<\infty\Big\},\quad \forall x\in\mathbb{R}^d,
\end{align}
for a constant $\varrho\in (1+\varsigma, \lambda_u^-)$ (see e.g.
 \cite{CHT-JDE}).
Then we have the following result.

\begin{proposition}\label{Fol-thm}
Suppose that $F$ is a $C^{1,\alpha}$ diffeomorphism
 such that \eqref{ED}-\eqref{g-center} hold.
Then there exists an unstable foliation of $F$ with leaves
\begin{align}\label{unst-foli}
\mathcal{W}_u(x)=\{z_u+h_u(x,z_u): z_u\in X_u \},\quad \forall x\in\mathbb{R}^d,
\end{align}
where $h_u: \mathbb{R}^d \times X_u \to X_{cs}$ is continuous in both variables and is $C^{1,\alpha}$-smooth in $z_u$ with a globally bounded derivative.
Furthermore, $h_u$ satisfies that $h_u(x,\pi_u x)=\pi_{cs} x$ and
\begin{align}\label{hh-u}
\|h_u(x,z_u)-\pi_{cs}x\|
\le L\|(\pi_{su} x,z_u)\|^{1+\beta},\quad \forall x\in \mathbb{R}^d,~\forall z_u\in X_u,
\end{align}
where $0< \beta<(\log(1+\varsigma)-\log\lambda_u^-)/\log\lambda_s^-$.
\end{proposition}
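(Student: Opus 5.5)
The plan is to obtain the leaves as fixed points of a Lyapunov--Perron operator in backward time. For $x\in\mathbb{R}^d$ we write $F^n(z)=F^n(x)+w_n$ for $n\le0$ and linearize along the reference orbit. The existence part is classical. We use the spaces $\{w=(w_n)_{n\le 0}:\sup_{n\le0}\varrho^{-n}\|w_n\|<\infty\}$ with $\varrho\in(1+\varsigma,\lambda_u^-)$. The constant matrix $A$ is hyperbolic relative to this rate, since $\lambda_s^+,1\pm\varsigma<\varrho<\lambda_u^-$. So we rewrite $w_{n+1}=Aw_n+f(F^n(x)+w_n)-f(F^n(x))$ as the equation
\[
w_n=A^{n}\pi_u\bigl(z_u-\pi_u x\bigr)-\sum_{j=n}^{-1}A^{n-1-j}\pi_u\,\delta f_j(w_j)+\sum_{j=-\infty}^{n-1}A^{n-1-j}\pi_{cs}\,\delta f_j(w_j),
\]
where $\delta f_j(w):=f(F^j(x)+w)-f(F^j(x))$. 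Because $\|Df\|\le\delta_f$, this is a contraction, and the fixed point $w(x,z_u)$ depends continuously on $(x,z_u)$. We then set $h_u(x,z_u):=\pi_{cs}x+\pi_{cs}w_0(x,z_u)$, which gives \eqref{unst-foli} and the identity $h_u(x,\pi_ux)=\pi_{cs}x$ (the fixed point is $0$ when $z_u=\pi_u x$). The $C^{1,\alpha}$ regularity in $z_u$, with a globally bounded derivative, follows by the fibre contraction/formal-derivative argument in the same weighted space. The H\"older estimate on $Df$ closes because differentiating in $z_u$ costs weight $\varrho^{-n}$, and $\varrho^{\alpha}$ times the center/stable rate still gives a spectral gap below $\lambda_u^-$. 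This part is standard (e.g.\ \cite{CHT-JDE}), so I would only sketch it.

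The real content is \eqref{hh-u}: near the center manifold $X_c$, the leaves are tangent to $X_u$ to order $1+\beta$. The point is that we should linearize not with the constant $A$ but along the center orbit, which is where the "modified Lyapunov--Perron equation" comes in. For $x$ near $X_c$, let $x_c:=\pi_c x$ and consider the center orbit $g^n(x_c)\in X_c$, which is invariant by \eqref{c-su}. The cocycle $\mathcal{A}(m,n;x_c)$ is block diagonal by \eqref{block-diag} and satisfies the trichotomy \eqref{ED}--\eqref{g-center} with $x_c$-independent projections. We then write $F^n(z)-g^n(x_c)=:v_n$ and use
\[
v_{n+1}=A(g^n(x_c))v_n+r_n(v_n),\qquad r_n(v):=F(g^n(x_c)+v)-g^{n+1}(x_c)-DF(g^n(x_c))v,
\]
with $\|r_n(v)\|\le M\|v\|^{1+\alpha}$ by \eqref{Df-hold}. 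We do this simultaneously for the orbit of $z$ and of $x$. Their difference $w_n$ satisfies a Lyapunov--Perron equation driven by $\mathcal{A}$, with the $\pi_u$-part prescribed at $n=0$ and with nonlinearity $r_n(v_n^z)-r_n(v_n^x)$.

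The estimate then goes as follows. The $\pi_{cs}$-component of $w_0$ is a sum $\sum_{j\le -1}\mathcal{A}(0,j+1;x_c)\pi_{cs}[r_j(v^z_j)-r_j(v^x_j)]$. The linear part contributes exactly $\pi_{cs}(x-z)$ restricted appropriately, and it is zero in the $su$-direction because the cocycle has no coupling there. Hence $h_u(x,z_u)-\pi_{cs}x$ is controlled by the remainder terms. In backward time, $\|v_j\|\lesssim (\lambda_s^-)^{j}\|\pi_{s}x\|+(\lambda_u^-)^{j}\|(\pi_u x,z_u)\|$ by \eqref{S-ED}, \eqref{ED}. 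So the nonlinear terms are of size $\|v_j\|^{1+\alpha}$, while $\mathcal{A}(0,j+1)\pi_{cs}$ grows at most like $(1-\varsigma)^{j}$ or $(\lambda_s^-)^{j}$ as $j\to-\infty$. The stable component grows backward, so summability is the issue. I would truncate at a time $N\approx \log\|(\pi_{su}x,z_u)\|/\log\lambda_s^-$, i.e.\ the time when the backward orbit reaches size $O(1)$. Up to that time I would use the $1+\alpha$ remainder bound, and beyond it only the global Lipschitz bound from the weighted space. Balancing the two pieces gives the power $1+\beta$ with $\beta<(\log(1+\varsigma)-\log\lambda_u^-)/\log\lambda_s^-$. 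This is the stated threshold: the ratio $(1+\varsigma)/\lambda_u^-$ raised to $N$ gives $\|\cdot\|^{\beta}$.

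I expect the main obstacle to be this last step. The backward stable growth $(\lambda_s^-)^{j}$ competes with the weight, and we must make the weighted-space solution and the center-orbit-based expansion consistent. We must also handle $\pi_c x\neq$ the base of the center orbit uniformly, with constants independent of $x_c$, which relies on the trichotomy constants $K$ being uniform.
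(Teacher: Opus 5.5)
Your setup for the key estimate is the paper's. Your $r_j(v^z_j)-r_j(v^x_j)$ is exactly $f_{g^j(x_c)}(F^j(x)+q_j)-f_{g^j(x_c)}(F^j(x))$ in \eqref{LP-eqs}, and the weighted space is the same. Your tail estimate is also correct: for $|j|>N$ you use the Lipschitz bound $2\delta_f\|w_j\|$ with $\|w_j\|\le C\varrho^{j}r$ and $r:=\|(x_{su},z_u)\|$, which is \eqref{Dif-Fo2}.

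\textbf{The gap is in the head, $|j|\le N$.} There you bound the nonlinear terms by the remainder bound $\|r_j(v)\|\le M\|v\|^{1+\alpha}$, i.e.\ by $\|v_j\|^{1+\alpha}$. This throws away the only decaying quantity, $w_j$. The base orbit genuinely drifts from the center orbit: $\|v^x_j\|=\|F^j(x)-g^j(x_c)\|$ grows like $(\lambda_s^-)^{j}\|x_{su}\|$ as $j\to-\infty$ when $x_s\ne0$ (cf.\ \eqref{Dif-Fo5}). By your choice of $N$ it is of size $1$ at $j=-N$. The terms with $|j|$ near $N$ are then of order $M\|\mathcal{A}(0,j+1;x_c)\pi_c\|\approx M(1+\varsigma)^{N}$. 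So the head is only bounded by $C(1+\varsigma)^N\ge C$, which does not even tend to $0$. Separate bounds with a shorter truncation time still give some $\beta>0$, but not the stated threshold.

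Also, the propagator does not grow like $(\lambda_s^-)^j$. In fact $\|\mathcal{A}(0,j+1;x_c)\pi_s\|\le K(\lambda_s^+)^{-j-1}$ contracts, and only the $\pi_c$ part grows, like $(1+\varsigma)^{-j-1}$ by \eqref{g-center}. The backward rate $(\lambda_s^-)^{j}$ belongs to the drift $v^x_j$, not to $\mathcal{A}$.

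\textbf{The fix} is to apply the mean value theorem to the difference and keep the factor $\|w_j\|$:
\[
\|r_j(v^z_j)-r_j(v^x_j)\|\le \sup_{t\in[0,1]}\|Df(F^j(x)+tw_j)-Df(g^j(x_c))\|\,\|w_j\|\le \min\bigl\{2\delta_f,\,M(\|v^x_j\|+\|w_j\|)^{\alpha}\bigr\}\,C\varrho^{j}r .
\]
Now only the H\"older factor sees the growth $((\lambda_s^-)^{j}r)^{\alpha}$. With $(\lambda_s^-)^{-N}r\approx 1$, the head is
\[
\lesssim r^{1+\alpha}\sum_{m=1}^{N}\bigl((1+\varsigma)(\lambda_s^-)^{-\alpha}/\varrho\bigr)^{m}\lesssim r^{1+\alpha}\log(1/r)+r\bigl((1+\varsigma)/\varrho\bigr)^{N}.
\]
This is the same size as the tail, and $((1+\varsigma)/\varrho)^N=r^{\beta_\varrho}$ with $\beta_\varrho=\log(\varrho/(1+\varsigma))/|\log\lambda_s^-|$. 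The stated threshold is approached only as $\varrho\uparrow\lambda_u^-$, since $w_j$ decays at rate $\varrho<\lambda_u^-$, not at rate $\lambda_u^-$. As in the paper, $\beta\le\alpha$ is implicitly needed.

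With this correction your truncation is a valid variant of the paper's argument. The paper does not split the sum. It uses the interpolation $\|Df_{g^n(x_c)}(y)\|\le(2\delta_f)^{1-\beta/\alpha}(M\|y-g^n(x_c)\|^{\alpha})^{\beta/\alpha}$, a smooth version of your $\min$. This sits inside a bootstrap for $\sup_{n\le0}\varrho^{-n}\|q_n-\mathcal{A}(n,0;x_c)(z_u-x_u)\|/r^{1+\beta}$, and the threshold is encoded in the choice of $\varrho$ in \eqref{va-1-u}. Your direct estimate of $\pi_{cs}w_0$ from the a priori bound $\|w_j\|\le C\varrho^jr$ avoids that bootstrap.

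The consistency issue you flag is routine and is handled as in Lemma \ref{lm-LPeq}. Both your constant-$A$ equation and the center-orbit equation are equivalent to $(F^n(z)-F^n(x))_{n\le0}\in\mathfrak{U}_\varrho$.
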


In the proof, we use $C,K,L,M$ to denote any positive constants that arise as upper bounds for certain expressions, rather than introducing a more precise indexing $C_i,K_i,L_i,M_i$ for distinct constants.
Moreover, we use $x_*$ instead of $\pi_* x$ with $*=c,s,u,cs,su$ for simplification.

\begin{proof}[Proof of Proposition {\rm \ref{Fol-thm}.}]
To begin with,
we establish the Lyapunov-Perron equation for $F$   along the center manifold.
Let $\mathfrak{U}_\varrho$ consist of all sequences $(q_n)_{n\le 0}\subset \mathbb{R}^d$ such that $\sup_{n\le 0}\varrho^{-n}\|q_n\|<\infty$
for a constant
\begin{align}\label{va-1-u}
\varrho\in (1+\varsigma, \lambda_u^-(\lambda_s^-)^\beta)\subset (1+\varsigma, \lambda_u^-).
\end{align}
The choice of $\varrho$ is possible because $\beta<(\log(1+\varsigma)-\log\lambda_u^-)/\log\lambda_s^-$.
It is a Banach space equipped with the norm $\|\cdot\|_\varrho$ defined by
$
\|\mathbf{q}\|_{\varrho} :=\sup_{n\le 0}\varrho^{-n}\|q_n\|,
$
where $\mathbf{q}:=(q_n)_{n\le 0}\in\mathfrak{U}_\varrho$. Then,
\eqref{SF-1} can be rewritten as
\begin{align}\label{SF-2}
\mathcal{W}_u(x)=\Big\{z\in\mathbb{R}^d: (F^n(z)-F^n(x))_{n\le 0} \in \mathfrak{U}_{\varrho}\Big\}.
\end{align}

Next, recalling that $g$ is defined above \eqref{group}, we give the following lemma.

\begin{lemma}\label{lm-LPeq}
Given any $x\in\mathbb{R}^d$ and $z_u\in X_u$, assume that $(q_n)_{n\le 0}\subset \mathbb{R}^d$ is a sequence such that $\pi_u q_0=z_u-x_u$ and
$(q_n)_{n\le 0}\in \mathfrak{U}_{\varrho}$. Then, $q_n=F^n(x+q_0)- F^n(x)$ for all $n\le 0$
if and only if $(q_n)_{n\le 0}$ satisfies the Lyapunov-Perron equation along the center manifold, i.e.,   %(similarly to  \cite{CHT-JDE})
\begin{align}\label{LP-eqs}
q_n  &= \mathcal{A}(n,0;x_c) (z_u-x_u)
\nonumber\\
&\quad+\sum_{k=n}^{-1} \mathcal{A}(n,k+1;x_c)\pi_u \{f_{g^k(x_c)}(F^k(x)+q_k)-f_{g^k(x_c)}(F^k(x)) \}
\nonumber\\
&\quad-\sum_{k=-\infty}^{n-1}  \mathcal{A}(n,k+1;x_c) \pi_{cs} \{f_{g^k(x_c)}(F^k(x)+q_k)-f_{g^k(x_c)}(F^k(x)) \}
%\quad \forall n\le 0,
\end{align}
for all $n\le 0$,
where $f_{g^k(x_c)}(y):=f(y)-Df(g^k(x_c))y$ for all $y\in\mathbb{R}^d$ and the term
$
\sum_{k=0}^{-1}
$
is defined to be $0$.
\end{lemma}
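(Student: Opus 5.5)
The proof is the usual variation-of-constants argument. It relies on two facts: the cocycle $\mathcal{A}$ has been block-diagonalized with projections independent of $x_c$ (see \eqref{block-diag}), and the estimates \eqref{ED}--\eqref{g-center} hold.

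\textbf{Step 1: rewrite $F$ around the center orbit.} Since $A(y_c)=DF(y_c)=A+Df(y_c)$, we have for every $y_c\in X_c$ and $y\in\mathbb{R}^d$
$$F(y)=A(y_c)y+f_{y_c}(y),\qquad f_{y_c}(y):=f(y)-Df(y_c)y .$$
I take $y_c=g^k(x_c)$, which is a legitimate base point even though $F^k(x)$ need not lie on $X_c$. Put
$$\Delta_k(\mathbf q):=f_{g^k(x_c)}(F^k(x)+q_k)-f_{g^k(x_c)}(F^k(x)).$$
Then $q_n:=F^n(x+q_0)-F^n(x)$ satisfies the recursion
\begin{equation*}
q_{k+1}=A(g^k(x_c))q_k+\Delta_k(\mathbf q),\qquad k\le -1 .
\end{equation*}
Conversely, if a sequence satisfies this recursion, then $F^{k}(x)+q_{k}=F^{-1}\bigl(F^{k+1}(x)+q_{k+1}\bigr)$, because $F$ is a diffeomorphism. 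A downward induction from $k=0$ then gives $q_n=F^n(x+q_0)-F^n(x)$ for all $n\le0$. So the lemma reduces to showing that, within $\mathfrak U_\varrho$ and with $\pi_uq_0=z_u-x_u$, the recursion is equivalent to \eqref{LP-eqs}.

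\textbf{Step 2: recursion implies \eqref{LP-eqs}.} By \eqref{block-diag}, $A(g^k(x_c))$ commutes with $\pi_s,\pi_c,\pi_u$, so the recursion splits into its $u$- and $cs$-components.

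For the $u$-component, iterate forward from $n$ to $0$. Using $\pi_uq_0=z_u-x_u$ and $\mathcal{A}(n,0;x_c)\pi_u=\pi_u\mathcal{A}(n,0;x_c)$, and solving for $\pi_uq_n$, we get the finite sum $\sum_{k=n}^{-1}\mathcal{A}(n,k+1;x_c)\pi_u\Delta_k$. The sign convention is that of \eqref{LP-eqs}.

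For the $cs$-component, iterate from $m<n$ up to $n$:
$$\pi_{cs}q_n=\mathcal{A}(n,m;x_c)\pi_{cs}q_m+\sum_{k=m}^{n-1}\mathcal{A}(n,k+1;x_c)\pi_{cs}\Delta_k ,$$
and then let $m\to-\infty$. By \eqref{S-ED} and \eqref{g-center} with $m\le n$,
$$\|\mathcal{A}(n,m;x_c)\pi_{cs}q_m\|\le K\bigl((\lambda_s^-)^{n-m}+(1-\varsigma)^{n-m}\bigr)\varrho^{m}\|\mathbf q\|_\varrho .$$
This tends to $0$ as $m\to-\infty$, because $\varrho>1+\varsigma>1-\varsigma>\lambda_s^-$. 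The same bound shows that the infinite series converges absolutely: by \eqref{Df-hold}, $f_{g^k(x_c)}$ is Lipschitz with constant at most $2\delta_f$, so $\|\Delta_k\|\le2\delta_f\|q_k\|\le 2\delta_f\varrho^k\|\mathbf q\|_\varrho$.

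Adding the two components gives \eqref{LP-eqs}.

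\textbf{Step 3: \eqref{LP-eqs} implies the recursion.} Let $(q_n)\in\mathfrak U_\varrho$ satisfy \eqref{LP-eqs}. First, the case $n=0$ of \eqref{LP-eqs} is consistent with $\pi_uq_0=z_u-x_u$. Next, apply $A(g^n(x_c))$ to the expression for $q_n$, using the cocycle identity $A(g^n(x_c))\mathcal{A}(n,j;x_c)=\mathcal{A}(n+1,j;x_c)$, and compare with the formula for $q_{n+1}$:
\begin{itemize}
\item in the $u$-sum, the index $k=n$ is present for $q_n$ but absent for $q_{n+1}$;
\item in the $cs$-sum, the index $k=n$ is absent for $q_n$ but present for $q_{n+1}$.
\end{itemize}
Each discrepancy contributes $\pi_u\Delta_n$, respectively $\pi_{cs}\Delta_n$, so together they give exactly $q_{n+1}=A(g^n(x_c))q_n+\Delta_n$. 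Convergence of the infinite series was established in Step 2, so this manipulation is justified. Step 1 then finishes the proof.

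\textbf{Main obstacle.} The only delicate point is the limit $m\to-\infty$ in the center direction. The center part of the cocycle can grow backward like $(1-\varsigma)^{n-m}$, and it is precisely the choice $\varrho>1+\varsigma$ in \eqref{va-1-u} that makes both the boundary term and the tail of the series vanish. I will also check carefully that the signs and index ranges of the two sums match \eqref{LP-eqs} as written.
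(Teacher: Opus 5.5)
Your strategy is the same as the paper's: write $F(y)=A(g^k(x_c))y+f_{g^k(x_c)}(y)$ along the center orbit, apply variation of constants, split with the $x_c$-independent projections, send the base time to $-\infty$ in the $cs$-component, and check the recursion for the converse. The problem is the sign check you deferred. It fails, so your assertions that the $u$-component has ``the sign convention of'' \eqref{LP-eqs}, that ``adding the two components gives'' \eqref{LP-eqs}, and that Step 3 yields $+\Delta_n$ are all false for the equation as printed.

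Iterating $q_{k+1}=A(g^k(x_c))q_k+\Delta_k$ from $n$ up to $0$ gives $q_0=\mathcal{A}(0,n;x_c)q_n+\sum_{k=n}^{-1}\mathcal{A}(0,k+1;x_c)\Delta_k$. Solving for $q_n$ therefore produces $-\sum_{k=n}^{-1}\mathcal{A}(n,k+1;x_c)\pi_u\Delta_k$. Your own (correct) $cs$ formula produces $+\sum_{k=-\infty}^{n-1}\mathcal{A}(n,k+1;x_c)\pi_{cs}\Delta_k$. So the orbit differences satisfy
\[
q_n=\mathcal{A}(n,0;x_c)(z_u-x_u)-\sum_{k=n}^{-1}\mathcal{A}(n,k+1;x_c)\pi_u\Delta_k+\sum_{k=-\infty}^{n-1}\mathcal{A}(n,k+1;x_c)\pi_{cs}\Delta_k ,
\]
which is \eqref{LP-eqs} with both sums reversed in sign. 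Consistently, if you run your Step 3 on the printed equation, the $k=n$ term dropped from the $u$-sum and the $k=n$ term added to the $cs$-sum both enter with a minus sign. You then get $q_{n+1}=A(g^n(x_c))q_n-\Delta_n$, which is not the orbit recursion. As literally stated, the equivalence fails whenever the nonlinear terms do not vanish; what your argument proves is the corrected equation above.

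The paper's proof has the same slip. Its displayed variation-of-constants formula carries $+\sum_{k=n}^{-1}$, which propagates to the minus sign in \eqref{qq00}, and its converse asserts $+\Delta_n$. The proof of Proposition~\ref{Fol-thm} only uses norms of the two sums, so nothing downstream is affected. Still, you should state and prove the corrected version rather than claim agreement with the printed one.

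Second, your bound on the boundary term uses the wrong half of the trichotomy. For $m\le n$, $\mathcal{A}(n,m;x_c)$ propagates forward, so the first lines of \eqref{ED} and \eqref{g-center} give
\[
\|\mathcal{A}(n,m;x_c)\pi_{cs}q_m\|\le K\bigl((\lambda_s^+)^{n-m}+(1+\varsigma)^{n-m}\bigr)\varrho^{m}\|\mathbf{q}\|_\varrho\le 2K(1+\varsigma)^{n}\Bigl(\frac{\varrho}{1+\varsigma}\Bigr)^{m}\|\mathbf{q}\|_\varrho .
\]
The factors $(\lambda_s^-)^{n-m}$ and $(1-\varsigma)^{n-m}$ you wrote come from the backward estimates, \eqref{S-ED} and the second line of \eqref{g-center}. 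Those hold only when the first time argument does not exceed the second, and applied here they would falsely assert forward contraction in the center direction. With the correct bound, the boundary term tends to $0$ and the tail series converges precisely because $\varrho>1+\varsigma$. Your last paragraph correctly names this as the crux, but your displayed inequality only ever needs $\varrho>1-\varsigma$. The paper uses the right estimate, bounding $\mathcal{A}(0,n;x_c)\pi_{cs}q_n$ by $K\{(1+\varsigma)/\varrho\}^{-n}\|\mathbf{q}\|_\varrho$.

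With these two corrections your proof is complete and coincides with the paper's argument, with the paper's sign slip also corrected.
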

We will prove this lemma after the completion of this proof.
Subsequently, we claim that if equation \eqref{LP-eqs} has a unique solution $(q_n)_{n\le 0}\in \mathfrak{U}_{\varrho}$, then
\begin{align}\label{hh-qq}
h_u(x,z_u):=x_{cs}+\pi_{cs}q_0
\end{align}
defines the unique unstable foliation of $F$ via \eqref{unst-foli}. In fact, $(q_n)_{n\le 0}\in \mathfrak{U}_{\varrho}$ is a solution of \eqref{LP-eqs} means that $\pi_u q_0=z_u-x_u$ and, therefore,
\begin{align}\label{zuhu}
z_u+h_u(x,z_u)=\pi_uq_0+x_u+x_{cs}+\pi_{cs}q_0=x+q_0
\in \mathcal{W}_u(x)
\end{align}
by \eqref{SF-2} because Lemma \ref{lm-LPeq} implies that
$
(F^n(x+q_0)- F^n(x))_{n\le 0}=(q_n)_{n\le 0}\in \mathfrak{U}_{\varrho}.
$
On the other hand, for a point $z\in \mathcal{W}_u(x)$ such that $\pi_u z=z_u$, we define
$$
\tilde q_n=F^n(z)- F^n(x),\quad \forall n\le 0,
$$
which implies that $(\tilde q_n)_{n\le 0}\in \mathfrak{U}_{\varrho}$ by \eqref{SF-2} and $\pi_u\tilde q_0=z_u-x_u$. Then, Lemma \ref{lm-LPeq} indicates that $(\tilde q_n)_{n\le 0}$ is the unique solution of equation \eqref{LP-eqs} in $\mathfrak{U}_{\varrho}$, i.e., $\tilde q_n=q_n$. It means that
$$
\pi_{cs}z=x_{cs}+\pi_{cs}\tilde q_0=x_{cs}+\pi_{cs}q_0=h_u(x,z_u),
$$
i.e., the point $z$ has the unique form $z_u+h_u(x,z_u)$. Combining this fact with \eqref{zuhu}, we prove the claimed result.

According to the above discussion, in what follows, we need to prove the uniqueness of the solution $\{q_n\}_{n\le 0}$ of equation \eqref{LP-eqs}, and show its regularity with respect to $x$ and $z_u$. For the purpose, recalling that $f_{g^k(x_c)}(y)=f(y)-Df(g^k(x_c))y$ for all $y\in\mathbb{R}^d$, we obtain from
\eqref{Df-hold} that, for all  $n\le 0$,  $Df_{g^n(x_c)}(g^n(x_c))=0$ and
\begin{align}\label{Df-hold2}
\begin{split}
&   \|Df_{g^n(x_c)}(y)\|=\|Df(y)-Df(g^n(x_c))\|\le 2\delta_f,
\\
&\|Df_{g^n(x_c)}(y)-Df_{g^n(x_c)}(\tilde{y})\|=\|Df(y)-Df(\tilde{y})\|\le M\|y-\tilde{y}\|^\alpha
\end{split}
\end{align}
for all $y,\tilde{y}\in \mathbb{R}^d$.
Then, similarly to \cite[Theorems 2.1 and 2.2]{CHT-JDE}, we can prove that
equation \eqref{LP-eqs} has a unique solution $(q_n)_{n\le 0}\in \mathfrak{U}_{\varrho}$, which enables us to regard all $q_n$'s as functions of $(x,z_u)$, and that all $q_n: \mathbb{R}^d\times X_u \to \mathbb{R}^d$ are continuous such that
$$
q_n(x, x_u)=0, \quad \forall n\le 0,
$$
 and $q_n(x,\cdot):X_u \to \mathbb{R}^d$ are $C^{1,\alpha}$ with globally bounded derivatives.

Next, in order to prove \eqref{hh-u}, it suffices to study the regularity of $q_0(x,z_u)$
due to \eqref{hh-qq}. Then, we claim that
\begin{align}\label{Diff-LP}
\sup_{(x,z_u)\in \mathbb{R}^d\backslash \{x_c\}} \frac{\|q_0(x,z_u)-(z_u-x_u) \|}{\|(x_{su},z_u)\|^{1+\beta}}\le L.
\end{align}
In fact, given a constant $\sigma \ge 0$, by \eqref{ED}, \eqref{g-center} and \eqref{Df-hold2},  we see from \eqref{LP-eqs} that
%\begin{scriptsize}
\begin{align}\label{Dif-Fol}
& \sup_{n\le 0}  \frac{\varrho^{-n}\|q_n(x,z_u)-\mathcal{A}(n,0;x_c)(z_u-x_u)\|}{\|(x_{su},z_u)\|^{1+\sigma}}
\nonumber\\
&\le \frac{1}{\|(x_{su},z_u)\|^{1+\sigma}}\sup_{n\le 0} \Bigg\{ \sum_{k=n}^{-1} K (\lambda_u^-/\varrho)^{n-k-1}
\varrho^{-k-1}
           \big\|   f_{g^k(x_c)} (F^k(x)+q_k(x,z_u))- f_{g^k(x_c)} (F^k(x))   \big\|
\nonumber\\
&\quad +\sum_{k=-\infty}^{n-1} K ((1+\varsigma)/\varrho)^{n-k-1}
\varrho^{-k-1} \big\| f_{g^k(x_c)} (F^k(x)+q_k(x,z_u))- f_{g^k(x_c)} (F^k(x))  \big\| \Bigg\}
\nonumber\\
&  \le \frac{C}{\|(x_{su},z_u)\|^{1+\sigma}}
\sup_{k\le 0} \bigg\{\sup_{t\in[0,1]} \varrho^{-k} \| Df_{g^k(x_c)}(F^k(x)+t q_k(x,z_u)) \|\,\|q_k(x,z_u)\|\bigg\}
\nonumber\\
&  =C\sup_{k\le 0} \Bigg\{\sup_{t\in[0,1]} \varrho^{-k} \big\| Df_{g^k(x_c)}(F^k(x)+t q_k(x,z_u)) \big\|
\nonumber\\
&\quad
\cdot\bigg( \frac{\|q_k(x,z_u)\!-\!\mathcal{A}(k,0;x_c)(z_u\!-\!x_u)\|}{\|(x_{su},z_u)\|^{1+\sigma}}
  \!+\! \frac{\|\mathcal{A}(n,0;x_c)(z_u\!-\!x_u)\|}{\|(x_{su},z_u)\|^{1+\sigma}} \bigg)\! \Bigg\}
\nonumber\\
&  \le 2\delta_f C \sup_{k\le 0}   \frac{\varrho^{-k}\|q_k(x,z_u)-\mathcal{A}(k,0;x_c)(z_u-x_u)\|}{\|(x_{su},z_u)\|^{1+\sigma}}
\nonumber\\
& \quad+C \sup_{k\le 0}  \frac{(\lambda_u^-/\varrho)^k\sup_{t\in[0,1]} \big\| Df_{g^k(x_c)}(F^n(x)+t q_n(x,z_u)) \big\| }{\|(x_{su},z_u)\|^{\sigma}}
\end{align}
%\end{scriptsize}
for all $(x,z_u)\in \mathbb{R}^d\backslash\{x_c\}$ (i.e., $\|(x_{su},z_u)\|\ne 0$).
Then, regarding  the case of $\sigma=0$, \eqref{Dif-Fol} gives
\begin{align*}
\sup_{n\le 0} \frac{\varrho^{-n}\|q_n(x,z_u)-\mathcal{A} (n,0;x_c)(z_u-x_u)\|}{\|(x_{su},z_u)\|}
\le \frac{2\delta_fC}{1-2\delta_f C},
\end{align*}
and thus,
\begin{align}\label{Dif-Fo2}
\|q_n(x,z_u)\| \le&~  L\varrho^n\|(x_{su},z_u)\|  +   \|\mathcal{A}(n,0;x_c) (z_u-x_u)\|
\nonumber\\
\le&~ L \varrho^n    \|(x_{su},z_u)\|+ K  (\lambda_u^-)^n \|(z_u-x_u)\|
\le 2L \varrho^n  \|(x_{su},z_u)\|
\end{align}
for all $n\le 0$.

Consider the case that $\sigma =\beta \in (0,\{\log(1+\varsigma)-\log\lambda_u^-\}/\log\lambda_s^-)$.  By \eqref{Dif-Fol} we get
\begin{align}\label{Dif-Fo3}
& \sup_{n\le 0}  \frac{\varrho^{-n}\|q_n(x,z_u)-\mathcal{A}(n,0;x_c)(z_u-x_u)\|}{\|(x_{su},z_u)\|^{1+\beta}}
\nonumber\\
&\le L \sup_{n\le 0}  \frac{(\lambda_u^-/\varrho)^n \sup_{t\in[0,1]} \big\| Df_{g^n(x_c)}(F^n(x)+t q_n) \big\|^{(1-\beta/\alpha)+\beta/\alpha} }{\|(x_{su},z_u)\|^{\beta}}
\end{align}
for all $n\le 0$.
Note that, similarly to \eqref{S-ED}, we have
\begin{align}\label{Dif-Fo5}
\|F^n(x)-g^n(x_c)\|
&= \|F^n(x)-F^n(x_c)\|
\le \sup_{\xi\in \mathbb{R}^d} \|DF^n(\xi)\| \|x_{su}\|
\nonumber\\
&\le (\lambda_s^-)^{n} \|x_{su}\|,\quad \forall n\le 0.
\end{align}
Then, by \eqref{Df-hold2}, \eqref{Dif-Fo2} %\eqref{Dif-Fo4}
and \eqref{Dif-Fo5}, we see from \eqref{Dif-Fo3} that
\begin{align}\label{qq-AA}
& \sup_{n\le 0}  \frac{\varrho^{-n}\|q_n(x,z_u)-\mathcal{A}(n,0;x_c)(z_u-x_u)\|}{\|(x_{su},z_u)\|^{1+\beta}}
\nonumber\\
&\le 2\delta_f^{1-\beta/\alpha} L \sup_{n\le 0}\sup_{t\in[0,1]}\Bigg\{(\lambda_u^-/\varrho)^n
\frac{ \big\| Df_{g^n(x_c)}(F^n(x)+t q_n(x,z_u))-Df_{g^n(x_c)}(g^n(x_c)) \big\|^{\beta/\alpha} }{\big\|F^n(x)+tq_n(x,z_u)- g^n(x_c)\big\|^{\beta}}
\nonumber\\
&\quad \cdot
 \frac{\big\|F^n(x)-g^n(x_c)+tq_n(x,z_u) \big\|^{\beta}}{\|(x_{su},z_u)\|^{\beta}}\Bigg\}
\nonumber\\
& \le 2\delta_f^{1-\beta/\alpha} L M^{\beta/\alpha}
 \sup_{n\le 0}\bigg\{(\lambda_u^-/\varrho)^n  \bigg(\frac{\|F^n(x)-g^n(x_c)\|+\|q_n(x,z_u)\|}{\|(x_{su},z_u)\|}\bigg)^{\beta}\bigg\}
\nonumber\\
& \le 2\delta_f^{1-\beta/\alpha} L M^{\beta/\alpha} \sup_{n\le 0}\{(\lambda_u^-/\varrho)^n
( (\lambda_s^-)^n + 2L\varrho^n )^{\beta}\}
\nonumber\\
&\le L \sup_{n\le 0}\{(\lambda_u^-(\lambda_s^-)^\beta/\varrho)^n\} \le L, \quad \forall n\le 0,
\end{align}
where $ \lambda_u^-(\lambda_s^-)^\beta/\varrho >1$
due to \eqref{va-1-u}. Hence, for any $(x_{cs},z_u)\in \mathbb{R}^d\backslash\{x_c\}$ (i.e., $\|(x_{su},z_u)\|\ne 0$),
\begin{align*}
\frac{\|q_0(x,z_u)-  (z_u-x_u)\|}{\|(x_{su},z_u)\|^{1+\beta}}
 \le \sup_{n\le 0}  \frac{\varrho^{-n}\|q_n(x,z_u)-\mathcal{A}(n,0;x_c)(z_u-x_u)\|}{\|(x_{su},z_u)\|^{1+\beta}} \le L,
\end{align*}
which proves the claimed result \eqref{Diff-LP}.

Finally, we obtain from \eqref{hh-qq} and \eqref{Diff-LP}   that
\begin{align}\label{Dec-9}
\frac{\|h_u(x,z_u)-x_{cs}\|}{\|(x_{su},z_u)\|^{1+\beta}}
&=\frac{\|q_0(x,z_u)- \pi_u q_0(x,z_u)\|}{\|(x_{su},z_u)\|^{1+\beta}}
\nonumber\\
&=\frac{\|q_0(x,z_u)-(z_u-x_u)\|}{\|(x_{su},z_u)\|^{1+\beta}}
\le L.
\end{align}
The proof is completed.
\end{proof}

Remark that in the above proof, we establish the Lyapunov-Perron equation \eqref{LP-eqs} along the center manifold because we need the nonlinearity $f_{g^n(x_c)}$ in \eqref{LP-eqs} to satisfy that $Df_{g^n(x_c)}(g^n(x_c))=0$ (see \eqref{Df-hold2}). This fact enables us to use \eqref{Dif-Fo5} to obtain the estimate
$$
\|F^n(x)-g^n(x_c)\|/\|(x_{su},z_u)\|\le  (\lambda_s^-)^{n}
$$
in the third inequality of \eqref{qq-AA}.
Otherwise, if we use the nonlinearity $f$ from the classical Lyapunov-Perron equation, then the fact $Df(0)=0$ implies that the term $\|F^n(x)-g^n(x_c)\|$ in the second inequality of \eqref{qq-AA} should be replaced with $\|F^n(x)\|$ and, consequently, \eqref{Dif-Fo5} should be replaced with
\begin{align*}
\|F^n(x)\|
\le \sup_{\xi\in\mathbb{R}^d} \|DF^n(\xi)\| \|x\| \le  (\lambda_s^-)^{n} \|x\|,\quad \forall n\le 0.
\end{align*}
This gives rise to a problem that the estimate $\|F^n(x)\|/\|(x_{su},z_u)\|\le  (\lambda_s^-)^{n}$ no longer holds.

Now, we complementally prove Lemma \ref{lm-LPeq}.

\begin{proof}[Proof of Lemma \ref{lm-LPeq}]
Recalling the definition of $f_{g^n(x_c)}$ given in the formulation of Lemma \ref{lm-LPeq}, we have
\begin{align}\label{F-x*}
F(y)=DF(g^n(x_c))y+f_{g^n(x_c)}(y)
\end{align}
for all $n\le 0$ and all $y\in\mathbb{R}^d$.
Then, when $q_n=F^n(x+q_0)- F^n(x)$, we see from \eqref{F-x*} that
\begin{align}\label{qqFF}
q_{n+1}&=F^{n+1}(x+q_0)-F^{n+1}(x)=F(F^n(x+q_0))-F(F^n(x))
\nonumber\\
&=DF(g^n(x_c))q_n+f_{g^n(x_c)}(F^{n}(x)+q_n)
-f_{g^n(x_c)}(F^{n}(x)).
\end{align}
This enables us to apply the discrete Variation of Constant Formula (see e.g. \cite[Theorem 3.17]{Elaydi-book}) to obtain that
\begin{align*}
q_n
=\mathcal{A}(n,0;x_c)q_0
+
  \sum_{k=n}^{-1} \mathcal{A}(n,k+1;x_c)  \{f_{g^k(x_c)}(F^k(x)+q_k)-f_{g^k(x_c)}(F^k(x)) \}
\end{align*}
for all $n\le 0$.
Projecting this equality onto $X_u$ and $X_{cs}$, we obtain that
\begin{align}\label{LP-qqq}
\pi_u q_n &= \mathcal{A}(n,0;x_c) (z_u-x_u)
\nonumber\\
&\quad+\sum_{k=n}^{-1} \mathcal{A}(n,k+1;x_c)\pi_u \{f_{g^k(x_c)}(F^k(x)+q_k)-f_{g^k(x_c)}(F^k(x)) \}
\end{align}
and
\begin{align}\label{Lppp}
\pi_{cs}q_n &=\mathcal{A}(n,0;x_c) \pi_{cs} q_0
\nonumber\\
&\quad+\sum_{k=n}^{-1} \mathcal{A}(n,k+1;x_c)\pi_{cs} \{f_{g^k(x_c)}(F^k(x)+q_k)-f_{g^k(x_c)}(F^k(x)) \},
\end{align}
the second of which is equivalent to
\begin{align}\label{Lppp2}
\pi_{cs} q_0 &=\mathcal{A}(0,n;x_c) \pi_{cs} q_n
\notag \\
&\quad-\sum_{k=n}^{-1} \mathcal{A}(0,k+1;x_c) \pi_{cs}  \{f_{g^k(x_c)}(F^k(x)+q_k)-f_{g^k(x_c)}(F^k(x)) \}.
\end{align}

Since $\mathbf{q}:=(q_n)_{n\le 0}\in\mathfrak{U}_\varrho$, we conclude from \eqref{ED} and \eqref{g-center} that
$$
\|\mathcal{A}(0,n;x_c)\pi_{cs} q_n\| \le K \{(1+\varsigma)/\varrho\}^{-n} \|\mathbf{q}\|_{\varrho},\quad \forall n\le 0,
$$
and from \eqref{Df-hold2} that
\begin{align}\label{uni-con}
&\sum_{k=-\infty}^{-1}  \| \mathcal{A}(0,k+1;x_c) \pi_{cs} \{f_{g^k(x_c)}(F^k(x)+q_k)-f_{g^k(x_c)}(F^k(x))\} \|
\nonumber\\
&\le \sum_{k=-\infty}^{-1} K (1+\varsigma)^{-(k+1)} 2\delta_f \varrho^k \|\mathbf{q}\|_{\varrho}
\le  \delta_f C \, \|\mathbf{q}\|_{\varrho},\quad \forall n\le 0,
\end{align}
as $\varrho>1+\varsigma$.
Therefore, letting $n\to -\infty$ in \eqref{Lppp2}, we have
\begin{align}\label{qq00}
\pi_{cs}q_0 =-\sum_{k=-\infty}^{-1} \mathcal{A}(0,k+1;x_c)
\pi_{cs}
\{f_{g^k(x_c)}(F^k(x)+q_k)-f_{g^k(x_c)}(F^k(x))\},
\end{align}
which together with \eqref{Lppp} yields
\begin{align*}
\pi_{cs} q_n =-\sum_{k=-\infty}^{n-1}  \mathcal{A}(n,k+1;x_c)
\pi_{cs}
\{f_{g^k(x_c)}(F^k(x)+q_k)-f_{g^k(x_c)}(F^k(x))\}.
\end{align*}
Combining this equality with \eqref{LP-qqq}, we obtain that \eqref{LP-eqs} holds for every $n\le 0$.

Conversely, if $(q_n)_{n\le 0}\in\mathfrak{U}_\varrho$ satisfying \eqref{LP-eqs}, which is uniformly convergent by \eqref{uni-con}, then we get
\begin{align*}
q_{n+1}-DF(g^{n}(x_c))q_{n}
&=q_{n+1}-A(g^{n}(x_c))q_{n}
\\
&=
f_{g^n(x_c)}(F^n(x)+q_n)-f_{g^n(x_c)}(F^n(x)).
\end{align*}
It implies that $q_n=F^n(x+q_0)- F^n(x)$ for all $n\le 0$ due to the second equality of \eqref{qqFF}. The proof is completed.
\end{proof}

We complementarily prove Lemma \ref{IF}.

\begin{proof}[Proof of Lemma {\rm \ref{IF}.}]
Let $\mathfrak{U}_{\tilde{\varrho}}$ consist of all sequence ${\bf p}:=(p_n)_{n\ge 0}\subset X_{cs}$ such that $\sup_{n\ge 0}\tilde{\varrho}^{-n}\|p_n\|<\infty$
for a constant $\tilde{\varrho}\in(\lambda_s^+,1-\varsigma)$.
Then, $\mathfrak{U}_{\tilde{\varrho}}$ is a Banach space equipped with norm
$\|\mathbf{p}\|_{\tilde{\varrho}}:=\sup_{n\ge 0}\tilde{\varrho}^{-n} \|p_n\|$.
Similarly to Lemma \ref{lm-LPeq}, one can obtain the following fact:
\begin{itemize}
\item Suppose that $(p_n)_{n\ge 0}\subset X_{cs}$ is a sequence such that $\pi_s p_0=z_s-x_s$ and $(p_n)_{n\ge 0}\in \mathfrak{U}_{\tilde{\varrho}}$.
Then
$p_n=g^n(x_{cs}+p_0)-g^n(x_{cs})$ for all $n\ge 0$ if and only if $(p_n)_{n\ge 0}$ satisfies the Lyapunov-Perron equation along the center manifold, i.e.,
\begin{align}\label{LP-cs}
p_n &=\mathcal{A}_{cs}(n,0;x_c)(z_s-x_s)
\notag \\
&\quad +\sum_{k=0}^{n-1} \mathcal{A}_{cs}(n,k+1;x_c)
\pi_s \{ \tilde f_{g^k(x_{c})}(g^k(x_{cs})+p_k)- \tilde f_{g^k(x_{c})}(g^k(x_{cs})) \}
\notag \\
&\quad - \sum_{k=n}^\infty \mathcal{A}_{cs}(n,k+1;x_c)
 \pi_c \{  \tilde f_{g^k(x_{c})}(g^k(x_{cs})+p_k)- \tilde f_{g^k(x_{c})}(g^k(x_{cs}))\},  
\end{align}
 where $\mathcal{A}_{cs}(n,0;x_c):=\pi_{cs} \mathcal{A}(n,0;x_c)$,
$ \tilde f_{g^k(x_{c})}(y_{cs}):=g(y_{cs})-\pi_{cs} Df(g^k(x_{c}))y_{cs}$ for all $y_{cs}\in X_{cs}$.
\end{itemize}
Since equation \eqref{LP-cs} is merely a transformation of the classical Lyapunov-Perron equation  for the stable foliation (not constructed along the center manifold, see \cite[(3.2)]{ZZJ-MA}), the unique solution $(p_n)_{n\ge 0}\in \mathfrak{U}_{\tilde{\varrho}}$ of \eqref{LP-cs} is also the unique solution of the classical Lyapunov-Perron equation. Thus, we can use \cite[Lemmas 2-3]{ZZJ-MA} to show that $C^{1,1}$-smoothness of $g$ plus bunching condition $\lambda_s^+(1+\varsigma)<1-\varsigma$ guarantees the
$C^{1,\beta}$-smoothness of $p_n$,
and then a non-essential modification of its proof further shows that $C^{1,\alpha}$-smoothness plus the inequality
 $(\lambda_s^+)^\alpha(1+\varsigma)<1-\varsigma$ also ensures the
$C^{1,\beta}$-smoothness of $p_n$.

On the other hand, adopting the same strategy as the proof of Proposition \ref{Fol-thm},
we can prove that equation \eqref{LP-cs} has a unique solution $(p_n(x_{cs},z_s))_{n\ge 0}\in \mathfrak{U}_{\tilde{\varrho}}$ such that
\begin{align}\label{p-reg}
p_n(x_{cs},x_s)=0,\quad
\sup_{n\ge0}\tilde{\varrho}^{-n}\frac{\|p_n(x_{cs},z_s)-\mathcal{A}_{cs}(n,0;x_c)(z_s-x_s)\|}{\|(x_s,z_s)\|^{1+\beta}}\le L
\end{align}
with a small $\beta>0$ when $\|(x_s,z_s)\|\ne 0$.
Moreover,
the stable foliation of $g$ is defined by
\begin{align}\label{stab-fo}
h_s(x_{cs},z_s):=x_c+\pi_c p_0(x_{cs},z_s).
\end{align}
Therefore, it follows from \eqref{p-reg}  and \eqref{stab-fo}    that
\begin{align}\label{hs-1}
h_s(x_{cs},x_s)=x_c+\pi_c p_0(x_{cs},x_s)=x_c
\end{align}
and
\begin{align}\label{hs-2}
\frac{\|h_s(x_{cs},z_s)-x_c\|}{\|(x_s,z_s)\|^{1+\beta}}
&=\frac{\|p_0(x_{cs},z_s)-\pi_s p_0(x_{cs},z_s)\|}{\|(x_s,z_s)\|^{1+\beta}}
= \frac{\|p_0(x_{cs},z_s)-(z_s-x_s)\|}{\|(x_s,z_s)\|^{1+\beta}}
\notag \\
&\le\sup_{n\ge0}\tilde{\varrho}^{-n}\frac{\|p_n(x_{cs},z_s)-\mathcal{A}_{cs}(n,0;x_c)(z_s-x_s)\|}{\|(x_s,z_s)\|^{1+\beta}}\le L
\end{align}
when $\|(x_s,z_s)\|\ne 0$.
Then, for any $\tilde{x}_c\in X_c$, we see from \eqref{hs-1} and \eqref{hs-2} that $h_s(\tilde{x}_c,0)=\tilde{x}_c$ and
\begin{align*}
&\frac{\|h_s(x_{cs},z_s)-h_s(\tilde{x}_c,0)- \Pi_{c}\{(x_{cs},z_s)-(\tilde{x}_c,0)\}\|}{\|(x_{cs},z_s)-(\tilde{x}_c,0)\|}
\notag \\
&= \frac{\|h_s(x_{cs},z_s)-x_c\|}{\|(x_{cs}-\tilde{x}_c,z_s)\|}
%\le   \frac{L\|(x_s,z_s)\|^{1+\beta}}{\|(x_{cs}-\tilde{x}_c,z_s)\|}
\le \frac{L\|(x_s,z_s)\|^{1+\beta}}{\|(x_s,z_s)\|}
= L\|(x_s,z_s)\|^{\beta} \to 0,
\end{align*}
as $ (x_{cs},z_s)\to (\tilde{x}_c,0)$,
since $\|x_s\|\le \max\{\|x_s\|,\|x_c-\tilde x_c\|\}=\|x_{cs}-\tilde{x}_c\|$.
It follows that  $h_s:X_{cs}\times X_s\to X_c$ is differentiable at $(\tilde{x}_c,0)$ with $Dh_s(\tilde{x}_{c},0)=\Pi_c$.
This completes the proof.
\end{proof}

\section{Proof of main result}\label{sec-diff}

In order to prove Theorem \ref{Mai-thm}, in this section, we first introduce a semi-decoupling method by only straightening up the unstable foliation to obtain a new system, which is an expansive fiber-preserving mapping. For the purpose, we define a continuous mapping $H:\mathbb{R}^d \to \mathbb{R}^d$ by
\begin{align}\label{Dec-1}
H(x):=x_u+ h_u(x_{cs}, x_u), \quad \forall x=(x_{cs},x_u)\in\mathbb{R}^d,
\end{align}
and give the following lemma.
\begin{lemma}\label{dec-lem}
Suppose that all conditions of Proposition {\rm \ref{Fol-thm}} hold. Then, the inverse of $H$ exists such that
\begin{align}\label{Dec-2}
H^{-1}(x)=x_u+h_u(x,0), \quad \forall x\in\mathbb{R}^d,
\end{align}
both of which satisfy that $H^{\pm 1}(x_{cs})=x_{cs}$ and
\begin{align}\label{Dif-3}
\|H^{\pm 1}(x)-x\|\le L\|x-\tilde{x}_c\|^{1+\beta},\quad \forall \tilde{x}_c\in X_c.
\end{align}
Moreover,
$
\hat{F}(x):=H^{-1} \circ F \circ H(x)
$
is a continuous mapping such that
\begin{align}\label{xu-F}
\begin{split}
&\pi_{cs} \hat{F}(x)=F(x_{cs}),\quad \|\partial_{x_u}(\pi_u\hat{F})(x)-A_u\|\le \delta,
\\
&\|\partial_{x_u}(\pi_u\hat{F})(x_{cs},x_u)-\partial_{x_u}(\pi_u\hat{F})(x_{cs},\tilde x_u)\|\le M\|x_u-\tilde x_u\|^\beta %\quad \forall x_{cs}\in X_{cs},~x_u,\tilde x_u\in X_u,
\end{split}
\end{align}
for all $x_{cs}\in X_{cs}$ and $x_u,\tilde x_u\in X_u$,
where $\delta>0$ is a small constant and $M>0$ is a constant.
\end{lemma}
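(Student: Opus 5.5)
We will work throughout with the unstable foliation of Proposition \ref{Fol-thm}. The map $H$ in \eqref{Dec-1} sends the point $(x_{cs},x_u)$ to the point of the leaf $\mathcal{W}_u(x_{cs})$ whose unstable coordinate is $x_u$. Since $h_u(x_{cs},0)=x_{cs}$ (take $x=x_{cs}$, $\pi_u x=0$ in Proposition \ref{Fol-thm}), this leaf passes through $x_{cs}$.

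\textbf{Step 1: $H$ is invertible and \eqref{Dec-2} holds.} The leaves partition $\mathbb{R}^d$: by \eqref{SF-1}, $z\in\mathcal{W}_u(x)$ is an equivalence relation. Each leaf is a graph over $X_u$. Hence each leaf meets $X_{cs}$ (where $z_u=0$) in exactly one point, namely $h_u(x,0)$. Define $G(x):=x_u+h_u(x,0)$. To see $G\circ H=id$, put $y=H(x)\in\mathcal{W}_u(x_{cs})$. Then $\mathcal{W}_u(y)=\mathcal{W}_u(x_{cs})$, so $h_u(y,0)=h_u(x_{cs},0)=x_{cs}$, and $\pi_u y=x_u$; thus $G(y)=x$. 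The identity $H\circ G=id$ follows in the same way. The relation $H^{\pm1}(x_{cs})=x_{cs}$ is immediate, and continuity comes from that of $h_u$.

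\textbf{Step 2: the estimate \eqref{Dif-3}.} Apply \eqref{hh-u} with the base point $x_{cs}$, whose $su$-part is $x_s$, and with $z_u=x_u$. This gives $\|H(x)-x\|=\|h_u(x_{cs},x_u)-x_{cs}\|\le L\|(x_s,x_u)\|^{1+\beta}$. Since $\|x_{su}\|\le\|x-\tilde x_c\|$ for every $\tilde x_c\in X_c$, \eqref{Dif-3} follows. For $H^{-1}$, apply \eqref{hh-u} with $z_u=0$: $\|h_u(x,0)-x_{cs}\|\le L\|x_{su}\|^{1+\beta}$.

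\textbf{Step 3: properties of $\hat F$.} First we check that $F$ preserves the leaves, $F(\mathcal{W}_u(x))=\mathcal{W}_u(F(x))$, which is clear from \eqref{SF-1}. We also need that $X_{cs}$ is invariant, which holds because $\mathcal{M}_{cs}=X_{cs}$ has been straightened by $\mathcal{G}$. Then $\hat F(x)=H^{-1}(F(H(x)))$. The point $F(H(x))$ lies on the leaf through $F(x_{cs})\in X_{cs}$, and $H^{-1}$ returns the $cs$-coordinate $h_u(\cdot,0)$, which is exactly $F(x_{cs})$. This gives $\pi_{cs}\hat F(x)=F(x_{cs})$. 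Next, $\pi_u\hat F(x)=\pi_uF(x_u+h_u(x_{cs},x_u))$, which is $C^{1,\alpha}$ in $x_u$ as a composition of $F$ with $x_u\mapsto x_u+h_u(x_{cs},x_u)$. For this map, Proposition \ref{Fol-thm} gives $C^{1,\alpha}$ smoothness in $z_u$ with a globally bounded derivative.

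\textbf{Main obstacle: the bounds in \eqref{xu-F}.} We need $\|\partial_{x_u}(\pi_u\hat F)-A_u\|\le\delta$ and a uniform Hölder constant. By the chain rule, $\partial_{x_u}(\pi_u\hat F)=\pi_uDF(\cdot)(id_u+\partial_{z_u}h_u)$, and $\pi_uDF$ is within $\delta_f$ of $(0,0,A_u)$. It therefore suffices to show that $\partial_{z_u}h_u$ is globally small and uniformly $\alpha$- (hence $\beta$-)Hölder. The plan is to differentiate the Lyapunov–Perron equation \eqref{LP-eqs} in $z_u$. The derivative sequence solves a linear fixed-point equation in $\mathfrak{U}_\varrho$ whose contraction factor is $O(\delta_f)$. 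Consequently $\|\partial_{z_u}q_0-\pi_u\|\le C\delta_f$, so $\|\partial_{z_u}h_u\|\le C\delta_f$. Uniform Hölder continuity follows by the standard argument of \cite{CHT-JDE}: estimate the difference of derivative sequences in a space with weight $\varrho'$, using \eqref{Df-hold2} and the gap $\varrho^{1+\alpha}<\lambda_u^-$ after possibly shrinking exponents. This yields a constant $M$ independent of $x_{cs}$.
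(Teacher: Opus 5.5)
Your proposal is correct and follows the paper's proof essentially step for step. It uses the same leaf-graph argument for $H^{-1}(x)=x_u+h_u(x,0)$, the same application of \eqref{hh-u} (with base point $x_{cs}$, resp.\ with $z_u=0$) together with $\|x_{su}\|\le\|x-\tilde x_c\|$, and the same leaf-invariance plus chain-rule argument for $\hat F$. The only difference is that you re-derive the uniform fiberwise $C^{1,\beta}$ control of $h_u$ from the Lyapunov--Perron equation, which the paper takes directly from Proposition \ref{Fol-thm}. Note also that for $\|\partial_{x_u}(\pi_u\hat F)-A_u\|\le\delta$ you only need $\partial_{z_u}h_u$ bounded, not small, since the off-diagonal block $\partial_{x_{cs}}(\pi_uF)=\partial_{x_{cs}}(\pi_uf)$ is already $O(\delta_f)$.
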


Remark that \eqref{Dif-3} means that $H^{\pm 1}$ are differentiable at $\tilde x_c\in \mathcal{M}_c$  with $DH^{\pm 1}(\tilde x_c)=id$ since it implies that
\begin{align}\label{DH0}
\frac{\|H^{\pm 1}(x)-H^{\pm 1}(\tilde x_c)-(x-\tilde x_c)\|}{\|x-\tilde x_c\|^{1+\beta}}=\frac{\|H^{\pm 1}(x)-x\|}{\|x-\tilde x_c\|^{1+\beta}}\le L
\end{align}
when $\|x-\tilde x_c\|\ne 0$.

\begin{proof}[Proof of Lemma {\rm \ref{dec-lem}.}]
In order to show that
$
H^{-1}\circ H=H \circ H^{-1}=id,
$
we note from \eqref{Dec-1} that
$\pi_uH(x)=x_u$ and $H(x)\in \mathcal{W}_u(x_{cs})$, implying that $H(x)$ and $x_{cs}$ belong to the same leaf, i.e.,
\begin{align}\label{huhu}
h_u(H(x),z_u)=h_u(x_{cs},z_u),
\end{align}
and note from Proposition \ref{Fol-thm} that $h_u(x,x_u)=x_{cs}$.
Then, by \eqref{Dec-2} we have
\begin{align*}
H^{-1}(H(x)) =\pi_u H(x) + h_u(H(x),0)
=x_u+h_u(x_{cs},0)=x_u+x_{cs}=x.
\end{align*}
Moreover, noting that $\pi_{cs}H^{-1}(x)=h_u(x,0)$ and $x=x_u+h_u(x,x_u)$ belong to the same leaf, we have
\begin{align*}
H(H^{-1}(x))=x_u+h_u(\pi_{cs}H^{-1}(x),x_u)=x_u+h_u(x,x_u)=x.
\end{align*}
Therefore, $H:\mathbb{R}^d\to \mathbb{R}^d$ is a homeomorphism.

In order to prove \eqref{Dif-3}, by \eqref{Dec-1}-\eqref{Dec-2}, we see that $H^{\pm 1}(x_{cs})=h_u(x_{cs},0)=x_{cs}$.
Moreover, it follows from \eqref{Dec-1} and \eqref{hh-u} (with $x=x_{cs}$) that
\begin{align}\label{Dec-10}
\frac{\|H(x)-x\|}{\|x-x_c\|^{1+\beta}}
=&~  \frac{\|x_u+h_u(x_{cs},x_u)-x\|}{\|x_{su}\|^{1+\beta}}
\notag \\
= &~ \frac{\| h_u(x_{cs},x_u)-x_{cs}\|}{\|(x_s,x_u)\|^{1+\beta}}\le L\quad {\rm for}~x\ne x_c.
\end{align}
Then, for any $\tilde{x}_c\in X_c$, we see from \eqref{Dec-10} that
\begin{align}\label{Dec-11}
\frac{\|H(x)- x\|}{\|x-\tilde{x}_c\|^{1+\beta}}
\le \frac{\|H(x)- x\|}{\|x-x_c\|^{1+\beta}}\le L\quad {\rm for}~x\ne \tilde x_c~{\rm and}~x\ne x_c
\end{align}
since $\|x-x_c\|=\|x_{su}\| \le \|(x_{su}, x_c-\tilde{x}_c)\|= \|x-\tilde{x}_c\|$.
Similarly, for $H^{-1}$, we see from \eqref{Dec-2} and \eqref{hh-u} (with $z_u=0$) that
\begin{align*}
\frac{\|H^{-1}(x)-x\|}{\|x-x_c\|^{1+\beta}}
=&~  \frac{\|x_u+h_u(x,0)-x\|}{\|x_{su}\|^{1+\beta}}
\\
=&~  \frac{\| h_u(x,0)-x_{cs}\|}{\|(x_{su},0)\|^{1+\beta}}\le L\quad {\rm for}~x\ne x_c,
\end{align*}
and then \eqref{Dif-3} holds.

Moreover, by the invariance of the unstable foliation and by \eqref{huhu}, we know that $F\circ H(x)$
and $F(x_{cs})$ belong to the same leaf, i.e.,
$
h_u(F\circ H(x),z_u)=h_u(F(x_{cs}),z_u).
$
Thus,
\begin{align*}
\pi_{cs}\hat F(x)=&~\pi_{cs}H^{-1} \circ F \circ H(x) =h_u(F \circ H(x),0)
\\
=&~ h_u(F(x_{cs}),0)=F(x_{cs}).
\end{align*}
Moreover, since \eqref{Dec-1}-\eqref{Dec-2} give
$
\pi_{u}\hat F(x)=\pi_{u}F \circ H(x)=\pi_{u}F(x_u+ h_u(x_{cs}, x_u)),
$
we compute that
\begin{align*}
\partial_{x_u} (\pi_{u}\hat F)(x)=&~ \Big(\partial_{x_{cs}}(\pi_{u}F)(x_u+ h_u(x_{cs}, x_u))\partial_{x_u} h_u(x_{cs}, x_u),
\\
&\quad  \partial_{x_u}(\pi_{u}F)(x_u+ h_u(x_{cs}, x_u))\Big),
\end{align*}
which together with \eqref{Df-hold} implies \eqref{xu-F}.
This completes the proof.
\end{proof}

Remark that, having Lemma \ref{dec-lem}, we may
define $g:X_{cs}\to X_{cs}$ and $G_{x_{cs}}:X_u\to X_u$ by
\begin{align}\label{gg-GG}
g(x_{cs}):=\pi_{cs} \hat F(x)=F(x_{cs})\quad {\rm and}\quad G_{x_{cs}}(x_u):=\pi_u \hat F(x),
\end{align}
and see from \eqref{xu-F} that
$\hat F(x)=(\pi_{cs}\hat F(x),\pi_u\hat F(x))=(g(x_{cs}), G_{x_{cs}}(x_u))$
can be viewed as a $C^{1,\alpha}$ fiber-preserving mapping (covering $g$) of the fiber bundle $X_{cs}\times X_u$.
Moreover, putting
\begin{align}\label{def-Au}
A_u(x_{cs}):=\partial_{x_u}(\pi_u \hat F)(x_{cs})=DG_{x_{cs}}(0),
\end{align}
we obtain from the first inequality of \eqref{xu-F} that $A_u(x_{cs})$ generates an expansive linear cocycle (over $g$) by the roughness theory of exponential dichotomy (see e.g. \cite{Henry-Book, ZLZ-JDE}). It means that $\hat F$ can be viewed as an expansive $C^{1,\beta}$ fiber-preserving mapping (covering $g$).

Next, let $U\subset \mathbb{R}^d$ be a small neighborhood of the origin such that $\pi_{cs}U\subset \tilde U$, where $\tilde U\subset X_{cs}$ is given in \eqref{Wcs-Stp}. We present the following two lemmas which will be proved in Sections \ref{Sec-6} and \ref{Sec-7}, respectively.
For simplicity of notation, we will use $\beta$ instead of a more precise $\beta_i$ for distinct H\"older exponents, unless stated otherwise.

\begin{lemma}\label{co-redu}
There is a
$C^{1,\beta}$  {\rm (}$\beta\in (0,\alpha)${\rm )}
diffeomorphism $\Theta:U\to \mathbb{R}^d$ such that
$
\tilde{F}:=\Theta\circ \hat F\circ \Theta^{-1}
$
is a $C^{1,\beta}$ diffeomorphism, which satisfies \eqref{xu-F} with $\hat F$ in place of $\tilde{F}$ and satisfies that
\begin{align}\label{barF-Ac}
\partial_{x_u}(\pi_u \tilde{F})(x_{cs})=A_u(x_c),\quad \forall (x_c,x_s)\in \tilde U.
\end{align}
Moreover,
\begin{align}\label{theta-DD}
\Theta(x_{cs})=x_{cs},\quad D\Theta(x_{cs})={\rm diag}(id_{cs}, P_u(x_{cs})),\quad \forall x_{cs}\in \tilde U.
\end{align}
and, for any $\tilde{x}_c\in \tilde U$,
\begin{align}\label{theta-D}
\|\Theta^{\pm 1}(x)-x\|\le L\|x-\tilde{x}_c\|^{1+\beta}, \quad \forall x\in U.
\end{align}
\end{lemma}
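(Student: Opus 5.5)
We reduce the $x_s$-dependence of the linear cocycle $A_u(x_{cs})$ over $g$ to one depending only on $x_c$, and then realize the resulting conjugacy as the derivative of a $C^{1,\beta}$ map via Whitney extension.

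\emph{Step 1 (H\"older conjugacy of cocycles).} On $\tilde U$ the stable foliation is straightened by \eqref{Wcs-Stp}, so $\pi_c g(x_c,x_s)=g(x_c)$. Points $(x_c,x_s)$ and $(x_c,0)$ therefore have forward orbits that stay in the same stable leaf, and
\[
\|g^n(x_c,x_s)-g^n(x_c)\|\le K(\lambda_s^+)^n\|x_s\|, \qquad n\ge 0.
\]
We seek $P_u(x_{cs})\in GL(X_u)$ with $P_u(x_c)=id_u$ and
\[
P_u(g(x_{cs}))\,A_u(x_{cs})=A_u(x_c)\,P_u(x_{cs}).
\]
The natural candidate comes from iterating backward along the expanding direction:
\[
P_u(x_{cs}):=\lim_{n\to\infty}\mathcal{A}_u(n,0;x_c)^{-1}\mathcal{A}_u(n,0;x_{cs}),
\]
where $\mathcal{A}_u(n,0;\cdot)$ denotes the cocycle generated by $A_u(\cdot)$ over $g$. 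Convergence follows from a telescoping sum. The $n$-th term is bounded by
\[
\|\mathcal{A}_u(n+1,0;x_c)^{-1}\|\,\|A_u(g^n x_{cs})-A_u(g^n x_c)\|\,\|\mathcal{A}_u(n,0;x_{cs})\| \lesssim \Bigl(\frac{\lambda_u^+}{\lambda_u^-}\Bigr)^n\bigl((\lambda_s^+)^{n}\|x_s\|\bigr)^{\beta}.
\]
Here we use that $A_u(\cdot)$ is $\beta$-H\"older in $x_{cs}$, which follows from the H\"older dependence of $\partial_{x_u}h_u$ on its base point. The series converges once
\[
(\lambda_u^+/\lambda_u^-)(\lambda_s^+)^\beta<1.
\]
\textbf{This bunching requirement is the main obstacle.} It does not follow from \eqref{sp-A} alone, since a fixed bandwidth $\lambda_u^+/\lambda_u^-$ may exceed $(\lambda_s^+)^{-\beta}$ when $\beta$ is small. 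To get around this, I would first decompose $X_u=\oplus_{j>k}X_j$ using the fine dichotomy of $A_u(\cdot)$, with each block-diagonal part close to $A_j$. This reduces the problem to blockwise conjugacies, where the spectral gap within each block is $1+O(\varsigma+\delta)$ and can be made arbitrarily close to $1$. Alternatively, I would estimate via a smallness argument on $\|A_u(x_{cs})-A_u(x_c)\|\le M\|x_s\|^\beta$ and accept a smaller exponent $\beta$. The same series yields $\|P_u(x_{cs})-id_u\|\le L\|x_s\|^{\beta}$ and $\beta$-H\"older continuity of $P_u$ in $x_{cs}$.

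\emph{Step 2 (Whitney extension).} On the closed set $X_{cs}\times\{0\}$ we prescribe the $1$-jet consisting of the value $\Theta(x_{cs})=x_{cs}$ and the derivative $\mathrm{diag}(id_{cs},P_u(x_{cs}))$. Whitney's compatibility condition for $C^{1,\beta}$ requires
\[
\|y_{cs}-x_{cs}-\mathrm{diag}(id_{cs},P_u(x_{cs}))(y_{cs}-x_{cs})\|\le C\|y_{cs}-x_{cs}\|^{1+\beta}.
\]
Since the difference $y_{cs}-x_{cs}$ has no $X_u$ component, the left side vanishes identically, and the remaining requirement is just the H\"older continuity of $P_u$. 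The extension can even be written explicitly as $\Theta(x)=x_{cs}+\rho(x)P_u(x_{cs})x_u$ with a cutoff $\rho$. Its derivative $\mathrm{diag}(id_{cs},P_u(x_{cs}))$ at $X_{cs}$ is invertible, so $\Theta$ is a local $C^{1,\beta}$ diffeomorphism near $\tilde U$, and \eqref{theta-DD} holds. Since $P_u(x_c)=id_u$ and $\|P_u(x_{cs})-id_u\|\le L\|x_s\|^\beta$, we get
\[
\|\Theta(x)-x\|\le L\|x_s\|^\beta\|x_u\|\le L\|x-\tilde x_c\|^{1+\beta},
\]
and similarly for $\Theta^{-1}$. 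This gives \eqref{theta-D}.

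\emph{Step 3 (verification).} $\Theta$ preserves the fibration and acts as the identity on $X_{cs}$. Hence $\tilde F=\Theta\circ\hat F\circ\Theta^{-1}$ keeps the structure $\pi_{cs}\tilde F=g$. By the chain rule at $x_u=0$,
\[
\partial_{x_u}(\pi_u\tilde F)(x_{cs})=P_u(g(x_{cs}))A_u(x_{cs})P_u(x_{cs})^{-1}=A_u(x_c),
\]
which is \eqref{barF-Ac}. The estimates in \eqref{xu-F} persist because $P_u$ is close to $id_u$ and H\"older, after shrinking $U$ and decreasing $\beta$.
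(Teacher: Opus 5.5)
Your architecture matches the paper's. The transfer map is the telescoping limit $\lim_{n\to\infty}\mathcal{A}(0,n;x_c)\mathcal{A}(n,0;x_{cs})$ (cf.\ \eqref{B-lim}). The bandwidth-versus-$(\lambda_s^+)^{\beta}$ obstruction you isolate is removed by first block-diagonalizing $A_u(x_{cs})$ along its fine invariant splitting, so that each block converges at rate $\frac{\lambda_i+\varsigma}{\lambda_i-\varsigma}(\lambda_s^+)^{\beta_E}<1$. Then $\mathrm{diag}(id_{cs},P_u(x_{cs}))$ is realized as a derivative by Whitney extension, and \eqref{barF-Ac} is checked by the chain rule.

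\textbf{The explicit $\Theta$ is not $C^1$.} The step you actually use for \eqref{theta-D} and for fiber preservation fails: $\Theta(x)=x_{cs}+\rho(x)P_u(x_{cs})x_u$ is not $C^1$, let alone $C^{1,\beta}$. Since $P_u$ is only $\beta$-H\"older, $x_{cs}\mapsto P_u(x_{cs})x_u$ is in general not differentiable when $x_u\neq 0$. The formula is differentiable at points of $X_{cs}$ only because there the H\"older defect is multiplied by $\|x_u\|\to 0$. Smoothing that defect off $X_{cs}$, at the scale $\|x_u\|$, is exactly what Whitney's extension does, so this formula is no shortcut around it. The repair is easy: set $\pi_{cs}\Theta(x):=x_{cs}$ and apply Lemma~\ref{Exten-lem} only to the $X_u$-component, prescribing value $0$ and derivative $(0,P_u(x_{cs}))$ on $X_{cs}$. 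This gives a fiber-preserving $C^{1,\beta}$ diffeomorphism near $\tilde U$ satisfying \eqref{theta-DD}. Then \eqref{theta-D} follows as in the paper by the mean value theorem, from $\Theta(\tilde x_c)=\tilde x_c$, $D\Theta(\tilde x_c)=id$ (because $P_u(x_c)=id_u$), and the $\beta$-H\"older continuity of $D\Theta$.

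\textbf{The fallback for bunching fails.} A smallness argument with a smaller $\beta$ cannot replace the block decomposition. The bound $\|A_u(x_{cs})-A_u(x_c)\|\le M\|x_s\|^{\beta}$ only changes the constant in front of $\bigl((\lambda_u^+/\lambda_u^-)(\lambda_s^+)^{\beta}\bigr)^n$, and shrinking $\beta$ pushes $(\lambda_s^+)^{\beta}$ toward $1$, which makes convergence worse. So the block route is indispensable, and it needs two ingredients your sketch omits:
\begin{itemize}
\item H\"older dependence on $x_{cs}$ of the splitting $E_{k+1}(x_{cs})\oplus\cdots\oplus E_p(x_{cs})$, hence of a block-diagonalizing $P_1(x_{cs})$. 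The paper takes a $\beta_E$-H\"older splitting from Pesin's Proposition 3.9 and then chooses $\varsigma$ small relative to $\beta_E$.
\item The reassembly $P_u(x_{cs})=P_1(x_c)^{-1}B(x_{cs})P_1(x_{cs})$ with $B=\mathrm{diag}(B_{k+1},\dots,B_p)$. Using $\pi_c g(x_{cs})=g(x_c)$, this conjugates $A_u(x_{cs})$ to $A_u(x_c)$ itself rather than to its block-diagonal form, and it still satisfies $P_u(x_c)=id_u$.
\end{itemize}

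\textbf{Minor point on H\"older continuity of $A_u$.} This does not come from H\"older dependence of $\partial_{x_u}h_u$ on the base point; Proposition~\ref{Fol-thm} gives only continuity in $x$. The reason is simpler: $h_u(x_{cs},0)=x_{cs}$, and $\partial_{x_{cs}}(\pi_uF)(x_{cs})=0$ by invariance of $X_{cs}$. Hence $A_u(x_{cs})=\partial_{x_u}(\pi_uF)(x_{cs})$, which is H\"older because $F$ is $C^{1,\alpha}$.
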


By a similar discussion to \eqref{DH0}, we know from \eqref{theta-D} that $D\Theta^{\pm 1}(\tilde x_c)=id$. Then, we can verify that the above $\tilde{F}$ is a $C^{1,\beta}$ diffeomorphism satisfying all conditions of Proposition \ref{Fol-thm}, which gives the unstable foliation of $\tilde{F}$. Without loss of generality, we may assume that the unstable foliation of $\tilde{F}$ is still straightened up because, otherwise, we can use Lemma \ref{dec-lem} again to straighten up the unstable foliation. Note that \eqref{theta-DD} implies that the leaves of the unstable foliation of $\tilde{F}$ are tangent to the subspace $X_u$ at every point $x_{cs}$ since $D\Theta$ does not change the $X_u$ direction at $x_{cs}$. Therefore, the derivative of the transformation that straightens up the unstable foliation is $id$ at $x_{cs}$, i.e., it does not change \eqref{barF-Ac}.

Thus, we still use the notations given in \eqref{gg-GG} to write
\begin{align}\label{barF-FP}
\tilde{F}(x)=(g(x_{cs}), G_{x_{cs}}(x_u)),
\end{align}
where $G_{x_{cs}}(0)=0$ since $X_{cs}$ is still the invariant manifod of $\tilde F$, and give the following lemma.

\begin{lemma}\label{pro-scu}
There exists a homeomorphism $\Phi: U \to \mathbb{R}^d$ such that $\pi_{cs}\Phi(x)=x_{cs}$ and
\begin{align}\label{PSI-F}
\Phi^{-1}\circ \tilde{F}\circ \Phi(x)=(g(x_{cs}),A_u(x_c)x_u),\quad \forall x=(x_{c},x_s, x_u)\in U.
\end{align}
Moreover, for any $\tilde{x}_c\in \tilde U$,
\begin{align}\label{Dscu-9}
\|\Phi^{\pm 1}(x)-x\|\le L\|x-\tilde{x}_c\|^{1+ \beta}, \quad \forall x\in U.
\end{align}
\end{lemma}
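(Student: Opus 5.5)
The construction will use the fiber-preserving structure \eqref{barF-FP}. We look for $\Phi$ of the form $\Phi(x)=(x_{cs},x_u+\psi(x_{cs},x_u))$. By \eqref{barF-Ac} we write
$$G_{x_{cs}}(x_u)=A_u(x_c)x_u+r(x_{cs},x_u),$$
where $r(x_{cs},0)=0$ and $\partial_{x_u}r(x_{cs},0)=0$ for $x_{cs}\in\tilde U$. The $\beta$-H\"older continuity of $\partial_{x_u}G$ in $x_u$, which comes from \eqref{xu-F}, then gives $\|r(x_{cs},x_u)\|\le M\|x_u\|^{1+\beta}$.

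The conjugacy equation \eqref{PSI-F} becomes a fiberwise functional equation:
$$A_u(x_c)\psi(x_{cs},x_u)+r\bigl(x_{cs},x_u+\psi(x_{cs},x_u)\bigr)=\psi\bigl(g(x_{cs}),A_u(x_c)x_u\bigr).$$
Since the linear cocycle generated by $A_u(x_c)$ over $g$ is expansive by \eqref{S-ED}, we solve it by backward iteration. We rewrite it as
$$\psi(x_{cs},x_u)=A_u(x_c)^{-1}\Bigl[\psi\bigl(g(x_{cs}),A_u(x_c)x_u\bigr)-r\bigl(x_{cs},x_u+\psi(x_{cs},x_u)\bigr)\Bigr],$$
or equivalently pull back along $g^{-1}$ so that the contraction $A_u^{-1}$ appears. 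We then apply the contraction mapping principle on the space of continuous $\psi$ with $\psi(x_{cs},0)=0$ and weighted norm
$$\sup\frac{\|\psi(x)\|}{\|x-\tilde x_c\|^{1+\beta}}.$$

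This is exactly the "lifting technique": for each base orbit the problem is the hyperbolic (purely expanding) differentiable linearization of \cite[Theorem 7.1]{ZLZ-TAMS}, now carried with base-point dependence. The key estimate is that $(\lambda_u^-)^{-1}(\lambda_u^+)^{1+\beta}$ needs no control. Only the backward dynamics matter: $x_u$ contracts at rate at most $(\lambda_u^-)^{-1}$, so the nonlinear error $\|r\|\le M\|x_u\|^{1+\beta}$ is summable against $A_u^{-1}$ as long as the center drift $(1\pm\varsigma)$ of $g$ is dominated by $(\lambda_u^-)^{\beta}$. This holds after shrinking $\beta$.

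To obtain a homeomorphism rather than merely a semiconjugacy, I would construct the inverse map the same way, by solving the analogous equation with the roles of $\tilde F$ and its linear part exchanged. Uniqueness of solutions in the weighted class then shows that the two maps are mutually inverse. Because the construction is fiberwise, $\pi_{cs}\Phi(x)=x_{cs}$ holds automatically.

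To get \eqref{Dscu-9}, the fixed point comes with the bound $\|\psi(x_{cs},x_u)\|\le L\|x_u\|^{1+\beta}$. This should really be $L\|(x_s,x_u)\|^{1+\beta}$, with the $x_s$ contribution controlled through \eqref{barF-Ac}, which makes the linear part independent of $x_s$. Since $\|x_u\|\le\|x-\tilde x_c\|$, the bound gives \eqref{Dscu-9}. The same applies to $\Phi^{-1}$.

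The step I expect to be the main obstacle is locality. $\tilde F$ is only defined on $U$, and $g$ does not keep $U$ invariant in the center direction. Backward orbits of $g$ may leave $\tilde U$, where \eqref{barF-Ac} fails. I would first try to handle this by extending $A_u(\cdot)$ and $r$ globally with the cut-off function, keeping \eqref{barF-Ac} on $\tilde U$ and smallness everywhere, and then solving globally. The price is verifying that the $(1+\beta)$-estimate survives the region where the extended cocycle depends on $x_s$. I would control that region using the exponential contraction in $x_s$ from \eqref{ED}, together with the gap condition on $\beta$ used in Proposition \ref{Fol-thm}.
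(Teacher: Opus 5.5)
Your central step, a single contraction for the fiberwise equation in the weighted class, does not work under the paper's hypotheses. In the backward form $\psi(y)=A_u(\cdot)\,\psi(L^{-1}y)+r(\cdots)$, with $L(x):=(g(x_{cs}),A_u(x_c)x_u)$, the nonlinear error is pushed forward by the \emph{expanding} cocycle, not by $A_u^{-1}$. The linear part $\psi\mapsto A_u\,\psi\circ L^{-1}$ multiplies the weight $\|x_u\|^{1+\beta}$ by roughly $(\lambda_p+\varsigma)(\lambda_{k+1}-\varsigma)^{-(1+\beta)}$. This is $<1$ only under the bunching condition $\lambda_p+\varsigma<(\lambda_{k+1}-\varsigma)^{1+\beta}$ across the whole unstable spectrum. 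That condition does not follow from \eqref{sp-A} once $p>k+1$ (take $\lambda_{k+1}=2$, $\lambda_p=4$), and shrinking $\beta$ only makes it worse. The center drift of $g$ is not where the difficulty lies, since $r$ is estimated uniformly in the base point.

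The failure is structural, not an artifact of the estimate. On a fiber with $G(x_{k+1},x_p)=(2x_{k+1},4x_p+x_{k+1}^2)$ your equation reads $\psi_p(2x_{k+1},4x_p)=4\psi_p(x_{k+1},x_p)+x_{k+1}^2$, and its backward series $\sum_{n\ge1}4^{n-1}(2^{-n}x_{k+1})^2$ diverges. Solutions do exist, for example $x_{k+1}^2\log|x_{k+1}|/(4\log 2)$. But adding $c\,x_{k+1}^2=O(\|x\|^{1+\beta})$, which solves the homogeneous equation, gives another one. So uniqueness in your weighted class also fails, and you need it to show that $\Phi$ and your candidate inverse are mutually inverse.

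What is missing is the band-by-band decoupling, which removes resonances without any non-resonance or bunching hypothesis. The paper lifts $\tilde F$ to $\mathbb{G}_u(\eta)(x_{cs})=G_{g^{-1}(x_{cs})}(\eta(g^{-1}(x_{cs})))$ on $C^0_b(X_{cs},X_u)$. This is a $C^{1,\beta}$ map with $D\mathbb{G}_u(\mathbf{0})=\mathbb{A}_u$ and $\Sigma(\mathbb{A}_u)\subset\bigcup_{i=k+1}^p[\lambda_i-\varsigma,\lambda_i+\varsigma]$. The paper then invokes \cite[Lemma 6.1]{ZLZ-TAMS}: for each $\ell$ one straightens the weak-unstable manifold tangent to the slower bands, then linearizes the $\ell$-th component by $\varphi_\ell=\lim_n\mathbb{A}_\ell^n\pi_\ell\mathbb{G}_u^{-n}$. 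This needs only the narrow-band condition $(\lambda_\ell+\varsigma)/(\lambda_\ell-\varsigma)<(\lambda_{k+1}-\varsigma)^\beta$, which holds automatically.

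The map $\Phi(x):=(x_{cs},\mathbb{H}_u(\zeta_x)(x_{cs}))$ does not depend on the extension $\zeta_x$, because each step acts fiberwise: the weak-unstable manifolds restrict to the weak-unstable leaves of $\tilde F$. Evaluating at $i_x(y_{cs})=e^{-\|y_{cs}-x_{cs}\|}x_u$, with $\|i_x\|_\infty=\|x_u\|$, turns $\|\mathbb{H}_u^{\pm1}(\eta)-\eta\|_\infty\le L\|\eta\|_\infty^{1+\beta}$ into $\|\Phi^{\pm1}(x)-x\|\le L\|x_u\|^{1+\beta}$. That already gives \eqref{Dscu-9}, with no $x_s$-term needed.

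You can repair your fiberwise scheme along the same lines: first straighten the flag of fiberwise weak-unstable foliations, then solve one narrow band at a time. As written, it covers only the single-band case $p=k+1$.
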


Having the above Lemmas \ref{dec-lem}-\ref{pro-scu}, we may define
$
\Psi:=\Phi\circ \Theta^{-1}\circ H,
$
and obtain from \eqref{PSI-F} that
\begin{align}\label{lin-u}
\Psi^{-1}\circ F\circ \Psi(x)=(g(x_{cs}),A_u(x_c)x_u),\quad \forall x=(x_{c},x_s, x_u)\in U,
\end{align}
which satisfies that
\begin{align}\label{H-x}
\begin{split}
\|\Psi(x)-x\|
&\le \|\Phi\circ \Theta^{-1}\circ H(x)-\Theta^{-1}\circ H(x)\|
\\
&\quad +\|\Theta^{-1}\circ H(x)-H(x)\|+\|H(x)-x\|
\\
&\le L\|\Theta^{-1}\circ H(x)-\tilde x_c\|^{1+\beta}
\\
&\quad +L\|H(x)-\tilde x_c\|^{1+\beta}+L\|x-\tilde x_c\|^{1+\beta}
\\
&\le L\Big(\|\Theta^{-1}\circ H(x)\!-\!H(x)\|\!+\!\|H(x)\!-\!x\|\!+\!\|x\!-\!\tilde x_c\|\Big)^{1+\beta}
\\
&\quad+L\Big(\|H(x)-x\|+\|x-\tilde x_c\|\Big)^{1+\beta}+L\|x-\tilde x_c\|^{1+\beta}
\\
&\le L\Big(\|H(x)-\tilde x_c\|^{1+\beta}+\|x-\tilde x_c\|^{1+\beta}+\|x-\tilde x_c\|\Big)^{1+\beta}
\\
&\quad+L\Big(\|x-\tilde x_c\|^{1+\beta}+\|x-\tilde x_c\|\Big)^{1+\beta}+L\|x-\tilde x_c\|^{1+\beta}
\\
&\le 21L\|x-\tilde x_c\|^{1+\beta},
\\
\|\Psi^{-1}(x)\,-&\,x\| \le 21L\|x-\tilde x_c\|^{1+\beta}
\end{split}
\end{align}
for any $\tilde x_c\in \tilde U$ for all $x\in U$ near $\tilde x_c$ by \eqref{Dif-3}, \eqref{theta-D} and \eqref{Dscu-9}. Here, we used the fact that $\|x-\tilde x_c\|^{1+\beta}\le \|x-\tilde x_c\|$ when $\|x-\tilde x_c\|<1$.

Next, we similarly discuss $g(x_{cs})=F(x_{cs})$ on the center-stable manifold $X_{cs}$. 
Note that the stable foliation of $g$ in $\tilde U$ has been straightened up by the discussion given    
below Lemma \ref{IF} 
and that $\partial_{x_s}(\pi_s g)(x_{c}) =A_s(x_c)$.
Thus, we can omit the corresponding results for $g$ to Lemmas \ref{co-redu} and \ref{pro-scu}, and directly apply an argument analogous to that of Lemma \ref{pro-scu} to obtain a homeomorphism
$\psi:\tilde U\to X_{cs}$, which linearizes $g$ along the stable direction, i.e.,
\begin{align}\label{lin-s}
\psi^{-1}\circ g\circ \psi(x_{cs})=(g(x_{c}),A_s(x_c)x_s),\quad \forall x_{cs}=(x_{c},x_s)\in \tilde U,
\end{align}
and satisfies
\begin{align}\label{psi-xcs}
\pi_c\psi(x_{cs})=x_c,\quad \|\psi^{\pm 1}(x_{cs})-x_{cs}\|\le 5L\|x_{cs}-\tilde x_c\|^{1+\beta}
\end{align}
for any $\tilde x_c\in \tilde U$ and for all $x_{cs}\in \tilde U$ near $\tilde x_c$ by using a similar argument to \eqref{H-x}.

Now we are ready to prove our main result.

\begin{proof}[Proof of Theorem {\rm \ref{Mai-thm}.}]
Defining $\tilde \Psi:U\to \mathbb{R}^d$ by
 $\tilde \Psi:=\Psi\circ (\psi,id_u)$, we verify from \eqref{lin-u}, \eqref{lin-s} and the first equality of \eqref{psi-xcs} that
\begin{align*}
 \tilde \Psi^{-1} \circ F \circ \tilde \Psi(x)
 =&~ (\psi,id_u)^{-1} \circ \Psi^{-1} \circ F \circ \Psi \circ (\psi,id_u)(x)
\\
=&~ (\psi^{-1},id_u)\circ \Big(g\circ \psi(x_{cs}), \, A_u(\pi_c \psi(x_{cs}))x_u\Big)
\\
=&~ \Big(\psi^{-1}\circ g \circ \psi(x_{cs}), ~A_u(x_c)x_u\Big)
\\
=&~ \Big(g(x_c), ~A_s(x_c)x_s, ~A_u(x_c)x_u\Big),\quad \forall x\in U,
\end{align*}
and verify from \eqref{H-x} and the second inequality of \eqref{psi-xcs} that
$$
\|\tilde \Psi^{\pm 1}(x)-x\|\le  110L\|x-\tilde x_c\|^{1+\beta}.
$$

Moreover, recalling that ${\mathcal T}$ given in \eqref{TTT} is a $C^{1,\alpha}$ mapping such that ${\mathcal T}(x_c)=x_c$ and \eqref{DTT} holds,
we define
$$
{\mathcal H}:=\tilde \Psi\circ {\mathcal T}.
$$
%where ${\mathcal T}$ is a $C^{1,\alpha}$ mapping given in Remark \ref{Rm-T},
Then, we obtain that
\begin{align}\label{HH-Re}
&\|{\mathcal H}(x)-\tilde x_c-D{\mathcal T}(\tilde x_c)(x-\tilde x_c)\|
\notag \\
&\le \|\tilde \Psi\circ {\mathcal T}(x)-{\mathcal T}(x)\|+\|{\mathcal T}(x)-\tilde x_c-D{\mathcal T}(\tilde x_c)(x-\tilde x_c)\|
\notag \\
&\le 110L\|{\mathcal T}(x)-\tilde x_c\|^{1+\beta}
+L\|x-\tilde x_c\|^{1+\alpha}
\notag \\
&\le 110L\Big(L\|x-\tilde x_c\|^{1+\alpha}+\|D{\mathcal T}(\tilde x_c)\|\,\|x-\tilde x_c\|\Big)^{1+\beta}+L\|x-\tilde x_c\|^{1+\alpha}
\notag \\
&\le M\|x-\tilde x_c\|^{1+\beta},\quad \forall x,\tilde x_c\in U.
\end{align}
Similarly to \eqref{T-Re}, we see from \eqref{HH-Re} that $\mathcal{H}(\tilde{x}_c)=\tilde{x}_c$ and
$$
\mathcal{H}(x)=\tilde{x}_c+\Delta(\tilde x_c)(x-\tilde x_c)+O(\|x-\tilde{x}_c\|^{1+\beta}), \quad \forall x,\tilde x_c\in U,
$$
where $\Delta(\tilde x_c):=D{\mathcal T}(\tilde x_c)$.
Similar conclusion also holds for ${\mathcal H}^{-1}$, which completes the proof.
\end{proof}

%%%%%%%%%%%%%%%%%%%%%%%%%%%%%%%%%%
\section{Reduction of cocycles}\label{Sec-6}

In this section, we prove Lemma \ref{co-redu}. The main idea is first to prove that the cocycles generated by $A_u(x_{cs})$ and $A_u(x_c)$ are cohomologous via a $\beta$-H\"older transfer mapping $P_u:\tilde U\subset X_{cs}\to Gl(d_u,\mathbb{R})$, where $d_u\in\mathbb{N}$ is the dimension of $X_u$, i.e.,
\begin{align}\label{Cohm-eqn}
P_u(g(x_{cs}))A_u(x_{cs})=A_u(x_{c})P_u(x_{cs}).
\end{align}
Here $x \mapsto (x_{cs}, P_u(x_{cs})x_u)$ is called the conjugacy between the two cocycles.
Then, we use the Whitney's extension theorem (see Lemma \ref{Exten-lem} in Appendix \ref{Appen-A}) to find a $C^{1,\beta}$ diffeomorphism $\Theta$ such that $D\Theta(x_{cs})={\rm diag}(id_{cs},P_u(x_{cs}))$, where $id_{cs}$ is the identity mapping on $X_{cs}$.

\begin{proof}[Proof of Lemma {\rm \ref{co-redu}}.]
We divide the proof of this lemma into three steps.

{\em Step 1.
Find a $P_1(x_{cs})\in Gl(d_u,\mathbb{R})$, which is $\beta_{E}$-H\"{o}der with respect to $x_{cs}$ for a small $\beta_E>0$, to diagonalize the linear cocycle generated by
$A_u(x_{cs})$. }

Consider the linear cocycle generated by $A_u(x_{cs}):X_u\to X_u$ defined in \eqref{def-Au}, i.e.,
\begin{align}\label{group-1}
\mathcal{A}_u(m,n;x_{cs}):=
\begin{cases}
 A_u(g^{m-1}(x_{cs})) \cdots A_u(g^{n}(x_{cs})), \, & m\ge n+1,
\\
id, \, & m=n,
\\
A_u(g^m(x_{cs}))^{-1} \cdots A_u(g^{n-1}(x_{cs}))^{-1}, \, & m\le n-1.
\end{cases}
\end{align}
Similarly to the discussion given below \eqref{group}, applying the roughness theory
 of exponential dichotomy, we know that there are projections $\Pi_i(x_{cs}): \mathbb{R}^d\to X_i$ for all $i=k+1,...,p$ corresponding to the decomposition
 $
X_u=E_{k+1}(x_{cs})\oplus\cdots\oplus E_p(x_{cs}),
 $
 where each $E_i(x_{cs})$ is an invariant subspace under the action of $A_u(x_{cs})$,
 such that
\begin{equation}\label{u-dich}
\begin{split}
\|\mathcal{A}_u(m,n;x_{cs})\Pi_i(x_{cs})\| \le K (\lambda_i + \varsigma)^{m-n}, \quad \forall m\ge n,
\\
\|\mathcal{A}_u(m,n;x_{cs})\Pi_i(x_{cs})\| \le K (\lambda_i -\varsigma)^{m-n}, \quad \forall m\le n,
\end{split}
\end{equation}
for all $i=k+1,...,p$. According to \cite[Proposition 3.9]{Pesin}, we see that \eqref{xu-F} and \eqref{u-dich} guarantee that
\begin{align}\label{E-1}
\mathrm{dist} (E_i(x_{cs}),E_i(\tilde{x}_{cs})) \le L\|x_{cs}-\tilde{x}_{cs}\|^{\beta_E}, \quad \forall x_{cs},\tilde x_{cs}\in X_{cs},
\end{align}
for all $i=k+1,...,p$ with a small constant $\beta_E>0$, where the distance between two subspaces $E_1$ and $E_2$
is defined by
\begin{align}\label{dist-E1E2}
\mathrm{dist}(E_1,E_2):=\max\Big(\max_{v_1\in E_1,\|v_1\|=1} \mathrm{dist} (v_1,E_2), \max_{v_2\in E_2,\|v_2\|=1} \mathrm{dist} (v_2,E_1) \Big)
\end{align}
with $\mathrm{dist}(v,E):=\min_{u\in E}\|u-v\|$. It means that each $E_i(x_{cs})$ is $\beta_E$-H\"older continuous with respect to $x_{cs}$.

Then, one can find a $P_1(x_{cs})\in Gl(d_u,\mathbb{R})$, which satisfies
\begin{align}\label{EP-1}
  \|P_1(x_{cs})^{\pm 1}- P_1(\tilde{x}_{cs})^{\pm 1}\| \le L\|x_{cs}-\tilde{x}_{cs}\|^{\beta_E},\quad \forall x_{cs},\tilde x_{cs}\in X_{cs},
\end{align}
such that
\begin{align}\label{PP-cs}
P_1(g(x_{cs})) A_u(x_{cs}) P_1(x_{cs})^{-1}=\mathrm{diag}(A_{k+1}(x_{cs}),...,A_{p}(x_{cs})).
\end{align}
Here, each $A_i(x_{cs}):X_i\to X_i$ is $\beta_E$-H\"{o}lder continuous in $x_{cs}$, i.e.,
\begin{align}\label{AA-u}
  \|A_i(x_{cs})- A_i(\tilde{x}_{cs})\| \le L\|x_{cs}-\tilde{x}_{cs}\|^{\beta_E},\quad \forall x_{cs},\tilde x_{cs}\in X_{cs},
\end{align}
such that
\begin{equation}\label{k-dich}
\begin{split}
\|\mathcal{A}_i(m,n;x_{cs})\| \le K (\lambda_i + \varsigma)^{m-n}, \quad \forall m\ge n,
\\
\|\mathcal{A}_i(m,n;x_{cs})\| \le K (\lambda_i -\varsigma)^{m-n}, \quad \forall m\le n,
\end{split}
\end{equation}
where $\mathcal{A}_i(m,n;x_{cs})$ is the linear cocycle generated by $A_i(x_{cs})$ in the same manner as \eqref{group-1}.

{\em Step 2.
Find a $\beta$-H\"older transfer mapping $P_u:\tilde U\subset X_{cs}\to Gl(d_u,\mathbb{R})$ such that \eqref{Cohm-eqn} holds.}

For all $i=k+1,...,p$ and $x_{cs}=(x_c,x_s)\in \tilde U\subset X_{cs}$, define
\begin{align}\label{B-lim}
B_i(x_{cs}):= \lim_{n\to \infty} \mathcal{A}_i(0,n;x_c) \mathcal{A}_i(n,0;x_{cs}).
\end{align}
If the limit \eqref{B-lim} exists, one can check that
\begin{align}\label{B-conju}
B_i (g(x_{cs})) A_i (x_{cs})
&=\lim_{n\to \infty} \mathcal{A}_i(0,n;g(x_c)) \mathcal{A}_i(n,0;g(x_{cs}))A_i (x_{cs})
\nonumber\\
&=A_i(x_c)\lim_{n\to \infty} \mathcal{A}_i(0,n+1;x_c) \mathcal{A}_i(n+1,0;x_{cs})
\nonumber\\
&=A_i(x_c) B_i(x_{cs}),
\end{align}
where $\pi_c g(x_{cs})=g(x_c)$ since the stable foliation is straightened up in $\tilde U$ as shown in \eqref{Wcs-Stp}.

In order to prove the existence of the limit \eqref{B-lim}, we
note that
\begin{align}\label{BB-conju}
B_i(x_{cs})&=\lim_{n\to \infty} \mathcal{A}_i(0,n;x_c)
\mathcal{A}_i(n,0;x_{cs})
\notag \\
&=\sum_{k=0}^\infty \Big\{ \mathcal{A}_i(0,k+1;x_c)\mathcal{A}_i(k+1,0;x_{cs})-
\mathcal{A}_i(0,k;x_c)\mathcal{A}_i(k,0;x_{cs}) \Big\}
\notag \\
&\quad+id_i,
\end{align}
where $id_i$ is the identity mapping on $X_i$.
Then, by \eqref{AA-u}-\eqref{k-dich} we have
\begin{align}\label{Ga-1}
&\|\mathcal{A}_i(0,k+1;x_c)
\mathcal{A}_i(k+1,0;x_{cs})-
\mathcal{A}_i(0,k;x_c)\mathcal{A}_i(k,0;x_{cs})\|
\notag \\
& \le \|\mathcal{A}_i(0,k+1;x_c) \| \| A_i(g^k(x_{cs}))-A_i(g^k(x_c)) \|
\| \mathcal{A}_i(k,0;x_{cs})\|
\notag \\
& \le K (\lambda_i - \varsigma)^{-(k+1)} L \|g^k(x_{cs})-g^k(x_c)\|^{\beta_E} K(\lambda_i+\varsigma)^k
\notag \\
& \le C \{(\lambda_i+\varsigma)(\lambda_i-\varsigma)^{-1}(\lambda_s^+ )^{\beta_E}\}^k,\quad \forall x_{cs}\in \tilde U,
\end{align}
with a constant $C>0$, where $x_{cs}$ and $x_c$ in $\tilde U$ lie on the same leaf of the stable foliation indicates that $\|g^k(x_{cs})-g^k(x_c)\| \le K (\lambda_s^+)^k$ for all $k\ge 0$ by a definition for the stable foliation similar to \eqref{SF-1}.
Thus, $B_i(x_{cs})$ is well-defined such that
\begin{align}\label{BB-idid}
B_i(x_{c})=id_i,\quad \forall x_c\in \tilde U,
\end{align}
since
$(\lambda_i+\varsigma)(\lambda_i-\varsigma)^{-1} (\lambda_s^+)^{\beta_E}<1$ with small $\varsigma>0$.

Next, we show that $B_i(x_{cs})$ is  H\"{o}lder continuous in $x_{cs}$.
For any $x_{cs}=(x_c,x_s),\tilde{x}_{cs}=(\tilde x_c,\tilde x_s)\in \tilde{U}$, by \eqref{BB-conju} we have
\begin{align*}
B_i(x_{cs})-B_i(\tilde{x}_{cs})
& = \sum_{k=0}^\infty \Big\{ \mathcal{A}_i(0,k+1;x_c)
\mathcal{A}_i(k+1,0;x_{cs})-
\mathcal{A}_i(0,k;x_c)\mathcal{A}_i(k,0;x_{cs})
\notag \\
&\qquad -\mathcal{A}_i (0,k+1;\tilde{x}_c)\mathcal{A}_i(k+1,0;\tilde{x}_{cs})
+\mathcal{A}_i(0,k;\tilde{x}_c)
\mathcal{A}_i(k,0;\tilde{x}_{cs})  \Big\}
\notag \\
& = \sum_{k=0}^\infty  \Gamma_k(x_{cs}) -\Gamma_k(\tilde{x}_{cs}),
\end{align*}
where
$
\Gamma_k(x_{cs}):=\mathcal{A}_i(0,k+1;x_c) \mathcal{A}_i(k+1,0;x_{cs})
- \mathcal{A}_i(0,k;x_c) \mathcal{A}_i(k,0;x_{cs})
$
satisfies that
$$
\|\Gamma_k(x_{cs})\| \le C \{(\lambda_i+\varsigma)(\lambda_i-\varsigma)^{-1}(\lambda_s^+ )^{\beta_E}\}^k,\quad \forall x_{cs}\in \tilde U,
$$
due to \eqref{Ga-1}. Moreover, we note that
\begin{align*}
 \Gamma_k(x_{cs})-\Gamma_k(\tilde{x}_{cs})
=&~ \mathcal{A}_i(0,k+1;x_c)
   \big( \mathcal{A}_i(k+1,0;x_{cs})-\mathcal{A}_i(k+1,0;\tilde{x}_{cs})\big)
\\
&~+ \big( \mathcal{A}_i(0,k+1;x_c)-\mathcal{A}_i(0,k+1;\tilde{x}_c)\big) \mathcal{A}_i(k+1,0;\tilde{x}_{cs})
\\
&~+ \mathcal{A}_i(0,k;x_c)
   \big( \mathcal{A}_i(k,0;x_{cs})-\mathcal{A}_i(k,0;\tilde{x}_{cs})\big)
\\
&~+ \big( \mathcal{A}_i(0,k;x_c)-\mathcal{A}_i(0,k;\tilde{x}_c)\big) \mathcal{A}_i(k,0;\tilde{x}_{cs})
\\
=:&~ \Gamma_{k,1}(x_{cs},\tilde{x}_{cs}) + \Gamma_{k,2}(x_{cs},\tilde{x}_{cs}) + \Gamma_{k,3}(x_{cs},\tilde{x}_{cs}) + \Gamma_{k,4}(x_{cs},\tilde{x}_{cs}).
\end{align*}
Then, \eqref{AA-u}-\eqref{k-dich} give
\begin{align*}
 \|\Gamma_{k,1}(x_{cs},\tilde{x}_{cs})\|
=&~ \|\mathcal{A}_i(0,k+1;x_c)
   \big( \mathcal{A}_i(k+1,0;x_{cs})-\mathcal{A}_i(k+1,0;\tilde{x}_{cs})\big)\|
\\
 \le&~  \|\mathcal{A}_i(0,k+1;x_c)\|
\|A_i(g^k(x_{cs}))\circ \cdots \circ A_i(x_{cs})
\\
&\quad- A_i(g^k(\tilde{x}_{cs}))\circ \cdots \circ A_i(\tilde{x}_{cs}) \|
\\
 \le &~ \|\mathcal{A}_i(0,k+1;x_c)\| \sum_{i=0}^k
\|A_i(g^k(x_{cs})) \cdots  A_i(g^{i+1}(x_{cs}))
\\
&\quad  \cdot\big(A_i(g^i(x_{cs}))-A_i(g^i(\tilde{x}_{cs})) \big)
   A_i(g^{i-1}(\tilde{x}_{cs})) \cdots A_i(\tilde{x}_{cs}) \|
\\
\le&~ K (\lambda_i-\varsigma)^{-k-1} \sum_{i=0}^k K (\lambda_i+\varsigma)^k L\|g^i(x_{cs})-g^i(\tilde{x}_{cs})\|^{\beta_E}
\\
\le&~ K^3L (\lambda_i-\varsigma)^{-k}(\lambda_i+\varsigma)^{k}
\sum_{i=0}^k (1+\varsigma)^{i \beta_E}
\|x_{cs}-\tilde{x}_{cs}\|^{\beta_E}
\\
\le &~ C (\lambda_i-\varsigma)^{-k}(\lambda_i+\varsigma)^{k} (1+2\varsigma)^{k \beta_E}\|x_{cs}-\tilde{x}_{cs}\|^{\beta_E},
\end{align*}
and similarly,
\begin{align*}
\|\Gamma_{k,2}(x_{cs},\tilde{x}_{cs})\| \le &~
C (\lambda_i-\varsigma)^{-k}(\lambda_i+\varsigma)^{k} (1+2\varsigma)^{k \beta_E }\|x_{c}-\tilde{x}_{c}\|^{\beta_E},
\\
\|\Gamma_{k,3}(x_{cs},\tilde{x}_{cs})\| \le &~ C (\lambda_i-\varsigma)^{-k}(\lambda_i+\varsigma)^{k} (1+2\varsigma)^{k \beta_E}\|x_{cs}-\tilde{x}_{cs}\|^{\beta_E},
\\
\|\Gamma_{k,4}(x_{cs},\tilde{x}_{cs})\| \le &~
C (\lambda_i-\varsigma)^{-k}(\lambda_i+\varsigma)^{k} (1+2\varsigma)^{k \beta_E}\|x_{c}-\tilde{x}_{c}\|^{\beta_E}.
\end{align*}
Therefore, in view of the above inequalities, we have
$$
\| \Gamma_k(x_{cs})-\Gamma_k(\tilde{x}_{cs}) \| \le 4C (\lambda_i-\varsigma)^{-k}(\lambda_i+\varsigma)^{k} (1+2\varsigma)^{k \beta_E}\|x_{cs}-\tilde{x}_{cs}\|^{\beta_E}.
$$

Next, let
$$
\tau_1 := \left(\frac{\lambda_i + \varsigma}{\lambda_i - \varsigma}\right) (\lambda_s^ +)^{\beta_E}<1, \quad
\tau_2 := \left(\frac{\lambda_i + \varsigma}{\lambda_i - \varsigma}\right) (1+2\varsigma)^{\beta_E}, \quad \rho=1.
$$
Then, since $\rho\tau_1<1$,  applying  Lemma \ref{Holder-lem} in Appendix \ref{Appen-A}, we get
\begin{align}\label{B-Holder}
\|B_{i}(x_{cs})-B_{i}(\tilde{x}_{cs})\|\le \sum_{k=0}^\infty \|\Gamma_k(x_{cs})-\Gamma_k(\tilde{x}_{cs})\| \le C \|x_{cs}-\tilde{x}_{cs}\|^{\beta_i},
%\quad \forall x_{cs},\tilde{x}_{cs}\in \tilde{U},
\end{align}
for any $x_{cs},\tilde{x}_{cs}\in \tilde{U}$,
with small constants $\beta_i>0$ for all $i=k+1,...,p$.
Now, we define
$
B(x_{cs}):=(B_{k+1}(x_{cs}),...,B_p(x_{cs}))
$
and
$$
P_u(x_{cs}):=B(x_{cs})P_1(x_{cs})P_1(x_c)^{-1}, \quad \forall x_{cs}\in \tilde{U}.
$$
By \eqref{PP-cs} and \eqref{B-conju}, we see that $P_u(x_{cs})$ satisfies \eqref{Cohm-eqn}, and
by \eqref{EP-1}, \eqref{BB-idid} and \eqref{B-Holder}, we see that $P_u(x_{cs})$ is $\beta$-H\"older with respect to $x_{cs}$ for a small constant $\beta>0$ and that $P_u(x_{c})=id_u$.

{\em Step 3. Applying the Whitney's extension theorem to obtain $\Theta$.}

In this step, without loss of  generality, we assume that $X_{cs}$ and $X_u$ are 1-dimensional spaces and that $x_{cs}$ and $x_u$ correspond to $z_1$ and $z_2$, respectively, given in the beginning of Appendix \ref{Appen-A}.

Defining $\theta_{00}(x_{cs}):=x_{cs}$, $\theta_{10}(x_{cs}):=1$ (i.e., $id_{cs}$)
and $\theta_{01}(x_{cs}):=0$
for all $x_{cs}\in \tilde U\subset X_{cs}$, we verify from \eqref{W.1} in Appendix \ref{Appen-A} (with $\Omega=\tilde U$, $r=1$, ${\bf j}, {\bf k}=(0,0),(1,0),(0,1)$) that
\begin{align*}
&R_{00}(x_{cs},\tilde x_{cs})=\theta_{00}(x_{cs})-\theta_{00}(\tilde x_{cs})-\theta_{10}(\tilde x_{cs})(x_{cs}-\tilde x_{cs})=0,
\\
&R_{10}(x_{cs},\tilde x_{cs})=\theta_{10}(x_{cs})-\theta_{10}(\tilde x_{cs})=0,\quad
R_{01}(x_{cs},\tilde x_{cs})=0,
\end{align*}
which implies that  \eqref{W.2} in Appendix \ref{Appen-A} holds when $w_{\bf j}=\theta_{\bf j}$.
Moreover, defining $\vartheta_{00}(x_{cs}):=0$, $\vartheta_{10}(x_{cs}):=0$
and $\vartheta_{01}(x_{cs}):=P_u(x_{cs})$
for all $x_{cs}\in \tilde U\subset X_{cs}$, we verify that
\begin{align*}
&R_{00}(x_{cs},\tilde x_{cs})=0,\quad
R_{10}(x_{cs},\tilde x_{cs})=0,
\\
&R_{01}(x_{cs},\tilde x_{cs})=\vartheta_{01}(x_{cs})-\vartheta_{01}(\tilde x_{cs})=P_u(x_{cs})-P_u(\tilde x_{cs}),
\end{align*}
which implies that  \eqref{W.2} in Appendix \ref{Appen-A} holds with $\alpha=\beta$ when $w_{\bf j}=\vartheta_{\bf j}$ since $P_u(x_{cs})$ is $\beta$-H\"older with respect to $x_{cs}$, as seen at the end of Step 2. Hence, by Lemma \ref{Exten-lem} (the Whitney’s extension theorem) given in Appendix \ref{Appen-A}, we obtain a $C^{1,\beta}$ mapping $\Theta: U\subset \mathbb{R}^d\to \mathbb{R}^d$ such that
\begin{align*}
&\pi_{cs} \Theta(x_{cs})=x_{cs},
\quad
\pi_u \Theta(x_{cs})=0,
\\
& D(\pi_{cs} \Theta)(x_{cs})=(id_{cs},0),
\quad
D(\pi_u \Theta)(x_{cs})=(0, P_u(x_{cs})),
\end{align*}
i.e., \eqref{theta-DD} holds.
Furthermore, since $P_u(x_{c})=id_u$ (i.e., $D\Theta(x_c)=id$), as seen at the end of Step 2, we conclude that $\Theta: U\subset \mathbb{R}^d\to \mathbb{R}^d$ is a $C^{1,\beta}$ diffeomorphism such that
\begin{align*}
\|\Theta^{\pm 1}(x)-x\|&=\|\Theta^{\pm 1}(x)-\Theta^{\pm 1}(\tilde x_c)-(x-\tilde x_c)\|
\\
&\le \|D\Theta^{\pm 1}(\xi)-D\Theta^{\pm 1}(\tilde x_c)\|\,\|x-\tilde x_c\|\le L\|x-\tilde x_c\|^{1+\beta},
\end{align*}
where $\xi$ lies between $x$ and $\tilde x_c$. This proves \eqref{theta-D}. The discussion for the general case that $X_{cs}$ and $X_u$ are higher dimensional spaces is almost the same.

Finally, by \eqref{gg-GG}-\eqref{def-Au}, we obtain that
$$
D\hat F(x_{cs})=
\left(
\begin{array}{ccc}
Dg(x_{cs})& 0
\\
* & A_u(x_{cs})
\end{array}
\right)
$$
and therefore \eqref{theta-DD} yields
\begin{align*}
D\tilde{F}(x_{cs})
=&~D\Theta(\hat F\circ \Theta^{-1}(x_{cs}))D\hat F(\Theta^{-1}(x_{cs}))D\Theta^{-1}(x_{cs})
\\
=&~\left(
\begin{array}{ccc}
id_{cs}& 0
\\
0 & P_u(g(x_{cs}))
\end{array}
\right)
\left(
\begin{array}{ccc}
Dg(x_{cs})& 0
\\
* & A_u(x_{cs})
\end{array}
\right)
\left(
\begin{array}{ccc}
id_{cs}& 0
\\
0 & P_u(x_{cs})^{-1}
\end{array}
\right)
\\
=&~\left(
\begin{array}{ccc}
Dg(x_{cs})& 0
\\
* & P_u(g(x_{cs}))A_u(x_{cs})P_u(x_{cs})^{-1}
\end{array}
\right)
\\
=&~
\left(
\begin{array}{ccc}
Dg(x_{cs})& 0
\\
* & A_u(x_{c})
\end{array}
\right),
\end{align*}
which proves \eqref{barF-Ac}. The proof is completed.
\end{proof}

Remark that $A_u(x_{cs})$ cannot be completely reduced to $A_u$, because once $A_u(x_c)$ is replaced by $A_u$, the term $\lambda_s^+<1$ in the estimate \eqref{Ga-1} would be replaced by $1+\varsigma>1$, and then we cannot guarantee that the convergence of the limit defined in \eqref{BB-conju}.

\section{Linearization of expansive fiber-preserving mapping}\label{Sec-7}

In this section, we are ready to prove  Lemma  \ref{pro-scu}, where we recall that the expensive fiber-preserving mapping $\tilde{F}$ has the form of \eqref{barF-FP}.

\begin{proof}[Proof of Lemma {\rm \ref{pro-scu}}.]
We divide the proof of this lemma into three steps.

{\em Step 1.
Lift $\tilde{F}$ to a mapping $\mathbb{G}_u$ defined in a space of bounded continuous mappings,
and then prove the differentiable linearization for $\mathbb{G}_u$.
}

Let $\mathcal{X}_\infty:=C^0_b(X_{cs},X_u)$ be the set of all bounded continuous mappings satisfying
$$
\|\eta\|_\infty:=\sup_{x_{cs}\in X_{cs}} \|\eta(x_{cs})\|<\infty, \quad \forall \eta\in \mathcal{X}_\infty.
$$
Then, $(\mathcal{X}_\infty,\|\cdot\|_\infty)$ is a Banach space.
Define a linear operator $\mathbb{A}_u: \mathcal{X}_\infty\to \mathcal{X}_\infty$ by
\begin{align}\label{Auu-g}
(\mathbb{A}_u \eta )(x_{cs}): =&~ \partial_{x_u}(\pi_u \tilde{F})(g^{-1}(x_{cs}))\eta(g^{-1}(x_{cs}))
\notag \\
=&~A_u(\pi_c g^{-1}(x_{cs}))\eta(g^{-1}(x_{cs}))= A_u(g^{-1}(x_{c}))  \eta(g^{-1}(x_{cs})) %\quad \forall \eta\in \mathcal{X}_\infty,
\end{align}
due to \eqref{barF-Ac}, where $\pi_c g^{-1}(x_{cs})=g^{-1}(x_c)$, as known below \eqref{B-conju}.
It follows from \eqref{ED} and  \eqref{S-ED} that $\mathbb{A}_u$ is a well-defined and bounded linear operator.
  Furthermore, we can verify that (i) the zero mapping $\mathbf{0} \in \mathcal{X}_\infty$ is a  fixed point of $\mathbb{A}_u \eta$;
(ii) $\mathbb{A}_u \eta$ is invertible and
$$
(\mathbb{A}_u^{-1} \eta)(x_{cs}):= A_u(x_{c})^{-1} \eta(g(x_{cs})), \quad \forall \eta\in \mathcal{X}_\infty.
$$
Using similar discussion for the dichotomy spectrum given in \cite[Section 3]{DZZ-MZ}, we can deduce from \eqref{D-sp} that
the spectral $\Sigma(\mathbb{A}_u)$ satisfies
\begin{align}\label{u-sp}
\Sigma(\mathbb{A}_u)\subset \bigcup_{i=k+1}^p [\lambda_i-\varsigma,\lambda_i+\varsigma].
\end{align}

Next, putting
\begin{align*}
\tilde{f}_{x_{cs}}(x_u):=G_{x_{cs}}(x_u)-A_u(x_{cs})x_u
=\pi_u \tilde F(x)-\partial_{x_u}(\pi_u\tilde F)(x_{cs})x_u,
\end{align*}
we verify that $\tilde{f}_{x_{cs}}(0)=G_{x_{cs}}(0)=0$ and
\begin{align}\label{DG0A}
  D\tilde{f}_{x_{cs}}(0)=\partial_{x_u}(\pi_u \tilde F)(x_{cs})-\partial_{x_u}(\pi_u\tilde F)(x_{cs})=0,
\end{align}
as seen from \eqref{barF-FP}.
This enables us to
define a mapping $\mathbb{G}_u: \mathcal{X}_\infty  \to \mathcal{X}_\infty$ by
\begin{align}\label{Fuu}
 \mathbb{G}_u (\eta)(x_{cs})
 := &~G_{g^{-1}(x_{cs})}(\eta(g^{-1}(x_{cs})) )
 \nonumber\\
 = &~A_u(g^{-1}(x_{cs}))  \eta(g^{-1}(x_{cs}))+ \tilde{f}_{g^{-1}(x_{cs})}(\eta(g^{-1}(x_{cs})) ).
 % \,\, \forall   \eta\in \mathcal{X}_\infty.
\end{align}
%Now, we establish the following auxiliary results for $\mathbb{G}_u$.
For any $\eta\in \mathcal{X}_\infty$, we obtain from \eqref{xu-F} (with $\hat F$ in place of $\tilde F$) and   \eqref{u-sp}-\eqref{DG0A} that
\begin{align*}
\sup_{x_{cs}\in X_{cs}} \|\mathbb{G}_u (\eta)(x_{cs})\|
&=\sup_{x_{cs}\in X_{cs}} \big\| A_u(g^{-1}(x_{cs}))  \eta(g^{-1}(x_{cs}))+ \tilde{f}_{g^{-1}(x_{cs})}(\eta(g^{-1}(x_{cs})) ) \big\|
\\
&\le(\lambda_p+\varsigma + \delta) \|\eta\|_\infty,
\end{align*}
which means that $\mathbb{G}_u:\mathcal{X}_\infty  \to \mathcal{X}_\infty$ is well-defined such that $\mathbb{G}_u({\bf 0})={\bf 0}$.

For the differentiability of $\mathbb{G}_u$, we claim that
\begin{align*}
(D\mathbb{G}_u(\eta)\sigma) (x_{cs})
=A_u(g^{-1}(x_{cs}))\sigma(g^{-1}(x_{cs}))
+ D\tilde{f}_{g^{-1}(x_{cs})}(\eta(g^{-1}(x_{cs}))) \sigma(g^{-1}(x_{cs}))
\end{align*}
for any $\eta,\sigma\in \mathcal{X}_\infty$.
In fact, by \eqref{Fuu} we have
\begin{align*}
&\mathbb{G}_u(\eta+\sigma)(x_{cs}) - \mathbb{G}_u(\eta)(x_{cs}) -(D\mathbb{G}_u(\eta)\sigma)(x_{cs})
\\
& = \tilde{f}_{g^{-1}(x_{cs})}((\eta+\sigma)(g^{-1}(x_{cs})) ) - \tilde{f}_{g^{-1}(x_{cs})}(   \eta(g^{-1}(x_{cs})) )
\\
&\quad  - D\tilde{f}_{g^{-1}(x_{cs})}(\eta(g^{-1}(x_{cs}))) \sigma(g^{-1}(x_{cs}))
\\
& =\int_0^1 \Big( D\tilde{f}_{g^{-1}(x_{cs})}((\eta+t\sigma)(g^{-1}(x_{cs}))) \sigma(g^{-1}(x_{cs}))
\\
&\qquad  - D\tilde{f}_{g^{-1}(x_{cs})}(\eta(g^{-1}(x_{cs})))
    \sigma(g^{-1}(x_{cs})) \Big) \, dt.
\end{align*}
Then by  \eqref{xu-F}, we further have
\begin{align*}
&\| \mathbb{G}_u(\eta+\sigma) - \mathbb{G}_u(\eta) - D(\mathbb{G}_u(\eta))\sigma \|_\infty
\\
&\le \sup_{x_{cs}\in X_{cs}} \int_0^1
\big\| D\tilde{f}_{g^{-1}(x_{cs})}( (\eta+t\sigma)(g^{-1}(x_{cs})))
\\
&\qquad - D\tilde{f}_{g^{-1}(x_{cs})}(\eta(g^{-1}(x_{cs}))) \big\| \| \sigma(g^{-1}(x_{cs})) \|  \, dt
\\
&\le \sup_{x_{cs}\in X_{cs}} \int_0^1 M \|t \sigma(g^{-1}(x_{cs})) \|^{\beta}   \| \sigma(g^{-1}(x_{cs})) \|  \, dt \le M \|\sigma\|_\infty^{1+\beta},
\end{align*}
which implies that
$$
\lim_{\sigma\to {\bf 0}} \frac{\| \mathbb{G}_u(\eta+\sigma) - \mathbb{G}_u(\eta) - D(\mathbb{G}_u(\eta))\sigma \|_\infty}{\|\sigma\|_\infty} = 0.
$$
This proves the differentiability of $\mathbb{G}_u$ such that
\begin{align*}
(D\mathbb{G}_u({\bf 0})\sigma) (x_{cs})
&=A_u(g^{-1}(x_{cs}))\sigma(g^{-1}(x_{cs})) + D\tilde{f}_{g^{-1}(x_{cs})}(0) \sigma(g^{-1}(x_{cs}))
\nonumber\\
&=\partial_{x_u}(\pi_u\tilde F)(g^{-1}(x_{cs}))\sigma(g^{-1}(x_{cs}))=(\mathbb{A}_u \sigma)(x_{cs}),
\end{align*}
implying that $D\mathbb{G}_u({\bf 0})=\mathbb{A}_u$, because of \eqref{Auu-g}
and \eqref{DG0A}.

Moreover, by \eqref{xu-F} again we have
\begin{align*}
\|D\mathbb{G}_u(\eta_1)\sigma-D\mathbb{G}_u(\eta_2)\sigma\|_\infty
&= \sup_{x_{cs}\in X_{cs}} \big\| D\tilde{f}_{g^{-1}(x_{cs})}(\eta_1(g^{-1}(x_{cs})))   \sigma(g^{-1}(x_{cs}))
\\
&\qquad   - D\tilde{f}_{g^{-1}(x_{cs})}(\eta_2(g^{-1}(x_{cs})))   \sigma(g^{-1}(x_{cs})) \big\|
\\
&\le M \sup_{x_{cs}\in X_{cs}} \| \eta_1(g^{-1}(x_{cs})) -\eta_2(g^{-1}(x_{cs}))\|^\beta \|\sigma(g^{-1}(x_{cs}))\|
\\
&\le M \|\eta_1-\eta_2\|_\infty^\beta \|\sigma\|_\infty, \quad \forall \eta_1,\eta_2, \sigma\in \mathcal{X}_\infty,
\end{align*}
and
\begin{align*}
\|D\mathbb{G}_u(\eta)\sigma-\mathbb{A}_u\sigma\|_\infty
= \sup_{x_{cs}\in X_{cs}}   \big\| D\tilde{f}_{g^{-1}(x_{cs})}(\eta(g^{-1}(x_{cs})))   \sigma(g^{-1}(x_{cs})) \big\|
\le \delta \|\sigma\|_\infty.
\end{align*}
Thus, $\mathbb{G}_u$ is $C^{1,\beta}$ whose nonlinearity has globally small bound.

Now, we obtain a result on differentiable linearization for $\mathbb{G}_u$.

\begin{lemma}\label{lem-u}
Let $\mathbb{G}_u$ and $\mathbb{A}_u$ be given above. Then there exists a local homeomorphism $\mathbb{H}_u$ on  $\mathcal{X}_\infty$  such that $\mathbb{H}_u(0)=0$  and
\begin{align}\label{us-conj}
\mathbb{H}_u^{-1} \circ \mathbb{G}_u \circ  \mathbb{H}_u= \mathbb{A}_u.
\end{align}
Moreover, for any $\eta\in \mathcal{X}_\infty$ near the origin,
\begin{align}\label{HH-u}
\|\mathbb{H}_u^{\pm 1}(\eta)-\eta\|_\infty \le L \|\eta\|_\infty^{1+
\beta}
 %\quad \rm{where}~\beta\in (0,\alpha).
\end{align}
with a small constant $\beta>0$.
\end{lemma}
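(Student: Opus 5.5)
Lemma \ref{lem-u} will follow from the differentiable linearization theorem for expansive $C^{1,\beta}$ maps on Banach spaces in \cite[Theorem 7.1]{ZLZ-TAMS} (the purely unstable case). Its hypotheses are exactly the properties of $\mathbb{G}_u$ just established. First, $\mathbb{G}_u(\mathbf{0})=\mathbf{0}$ and $D\mathbb{G}_u(\mathbf{0})=\mathbb{A}_u$ is invertible. Second, by \eqref{u-sp}, $\Sigma(\mathbb{A}_u)\subset\bigcup_{i=k+1}^p[\lambda_i-\varsigma,\lambda_i+\varsigma]\subset(1,\infty)$. Third, the nonlinearity $\mathbb{G}_u-\mathbb{A}_u$ has derivative bounded by $\delta$ and $\beta$-Hölder derivative with constant $M$.

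Rather than merely citing the theorem, I would outline its proof in the present setting.

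\emph{Step 1: spectral splitting.} Using the spectral gaps in \eqref{u-sp}, split $\mathcal{X}_\infty=\mathbb{X}_{k+1}\oplus\cdots\oplus\mathbb{X}_p$ into $\mathbb{A}_u$-invariant spectral subspaces. Concretely, $\mathbb{X}_i$ consists of the $\eta$ with $\eta(x_{cs})$ in the $i$-th invariant subbundle, and on $\mathbb{X}_i$ the restriction satisfies $\|\mathbb{A}_u^n|_{\mathbb{X}_i}\|\le K(\lambda_i\pm\varsigma)^n$ for $n\gtrless 0$. The spectral bandwidth condition of \cite{ZLZ-TAMS} in the expanding case reads
\[
(\lambda_i+\varsigma)/(\lambda_i-\varsigma)<(\lambda_{k+1}-\varsigma)^{\beta}.
\]
It holds after shrinking $\varsigma$, since $\lambda_{k+1}>1$ is fixed and $\beta>0$; if necessary, decrease $\beta$ as well.

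\emph{Step 2: the conjugacy equation.} Look for $\mathbb{H}_u=\mathrm{id}+\mathbb{W}$, where $\mathbb{W}$ solves the conjugacy equation
\[
\mathbb{W}=\mathbb{A}_u^{-1}\big[\mathbb{G}_u\circ(\mathrm{id}+\mathbb{W})-\mathbb{A}_u\big]\circ\mathbb{A}_u^{-1}+\mathbb{A}_u^{-1}\,\mathbb{W}\circ\mathbb{A}_u^{-1}\cdots.
\]
Rather than work with this equation directly, I would use the standard Hartman-type fixed-point formulation: since $\mathbb{A}_u^{-1}$ is a contraction, write
\[
\mathbb{W}=\sum_{n\ge0}\mathbb{A}_u^{-n-1}\,\mathbb{N}\circ(\mathrm{id}+\mathbb{W})\circ\mathbb{A}_u^{n},
\]
with $\mathbb{N}:=\mathbb{G}_u-\mathbb{A}_u$ suitably cut off. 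This is solved by contraction in the weighted space
\[
\Big\{\mathbb{W}:\ \sup_{\eta\ne0}\|\mathbb{W}(\eta)\|_\infty/\|\eta\|_\infty^{1+\beta}<\infty\Big\}.
\]
The key estimate is $\|\mathbb{N}(\xi)\|_\infty\le M\|\xi\|_\infty^{1+\beta}$, which follows from $D\mathbb{N}(\mathbf{0})=0$ and the $\beta$-Hölder bound on $D\mathbb{N}$.

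\emph{Main obstacle: convergence of the series.} The $n$-th term is bounded by
\[
(\lambda_{k+1}-\varsigma)^{-n}(\lambda_p+\varsigma)^{n(1+\beta)}\|\eta\|_\infty^{1+\beta},
\]
which need not decay when the eigenvalue moduli are spread out. This is precisely why Step 1 is needed: the decomposition is done block by block, as in \cite{ZLZ-TAMS}. One first conjugates along the slowest block $\mathbb{X}_{k+1}$ via an invariant-foliation-type reduction. Then one proceeds inductively, using on each block the estimate
\[
\big(\lambda_i+\varsigma\big)^{1+\beta}/(\lambda_i-\varsigma)\cdot(\text{rate})^{-1}<1,
\]
which is the bandwidth inequality above. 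I expect this to be the most delicate step, but it is handled verbatim by \cite[Sections 4--7]{ZLZ-TAMS}, since $\mathcal{X}_\infty$ is a Banach space and nothing finite-dimensional is used there.

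\emph{Step 3: invertibility.} Show that $\mathbb{H}_u$ is a local homeomorphism: $\mathbb{W}$ is small and Lipschitz near $\mathbf{0}$. For the estimate \eqref{HH-u} on the inverse, write $\mathbb{H}_u^{-1}(\eta)-\eta=-\mathbb{W}(\mathbb{H}_u^{-1}(\eta))$ and use $\|\mathbb{H}_u^{-1}(\eta)\|_\infty\le 2\|\eta\|_\infty$. Together with the bound on $\mathbb{W}$, this yields both \eqref{us-conj} and \eqref{HH-u}.
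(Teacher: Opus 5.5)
Your load-bearing argument is the same as the paper's. The paper's proof of this lemma only observes that $(\lambda_i+\varsigma)/(\lambda_i-\varsigma)<(\lambda_{k+1}-\varsigma)^\beta$ holds for $i=k+1,\dots,p$, and then applies \cite[Lemma 6.1]{ZLZ-TAMS} to $\mathbb{G}_u$. It cites Lemma 6.1 rather than Theorem 7.1, which is the hyperbolic statement. Your list of verified hypotheses is a more explicit version of that check.

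The self-contained sketch you add would not stand on its own, and you should not present it as the proof.

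First, decreasing $\beta$ can only hurt the bandwidth inequality, since $(\lambda_{k+1}-\varsigma)^\beta\downarrow 1$ as $\beta\downarrow 0$. What is needed is $\varsigma$ small relative to the given H\"older exponent.

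Second, the forward series $\sum_{n\ge0}\mathbb{A}_u^{-n-1}\mathbb{N}\circ(\mathrm{id}+\mathbb{W})\circ\mathbb{A}_u^{n}$ (which also lacks a minus sign) cannot be closed in the $(1+\beta)$-weighted space for an expansion at all. This is not only a problem when the moduli are spread out. Already on a single block, the $n$-th term is only bounded by $\big((\lambda_i+\varsigma)^{1+\beta}/(\lambda_i-\varsigma)\big)^n\|\eta\|_\infty^{1+\beta}$, and this ratio exceeds $1$.

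The $O(\|\eta\|_\infty^{1+\beta})$ bound instead comes from the backward iteration $\mathbb{W}(\eta)=\sum_{n\ge1}\mathbb{A}_u^{n-1}\mathbb{N}\big((\mathrm{id}+\mathbb{W})(\mathbb{A}_u^{-n}\eta)\big)$. These are limits of the type $\lim_{n\to\infty}\mathbb{A}_\ell^{n}\pi_\ell\mathbb{G}_u^{-n}$ recalled in Section~\ref{Sec-7}. Your ``spread out'' concern belongs here. Without splitting, the ratio would be $(\lambda_p+\varsigma)(\lambda_{k+1}-\varsigma)^{-1-\beta}$, which fails in general. After the weak-unstable manifold $\mathcal{M}_\ell$ is straightened, the $\ell$-th step is controlled by $(\lambda_\ell+\varsigma)(\lambda_\ell-\varsigma)^{-1}(\lambda_{k+1}-\varsigma)^{-\beta}$. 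This is $<1$ exactly by the bandwidth condition.

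Third, nothing makes $\mathbb{W}$ Lipschitz near $\mathbf{0}$. The conjugacy is only $C^0$, and in \cite{ZLZ-TAMS} its invertibility comes from the construction itself (each factor $\phi_\ell$, $\varphi_\ell$ is inverted). It does not come from a small Lipschitz perturbation of the identity. Your derivation of the bound for $\mathbb{H}_u^{-1}$ from the one for $\mathbb{H}_u$ is fine once $\mathbb{H}_u^{-1}$ is known to be continuous at $\mathbf{0}$.

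So the lemma stands, but only through the citation, exactly as in the paper.
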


\begin{proof}[Proof of Lemma {\rm \ref{lem-u}}.]
Note that the spectral bandwidth condition 
$$
(\lambda_i + \varsigma)/(\lambda_i-\varsigma)< (\lambda_{k+1}-\varsigma)^\beta
$$ 
holds automatically for every $i=k+1,\cdots,p$.
Then, one can apply \cite[Lemma 6.1]{ZLZ-TAMS} to obtain the local $C^0$ linearization for $\mathbb{G}_u$,
  where the conjugacy $\mathbb{H}_u$ and its inverse $\mathbb{H}_u^{-1}$ satisfy   \eqref{HH-u}.
This completes the proof.
\end{proof}

{\em Step 2.
Construct a transformation $\Phi$ via $\mathbb{H}_u$ and verify its invertibility.
}

Fixing arbitrarily a point $x=(x_{cs},x_u)\in \mathbb{R}^d$ and a mapping
$\zeta_x \in \mathcal{X}_\infty$ such that $\zeta_x(x_{cs}) = x_u$,
we define $\Phi:\mathbb{R}^d\to \mathbb{R}^d$ by
\begin{align}\label{P-H1}
\Phi(x):=(x_{cs},\mathbb{H}_u(\zeta_x)(x_{cs})),
\end{align}
which is continuous since both $\mathbb{H}_u$ and $\zeta_x$ are continuous.
Then, by \eqref{Auu-g}, \eqref{Fuu}, \eqref{us-conj} and \eqref{P-H1}, one can verify that
\begin{align}\label{verf-conj}
\tilde{F}\circ \Phi(x)=&~ \tilde{F} (x_{cs},\mathbb{H}_u(\zeta_x)(x_{cs}))
= (g(x_{cs}),G_{x_{cs}}(\mathbb{H}_u(\zeta_x)(x_{cs})))
\nonumber\\
=&~ (g(x_{cs}),\mathbb{G}_u \circ\mathbb{H}_u(\zeta_x)(g(x_{cs})))
= (g(x_{cs}),\mathbb{H}_u(\mathbb{A}_u \zeta_x)(g(x_{cs})))
\nonumber\\
=&~ \Phi(g(x_{cs}),(\mathbb{A}_u \zeta_x)(g(x_{cs})))=\Phi(g(x_{cs}),A_u(g^{-1}(\pi_c g(x_{cs})))x_u)
\nonumber\\
=&~\Phi(g(x_{cs}),A_u(x_c)x_u),\quad \forall x\in U,
\end{align}
where $\pi_c g(x_{cs})=g(x_c)$, as known below \eqref{B-conju}.
This proves \eqref{PSI-F}.

Next, we show that the definition of $\Phi(x)$ does not depend on $\zeta_x\in \mathcal{X}_\infty$, i.e., if $\zeta_1,\zeta_2\in \mathcal{X}_\infty$ satisfy $\zeta_1(x_{cs})=\zeta_2(x_{cs})=x_u$, then
\begin{align}\label{Hu=Hu}
\mathbb{H}_u(\zeta_1)(x_{cs})=\mathbb{H}_u(\zeta_2)(x_{cs}).
\end{align}
For this purpose, we recall from the proof of \cite[Lemma 6.2]{ZLZ-TAMS} that $\mathbb{G}_u$ can be linearized in $(p-k)$ steps, each of which linearizes the
$\ell$-th component of $\mathbb{G}_u$ (according to the   spectral decomposition) by the following transformations
$$
\text{$\phi_{\ell}$ \, and \, $\varphi_\ell:=\lim_{n\to \infty} \mathbb{A}_\ell^{n} \pi_\ell \mathbb{G}_u^{-n}$,   \, $\forall \ell=k+1,...,p,$}
$$
where $\phi_\ell$ straightens up the weak-unstable manifold $\mathcal{M}_{\ell}$ of $\mathbb{G}_u$ tangent to the subspace corresponding to the spectrum $\bigcup_{i=k+1}^{\ell-1} [\lambda_i-\varsigma,\lambda_i+\varsigma]$, and $\mathbb{A}_\ell$
and $\pi_\ell \mathbb{G}_\ell$ are the $\ell$-th component of $\mathbb{A}_u$ and $\mathbb{G}_u$ (with some modifications in each step), respectively.

Then, we can see that the restriction of $\mathcal{M}_{\ell}$ onto the unstable leaf ${\mathcal W}_u(x_{cs})$ (i.e., the fiber with the base point $x_{cs})$ is actually the leaf ${\mathcal W}_\ell(x_{cs})$ of a weak-unstable foliation of $\tilde{F}$ passing through $x_{cs}$, i.e.,
\begin{align}\label{M-W}
(\mathcal{M}_{\ell})_{x_{cs}}:=\mathcal{M}_{\ell}|_{{\mathcal W}_u(x_{cs})}={\mathcal W}_\ell(x_{cs}).
\end{align}
Indeed, \eqref{barF-FP}, \eqref{Fuu} and the fact $\zeta_x(x_{cs}) = x_u$ give
\begin{align*}
\mathbb{G}_{u}^{n}(\zeta_x)(g^n(x_{cs}))
=
G_{g^{n-1}(x_{cs})}\circ G_{g^{n-2}(x_{cs})}\circ \cdots \circ G_{x_{cs}}(x_u)
=\tilde{F}^n(x)-g^n(x_{cs})
\end{align*}
for all $n\ge 0$, which implies that
$
\varrho_{\ell}^{-n}\|\tilde{F}^n(x)-g^n(x_{cs})\|\le \varrho_{\ell}^{-n}\|\mathbb{G}_{u}^{n}(\zeta_x)\|_{\infty}
$
with $\varrho_{\ell}\in (\lambda_{\ell}+\varsigma,\lambda_{\ell+1}-\varsigma)$. By a definition for the weak-unstable foliation, which is similar to
\eqref{SF-1}, we know that
${\mathcal W}_\ell(x_{cs})\subset (\mathcal{M}_{\ell})_{x_{cs}}$, and therefore \eqref{M-W} holds because ${\mathcal W}_\ell(x_{cs})$ is the graph of a mapping which is unique. Hence, the restriction of $\phi_\ell$ onto ${\mathcal W}_u(x_{cs})$ is only determined by ${\mathcal W}_\ell(x_{cs})$, which implies that
$$
\phi_\ell(\zeta_1)(x_{cs})=\phi_\ell(\zeta_2)(x_{cs})
$$
for any $\zeta_1,\zeta_2\in \mathcal{X}_\infty$ such that $\zeta_1(x_{cs})=\zeta_2(x_{cs})$.

Moreover, for $\varphi_\ell$, we note that
\begin{align*}
  \varphi_\ell(\zeta_x)(x_{cs})= \lim_{n\to \infty}
A_{\ell}(x_c) \cdots A_{\ell}(g^{-n+1}(x_c)) \pi_{\ell}G_{g^{-n+1}(x_{cs})}^{-1} \circ \cdots \circ G_{x_{cs}}^{-1}(x_u).
\end{align*}
It implies that, for any $\zeta_1,\zeta_2\in \mathcal{X}_\infty$ such that $\zeta_1(x_{cs})=\zeta_2(x_{cs})=x_u$, we have
$$
\varphi_\ell(\zeta_1)(x_{cs})=\varphi_\ell(\zeta_2)(x_{cs}).
$$
All the above discussion indicates  that \eqref{Hu=Hu} holds since
$\mathbb{H}_u$ is the composition of $\phi_\ell$ and $\varphi_\ell$.

Now, we define $\Phi^{-1}(x):=(x_{cs},\mathbb{H}_u^{-1}(\zeta_x)(x_{cs}))$ and verify from \eqref{P-H1} that
\begin{align*}
\Phi \circ \Phi^{-1}(x)=&~\Phi(x_{cs},\mathbb{H}_u^{-1}(\zeta_x)(x_{cs}))
=(x_{cs},\mathbb{H}_u \circ \mathbb{H}_u^{-1}(\zeta_x)(x_{cs}))
\\
=&~ (x_{cs},x_u)=x,
\end{align*}
which implies that $\Phi$ is a homeomorphism.

{\em Step 3. Verify the regularity of $\Phi$.}

For a fixed $x=(x_{cs},x_u)\in \mathbb{R}^d$, we define a mapping $i_{x}\in \mathcal{X}_\infty$ by
$$
i_{x}(y_{cs}):=e^{-\|y_{cs}-x_{cs}\|}x_u,\quad \forall y_{cs}\in X_{cs},
$$
which satisfies that $\|i_x\|_\infty=\|x_u\|$.
Define
\begin{align}\label{Phi-uu}
\Phi(x):=(x_{cs},\mathbb{H}_u(i_{x})(x_{cs})),
\end{align}
which is the same as the $\Phi$ defined in \eqref{P-H1} since $i_x(x_{cs})=\zeta_x(x_{cs})=x_u$ by \eqref{Hu=Hu}. Then, by \eqref{HH-u} and \eqref{Phi-uu}, we can check that
\begin{align*}
\frac{\|\Phi(x)-x\|}{\|x-x_{cs}\|^{1+\beta}}
&
= \frac{\|(x_{cs},\mathbb{H}_u(i_{x})(x_{cs}))-(x_{cs},x_u)\|}{\|x_u\|^{1+\beta}}
\\
&= \frac{\|\mathbb{H}_u(i_{x})(x_{cs})-i_x(x_{cs})\|}{\|x_u\|^{1+\beta}}
 \le \frac{\|\mathbb{H}_u(i_{x})-i_{x}\|_\infty}{\|i_{x}\|_\infty^{1+\beta}}\le L
\end{align*}
near the origin for $x\ne x_{cs}$.

Finally, for any  $\tilde{x}_{c}\in X_{c}$, since
$\|x-x_{cs}\|\le \|x-x_c\| \le \|x-\tilde{x}_{c}\|$, we see that
\begin{align*}
\frac{\|\Phi(x)-x\|}{\|x-\tilde{x}_{c}\|^{1+\beta}}
\le   \frac{ L \|x-x_{cs}\|^{1+\beta}}{\|x-\tilde{x}_{c}\|^{1+\beta}}
 \le L,
\end{align*}
which proves \eqref{Dscu-9} for $\Phi$. Similarly, one can prove \eqref{Dscu-9} for $\Phi^{-1}$ and
this completes the proof.
\end{proof}

Remark that,  through \eqref{Auu-g}, the equality \eqref{barF-Ac} obtained in Lemma \ref{co-redu} plays a key role in \eqref{verf-conj}. Otherwise, without \eqref{barF-Ac}, we could only verify that 
$$
\tilde{F}\circ \Phi(x)
=\Phi(g(x_{cs}),A_u(x_{cs})x_u)
$$ 
holds. This would be inconsistent with the Takens’ normal form \eqref{Co-H}, whose linear part only depends on $x_c$.

\appendix
\section{}\label{Appen-A}

In this appendix,   we introduce the Whitney extension theorem   and a lemma for estimating the H\"{o}lder exponent of   functional series.

For $\mathbf{j}:=(j_1,...,j_d)$, $\mathbf{k}:=(k_1,...,k_d)$,
let $\mathbf{j}!:=j_1 ! \cdots j_d !$, $|\mathbf{j}|:=j_1+\cdots +j_d$
and $z^{\mathbf{j}}:=z_1^{j_1}\cdots z_d^{j_d}$, where $d\in\mathbb{N}$,
$j_i,k_i\ge 0$ are integers for all $i=1,...,d$ and $z:=(z_1,...,z_d)\in\mathbb{R}^d$.
For a constant $r\in\mathbb{N}$ and a function $w:\Omega \to \mathbb{R}$, where $\Omega\subset\mathbb{R}^d$ is a closed subset,
suppose that there exists a family of functions $w_{\mathbf{j}}:\Omega\to \mathbb{R}$, $0<|\mathbf{j}|\le r$, such that
\begin{align}\label{W.1} 
 w_{\mathbf{0}}=w, \quad w_{\mathbf{j}}(x)=\sum_{|\mathbf{j}+\mathbf{k}|\le r} \frac{w_{\mathbf{j}+\mathbf{k}}(y)}{\mathbf{k}!}(x-y)^{\mathbf{k}}+R_{\mathbf{j}}(x,y)
\end{align}
and
\begin{align}\label{W.2} 
  |w_{\mathbf{j}}(x) | \le N, \quad    |R_{\mathbf{j}}(x,y)|\le N \|x-y\|^{r+\alpha-\mathbf{j}}, \quad \forall x,y\in \Omega,
\end{align}
with $\alpha\in (0,1]$. Then in view of \cite[Theorem 4]{St-Book}, we have the following lemma.

\begin{lemma}\label{Exten-lem}
Let the functions $w:\Omega\to \mathbb{R}$ and $w_{\mathbf{j}}:\Omega\to \mathbb{R}$, $0\le |\mathbf{j}| \le r$, be given above such that \eqref{W.1} and \eqref{W.2} hold. Then there is a $C^{r,\alpha}$ function $W:\mathbb{R}^d \to \mathbb{R}$ such that
$$
W|_{\Omega}=w, \quad D^{\mathbf{j}}W|_{\Omega}=w_{\mathbf{j}}, \quad 0<|\mathbf{j}|\le r,
$$
where $D^{\mathbf{j}}W:=\partial^{\mathbf{j}}W(x_1,...,x_d)/\partial x_{j_1}\cdots \partial x_{j_d}$.
\end{lemma}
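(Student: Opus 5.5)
This statement is the classical Whitney extension theorem of order $r$ with Hölder remainder, as in \cite[Theorem 4]{St-Book}. The plan follows Stein's construction. Decompose the open set $\mathbb{R}^d\setminus\Omega$ into Whitney cubes $\{Q_k\}$. Each cube satisfies $\mathrm{diam}\,Q_k\sim\mathrm{dist}(Q_k,\Omega)$, and the enlarged cubes $Q_k^*$ have bounded overlap. Take a smooth partition of unity $\{\varphi_k^*\}$ subordinate to $\{Q_k^*\}$ with
\[
|\partial^{\mathbf{j}}\varphi_k^*|\le C_{\mathbf j}(\mathrm{diam}\,Q_k)^{-|\mathbf j|}.
\]
For each $k$, pick a point $p_k\in\Omega$ nearest to $Q_k$. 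Define $W=w$ on $\Omega$, and off $\Omega$ set
\[
W(x)=\sum_k \varphi_k^*(x)\,P(x,p_k),\qquad P(x,y):=\sum_{|\mathbf j|\le r}\frac{w_{\mathbf j}(y)}{\mathbf j!}(x-y)^{\mathbf j}.
\]
One only sums over cubes with $\mathrm{diam}\,Q_k\le 1$, and glues to $0$ far from $\Omega$, which keeps $W$ bounded using $|w_{\mathbf j}|\le N$.

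The key steps, in order, are as follows.

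\emph{Step 1.} Check consistency of the Taylor polynomials. By \eqref{W.1}--\eqref{W.2}, for $y,y'\in\Omega$ one has
\[
\partial_x^{\mathbf j}\bigl(P(x,y)-P(x,y')\bigr)=O\bigl((\|x-y\|+\|y-y'\|)^{r+\alpha-|\mathbf j|}\bigr).
\]
This comes from expanding $P(\cdot,y')$ around $y$ and substituting $R_{\mathbf j}$.

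\emph{Step 2.} Estimate derivatives off $\Omega$. For $x\notin\Omega$ and a nearest point $a\in\Omega$, write
\[
W(x)-P(x,a)=\sum_k\varphi_k^*(x)\bigl(P(x,p_k)-P(x,a)\bigr).
\]
Differentiate with Leibniz's rule. Combined with Step 1 and $\|p_k-a\|\lesssim\mathrm{dist}(x,\Omega)$, this gives
\[
|\partial^{\mathbf j}W(x)-\partial^{\mathbf j}_xP(x,a)|\le C\,\mathrm{dist}(x,\Omega)^{r+\alpha-|\mathbf j|}.
\]
The derivatives of $\varphi_k^*$ lose powers of $\mathrm{diam}\,Q_k$, but these are exactly compensated by the remainder gains.

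\emph{Step 3.} Show $W\in C^r$ with $D^{\mathbf j}W|_\Omega=w_{\mathbf j}$. At boundary points, first-order differentiability of $\partial^{\mathbf j}W$ follows from Step 2 together with \eqref{W.1} when the other point lies in $\Omega$. Continuity of $\partial^{\mathbf j}W$ up to $\Omega$ follows in the same way.

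\emph{Step 4.} Prove the Hölder bound for the top derivatives:
\[
|\partial^{\mathbf j}W(x)-\partial^{\mathbf j}W(y)|\le C\|x-y\|^{\alpha},\qquad |\mathbf j|=r.
\]
Split into cases according to whether $\|x-y\|$ is small or large compared with $\delta:=\min(\mathrm{dist}(x,\Omega),\mathrm{dist}(y,\Omega))$.
\begin{itemize}
\item If $\|x-y\|\le c\,\delta$, the segment stays in the complement. Bound $|\partial^{\mathbf j+e_i}W|\lesssim\delta^{\alpha-1}$ using Step 2 with order $r+1$; note that $\partial^{r+1}_xP=0$. The mean value theorem then gives $\delta^{\alpha-1}\|x-y\|\le\|x-y\|^\alpha$.
\item Otherwise, compare both $\partial^{\mathbf j}W(x)$ and $\partial^{\mathbf j}W(y)$ with $w_{\mathbf j}$ at nearest points $a,b\in\Omega$. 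Use Step 2, then \eqref{W.2} for $w_{\mathbf j}(a)-w_{\mathbf j}(b)$. The bound $\|a-b\|\lesssim\|x-y\|$ holds in this regime.
\end{itemize}

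The main obstacle is this last case analysis, together with the Leibniz bookkeeping in Step 2: the powers of $\mathrm{diam}\,Q_k$ must cancel exactly to produce the exponent $r+\alpha-|\mathbf j|$. The rest is routine. Vector-valued versions, as used in Step 3 of the proof of Lemma \ref{co-redu}, follow componentwise.
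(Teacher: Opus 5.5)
Your proposal is correct and follows the paper's approach. The paper gives no argument of its own; it simply invokes \cite[Theorem 4]{St-Book}, and your outline (Whitney cubes, $W=\sum'\varphi_k^*(x)P(x,p_k)$ over cubes of diameter at most $1$, the consistency estimate for the Taylor polynomials, the Leibniz bookkeeping giving $\mathrm{dist}(x,\Omega)^{r+\alpha-|\mathbf j|}$, and the two-case H\"older argument) is Stein's proof of that theorem. One point to tighten: the identity $W(x)-P(x,a)=\sum_k\varphi_k^*(x)\bigl(P(x,p_k)-P(x,a)\bigr)$ in your Step 2 holds only where the truncated partition of unity still sums to $1$ near $x$, i.e.\ within a fixed distance of $\Omega$, so farther out you should bound the derivatives directly using $|w_{\mathbf j}|\le N$, as Stein does.
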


\begin{lemma}\label{Holder-lem}\cite[Lemma 4]{ZZJ-MA}
Let $(Y,\|\cdot\|)$ be a Banach space and $\Omega\subset Y$ be a small neighborhood of the origin $O\in Y$.
Assume that $\Gamma_k:\Omega\to Y$ are $C^{0,\alpha}$ mappings for all integers $k\ge 0$, where $\alpha\in (0,1]$, and that $\tau_1,\tau_2$ are
positive constants such that
$$
\|\Gamma_k(y)\| \le C \tau_1^k, \quad \|\Gamma_k(y)-\Gamma_k(\tilde{y})\| \le C \tau_2^k \|y-\tilde{y}\|^\alpha, \quad \forall y,\tilde{y}\in \Omega.
$$
If there exists a $\rho>0$ such that $\rho\tau_1<1$, then
$$
\sum_{k=0}^\infty \rho^k \|\Gamma_k(y)-\Gamma_k(\tilde{y})\| \le C \|y-\tilde{y}\|^{\bar{\beta}}
$$
near $O$, where
$$
\bar{\beta}:=
\begin{cases}
\alpha, \quad &\rho\tau_2<1,
\\
\alpha-\epsilon, \quad & \rho\tau_2=1,
\\
(\log\tau_1+\log\rho)(\log\tau_1-\log\tau_2)^{-1}\alpha, \quad & \rho\tau_2>1,
\end{cases}
$$
defined for an arbitrarily small given $\epsilon>0$, is a positive constant.
\end{lemma}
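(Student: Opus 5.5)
The estimate is a summation of a geometric-times-Hölder series; I would prove it by interpolating the two given bounds on each term and splitting the series at a cutoff chosen according to $\|y-\tilde y\|$. Fix $y,\tilde y\in\Omega$ and set $r:=\|y-\tilde y\|$, which is small since $\Omega$ is a small neighborhood of $O$. For each $k$ there are two competing bounds,
\begin{align*}
\|\Gamma_k(y)-\Gamma_k(\tilde y)\|\le 2C\tau_1^k \quad\text{and}\quad \|\Gamma_k(y)-\Gamma_k(\tilde y)\|\le C\tau_2^k r^\alpha .
\end{align*}
The first comes from the triangle inequality and the uniform bound. The second is the Hölder bound.

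In the case $\rho\tau_2<1$ I would simply sum the Hölder bound: $\sum_k\rho^k C\tau_2^k r^\alpha=C(1-\rho\tau_2)^{-1}r^\alpha$, which gives $\bar\beta=\alpha$. In the case $\rho\tau_2=1$, I would pick $\tilde\tau_2$ with $\tau_2<\tilde\tau_2$ but close to it, so that $\rho\tilde\tau_2>1$. The inequality still holds with $\tau_2$ replaced by $\tilde\tau_2$, so this case reduces to the third case. The resulting exponent is $\frac{\log\tau_1+\log\rho}{\log\tau_1-\log\tilde\tau_2}\alpha$, which tends to $\alpha$ as $\tilde\tau_2\downarrow\tau_2$, because $\log\rho=-\log\tau_2$. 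For a suitable choice this gives $\alpha-\epsilon$.

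The main case is $\rho\tau_2>1$; note that necessarily $\tau_2>\tau_1$. I would choose an integer cutoff $N\ge 0$ and use the Hölder bound for $k<N$ and the uniform bound for $k\ge N$:
\begin{align*}
\sum_{k=0}^\infty\rho^k\|\Gamma_k(y)-\Gamma_k(\tilde y)\|\le C r^\alpha\sum_{k<N}(\rho\tau_2)^k+2C\sum_{k\ge N}(\rho\tau_1)^k\le C'\bigl((\rho\tau_2)^N r^\alpha+(\rho\tau_1)^N\bigr).
\end{align*}
The two terms balance when $(\tau_2/\tau_1)^N\approx r^{-\alpha}$. So I would take $N$ to be the integer part of $-\alpha\log r/(\log\tau_2-\log\tau_1)$; this is nonnegative because $r$ is small. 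Then $(\rho\tau_1)^N\le C r^{\gamma}$ with
\begin{align*}
\gamma=\frac{\alpha(\log\rho+\log\tau_1)}{\log\tau_1-\log\tau_2},
\end{align*}
which is positive since $\rho\tau_1<1$ and $\tau_1<\tau_2$. Likewise, $(\rho\tau_2)^N r^\alpha=(\rho\tau_1)^N(\tau_2/\tau_1)^N r^\alpha\le (\rho\tau_1)^N$ up to a constant. The only delicate point is the rounding of $N$ to an integer, and it costs only a multiplicative constant such as $\max(1,\rho\tau_1)^{-1}$.

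Combining the three cases gives the stated $\bar\beta$, with the constant depending only on $C,\rho,\tau_1,\tau_2,\alpha$, and $\epsilon$.
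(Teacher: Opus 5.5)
Your argument is correct. It is essentially the standard proof of this lemma in \cite[Lemma 4]{ZZJ-MA}, which the paper only cites: split the series at a cutoff $N$ with $(\tau_2/\tau_1)^N\approx\|y-\tilde y\|^{-\alpha}$, use the H\"older bound for $k<N$ and the uniform bound $2C\tau_1^k$ for $k\ge N$, and treat the borderline case $\rho\tau_2=1$ either by your trick of enlarging $\tau_2$ or by the direct estimate $N\|y-\tilde y\|^\alpha\lesssim\|y-\tilde y\|^{\alpha}|\log\|y-\tilde y\||$.

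One trivial slip: rounding $N$ down costs the factor $(\rho\tau_1)^{-1}$, not $\max(1,\rho\tau_1)^{-1}$, which equals $1$ since $\rho\tau_1<1$.
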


%%%%%%%%%%%%%%%%%%%%%%%%%%
\section{}\label{Appen-B}

\begin{proof}[Proof of Lemma {\rm \ref{lm-diagC}}.]
We divide the proof of the lemma into three steps.

{\em Step 1.   Review the classical smooth stable {\rm (}resp. unstable{\rm )} foliation on $\mathbb R^d$.} 
%the stable-center manifold $X_{cs}$ {\rm (}resp. the center-unstable manifold $X_{cu}${\rm )}.}

Let $F$ be a $C^{1,\alpha}$ diffeomorphism such that  \eqref{Df-hold-1} and \eqref{sp-A} hold.  
We only discuss the stable foliation of $F$ on $\mathbb R^d$ 
%defined on $X_{cs}$, 
as 
the discussion for the unstable one is similar, and see that there is a $C^{0}$ mapping $h_s:\mathbb R^d \times X_s \to X_{cu}$ such that the leaves can be given by 
\begin{align*}
\mathcal{W}_s(x)=\{z_s+ h_s(x,z_s):z_s\in X_s\},  \quad \forall x \in \mathbb R^d.
\end{align*}
In fact,   consider the following classical Lyapunov-Perron equation associated with the stable  foliation on $\mathbb R^d$ (cf. \cite{CHT-JDE}): 
\begin{align}\label{LP-cs-MA}
p_n(x,z_s)=&~ A_s^n (z_s-\pi_s x) 
+\sum_{k=0}^{n-1} A_s^{n-k-1}
  \{ \pi_s  f(F^k(x)+p_k(x,z_s))- \pi_s f(F^k(x)) \}
\notag \\
&~ - \sum_{k=n}^\infty A_{cu}^{n-k-1} 
 \{ \pi_{cu}  f(F^k(x)+p_k(x,z_s))- \pi_{cu} f(F^k(x)) \}, \quad \forall n\ge 0.
\end{align}
Using the same arguments as in \cite[Section 2]{CHT-JDE}, 
we can obtain that \eqref{LP-cs-MA} has a unique solution
$(p_n(x,z_s))_{n\ge 0}$ such that $p_n(x,x_s)=0$ and
\begin{align}\label{con-diff-LP}
 \sup_{n\ge 0}\{\gamma_1^{-n}\|p_n(x,z_s)\|\}\le M, \quad 
 \sup_{n\ge 0} \{\gamma_1^{-n} \|\partial_{z_s} p_n(x,z_s)\|\}\le M,
\end{align}
where $M>0$ is a constant, and $\gamma_1 \in(\lambda_s^+,1-\varsigma)$ with small $\varsigma>0$.
Then, by applying \cite[Theorem 3.1(v)]{CHT-JDE},  the $C^0$ stable foliation of $g$  can be  defined by 
\begin{align}\label{def-hs}
h_s(x,z_s):=x_{cu}+\pi_{cu} p_0(x,z_s),    
\end{align}
which is $C^1$ in $z_s$ due to \eqref{con-diff-LP}. 

Next, we show that 
\begin{align}\label{ps0-sm-hold}
\| \partial_{z_s} p_0(x_c,0)-id_s\| \le \delta_p,   \quad 
\| \partial_{z_s} p_0(x_c,0) - \partial_{z_s} p_0(\tilde x_c,0) \| \le    \delta_p \|x_c-\tilde x_c\|^\alpha
\end{align}
for all $x_c,\tilde x_c\in X_c$, where $id_s$ denotes the identity mapping in $X_s$ and $\delta_p>0$ is sufficiently small.  
Indeed, taking the partial derivative on both sides of \eqref{LP-cs-MA}  with respect to $z_s$, we obtain
\begin{align}\label{s-pnp0}
\begin{split}
\partial_{z_s}p_n(x,z_s)
&= A_s^n + \sum_{k=0}^{n-1} A_s^{n-k-1} D(\pi_s f)(F^k(x)+p_k(x,z_s)) \partial_{z_s} p_k(x,z_s)
\\
&\quad - \sum_{k=n}^\infty A_{cu}^{n-k-1} D(\pi_{cu} f)(F^k(x)+p_k(x,z_s)) \partial_{z_s} p_k(x,z_s), \quad \forall n\ge 0.
%\\
%\partial_{z_s}p_0(x_{cs},z_s)
%&= id_s -  \sum_{k=0}^\infty A_c^{-k-1} D(\pi_c f)(g^k(x_{cs})+p_k(x_{cs},z_s)) \partial_{z_s} p_k(x_{cs},z_s), \quad   n=0,
\end{split}    
\end{align}
Then, we see from \eqref{s-pnp0} and the fact $p_k(x_{cu},0)=0$  that 
\begin{align}\label{p0-00}
\begin{split}
\partial_{z_s}p_n(x_c,0) &= A_s^n + \sum_{k=0}^{n-1} A_s^{n-k-1} D(\pi_s f)(F^k(x_{c}) ) \partial_{z_s} p_k(x_{c},0)
\\
&\quad - \sum_{k=n}^\infty A_{cu}^{n-k-1} D(\pi_{cu} f)(F^k(x_{c}) ) \partial_{z_s} p_k(x_{c},0), \quad \forall n\ge 0,
\end{split}
\end{align}
where the term $\sum_{k=0}^{n-1}$ vanishes when $n=0$. 
It follows from \eqref{Df-hold-1}, \eqref{g-center} and \eqref{con-diff-LP} that 
\begin{align*} 
 \|\partial_{z_s} p_0(x_{c},0)-id_s\| \le &~  \sum_{k=0}^\infty K (1-\varsigma)^{-k-1} \delta_f   \| \partial_{z_s} p_k(x_{c},0)\| 
\notag \\
\le&~   \delta_f KM/(1-\varsigma)  \sum_{k=0}^\infty  (\gamma_1/(1-\varsigma))^k \le \delta_p,   
\end{align*}
where $\delta_p>0$ is sufficiently small because $\delta_f$ is small enough. 
It proves the first inequality of \eqref{ps0-sm-hold}. Note that \eqref{g-center} is invoked in advance here since $A(x_c)$ is a small perturbation of $A$, and thus we may reasonably assume that $A$ also satisfies \eqref{ED}-\eqref{g-center} in this proof.
 
Furthermore, for any $x_c,\tilde x_c\in X_c$,   we see
 from \eqref{p0-00} that   for all $n\ge 0$, 
\begin{align}\label{leaf-difLp}
&\partial_{z_s} p_n(x_{c},0)-\partial_{z_s} p_n(\tilde x_{c},0)   
\notag \\
&= \sum_{k=0}^{n-1} A_s^{n-k-1} \Big\{  D(\pi_s f)(F^k(x_{c}))\partial_{z_s}p_k(x_{c},0) - D(\pi_s f)(F^k(\tilde x_{c}) )\partial_{z_s} p_k(\tilde x_{c},0)\Big\}
\notag \\
& \quad -  \sum_{k=n}^\infty A_{cu}^{n-k-1} \Big\{  D(\pi_{cu} f)(F^k(x_{c}))\partial_{z_s}p_k(x_{c},0) - D(\pi_{cu} f)(F^k(\tilde x_{c}) )\partial_{z_s} p_k(\tilde x_{c},0)\Big\}.
\end{align}
Choosing a $\gamma_2\in (\gamma_1,1-\varsigma)$ and by \eqref{con-diff-LP}, we have
$$
\sup_{n\ge 0}\{\gamma_2^{-n} \|\partial_{z_s} p_n(x_c,0)-\partial_{z_s} p_n(\tilde x_c,0)\|\}<\infty,
$$
and note that
$$
\|F^k(x_c)-F^k(\tilde x_c)\|\le \sup_{\xi\in X_c}\|DF^k(\xi)\|\,\|x_c-\tilde x_c\|\le (\lambda_u^+ +\delta_f)^k\|x_c-\tilde x_c\|,\quad \forall k\ge 0.
$$
Therefore,   \eqref{leaf-difLp}  together with \eqref{Df-hold-1}, \eqref{ED}-\eqref{g-center} and \eqref{con-diff-LP} yields 
\begin{align}\label{holder-psn}
& \gamma_2^{-n}     \|\partial_{z_s} p_n(x_c,0)-\partial_{z_s} p_n(\tilde x_c,0)\|
%%%%%%%%%%%%%%%%%%%%%%%%%%%%%%%%%%%%%%%%%%%%%%%%%%%%%%%%
\notag \\
&\le  \gamma_2^{-n} \Bigg\{ \sum_{k=0}^{n-1} \|A_s^{n-k-1} \{ D(\pi_s f)(F^k(x_{c}) )-D(\pi_s f)(F^k(\tilde x_{c}) )   \} \partial_{z_s} p_k(x_{c},0)  \|
\notag \\
&\quad + \sum_{k=0}^{n-1} \|A_s^{n-k-1} D(\pi_s f)(F^k(\tilde x_{c}) ) \{ \partial_{z_s} p_k(x_{c},0) -\partial_{z_s} p_k(\tilde x_{c},0) \}\|
\notag \\
&\quad + \sum_{k=n}^\infty \|A_{cu}^{n-k-1} \{ D(\pi_{cu} f)(F^k(x_{c}) )-D(\pi_{cu} f)(F^k(\tilde x_{c}) )   \} \partial_{z_s} p_k(x_{c},0)  \|
\notag \\
&\quad + \sum_{k=n}^\infty \|A_{cu}^{n-k-1} D(\pi_{cu} f)(F^k(\tilde x_{c}) ) \{ \partial_{z_s} p_k(x_{c},0) -\partial_{z_s} p_k(\tilde x_{c},0) \}\| \Bigg\}
%%%%%%%%%%%%%%%%%%%%%%%%%%%%%%%%%%%%%%%%%%%%%%%%%%%%%%%%
\notag \\
&\le \gamma_2^{-1} \Bigg\{ \sum_{k=0}^{n-1} K \left(\frac{\lambda_s^+}{\gamma_2}\right)^{n-k-1} \delta_f \|F^k(x_c)-F^k(\tilde x_c)\|^\alpha (\gamma_1/\gamma_2)^k\gamma_1^{-k} \|\partial_{z_s} p_k(x_c,0)\| 
\notag \\
&\quad +   \sum_{k=0}^{n-1} K \left(\frac{\lambda_s^+}{\gamma_2}\right)^{n-k-1} \delta_f \gamma_2^{-k} \|\partial_{z_s} p_k(x_{c},0) -\partial_{z_s} p_k(\tilde x_{c},0)\| 
\notag \\
&\quad + \sum_{k=n}^\infty K \left(\frac{1-\varsigma}{\gamma_2}\right)^{n-k-1} \delta_f \|F^k(x_c)-F^k(\tilde x_c)\|^\alpha (\gamma_1/\gamma_2)^k\gamma_1^{-k} \|\partial_{z_s} p_k(x_c,0)\| 
\notag \\
&\quad +  \sum_{k=n}^\infty K \left(\frac{1-\varsigma}{\gamma_2}\right)^{n-k-1} \delta_f \gamma_2^{-k} \|\partial_{z_s} p_k(x_{c},0) -\partial_{z_s} p_k(\tilde x_{c},0)\| \Bigg\}
%%%%%%%%%%%%%%%%%%%%%%%%%%%%%%%%%%%%%%%%%%%%%%%%%%%%%%%%
\notag \\
&\le C\delta_f \sum_{k=0}^\infty \left(\frac{(\lambda_u^+ +\delta_f)^\alpha \gamma_1}{\gamma_2}\right)^k \|x_c-\tilde x_c\|^\alpha 
+ C\delta_f \sup_{n\ge 0}
\{\gamma_2^{-n}  \|\partial_{z_s} p_n(x_c,0)-\partial_{z_s} p_n(\tilde x_c,0)\|\},
\end{align}
where $C>0$ is a constant and $(\lambda_u^+ +\delta_f)^\alpha \gamma_1<\gamma_2$ since $\gamma_1<\gamma_2$ and 
$(\lambda_u^+ +\delta_f)^\alpha$ is close to $1$ as $\alpha>0$ is small.
Therefore, \eqref{holder-psn} gives
\begin{align*}
 \sup_{n\ge 0}\{\gamma_2^{-n} \|\partial_{z_s} p_n(x_c,0)-\partial_{z_s} p_n(\tilde x_c,0)\|\}\le  \delta_p \|x_c-\tilde x_c\|^\alpha,   \quad \forall n\ge 0, 
\end{align*}
which implies that 
\begin{align*}%\label{holder-ps0LZ} 
\|\partial_{z_s} p_0(x_{c},0)-\partial_{z_s} p_0(\tilde x_{c},0)\|
\le  \delta_p   \|x_c-\tilde x_c\|^\alpha.
\end{align*}
This proves the second inequality of \eqref{ps0-sm-hold}.

{\em Step 2.   Study the tangent spaces of leaves at $x_c\in X_c$.  }

Let $E_s(x_{c}):=\{ y_s +\partial_{z_s} h_s(x_{c},0)y_s\in \mathbb R^d: y_s\in X_s\}$
be the tangent space of the stable foliation $W_s(x)$ at $x_c\in X_c$. 
From \eqref{LP-cs-MA} and \eqref{def-hs},  we see that
$$
id_s+\partial_{z_s} h_s(x_c,0)=\partial_{z_s}(\pi_s p_0)(x_c,0)+ \partial_{z_s} (\pi_{cu} p_0)(x_c,0)=\partial_{z_s} p_0(x_c,0),
$$
which together with \eqref{ps0-sm-hold}  implies that  $E_s(x_c)$ is uniformly close to $X_s$.
On the other hand, it is well known that the tangent spaces of leaves of the stable foliation is the fiber of the stable distribution (see e.g. \cite{Pesin}), which implies the invariance of $E_s(x_c)$ under the action of $DF(x_c)$. Similarly, we can define the fiber of the unstable distribution $E_u(x_c)$, which is uniformly close to $X_u$ and is invariant under $DF(x_c)$.

Then, there exists a transition matrix $P(x_c) \in Gl(d, \mathbb R)$, mapping $E_s(x_c)$ and $E_u(x_c)$ onto $X_s$ and $X_u$, respectively, such that $P(0)=id$, 
\begin{align}\label{ps-holder}
\|P(x_c)-id\|   \le \delta_E, 
\quad \|P(x_c)-P(\tilde x_c)\| \le \delta_E \|x_c-\tilde{x}_c\|^\alpha,  \quad \forall x_c,\tilde x_c\in X_c,   
\end{align}
where $\delta_E>0$ is sufficiently small. 
Moreover, we see that $X_s$ and $X_u$ are invariant under $P(F(x_c))DF(x_c)P(x_c)^{-1}$, which implies that 
\begin{align}\label{diag-pcs}
P(F(x_c))DF(x_c)P(x_c)^{-1}=\left(
\begin{array}{ccc}
*& 0& 0
\\
*& *&0
\\
*& 0&*
\end{array}
\right), 
\end{align}
where the three rows of the matrix correspond to the subspaces $X_c$, $X_s$ and $X_u$ respectively, and $*$ denote elements that may be non-zero.

{\em  Step 3.   Extend $P(x_c)$ via the Whitney's extension theorem. }

Similarly to Step 3 in Section \ref{Sec-6}, we can apply the Whitney's extension theorem (see Lemma \ref{Exten-lem} in Appendix \ref{Appen-A}) to obtain a $C^{1,\alpha}$ diffeomorphism $\Upsilon:\mathbb R^d \to \mathbb{R}^d$ such that 
\begin{align}\label{D-upsi}
\Upsilon(x_c)=x_c, \quad D\Upsilon(x_{c})=P(x_c),\quad \|D\Upsilon(x)-id\|\le \delta_\Upsilon  
\end{align}
for all $x_c\in X_c$ and $x\in \mathbb{R}^d$, where $\delta_\Upsilon>0$ is a small constant.

Finally, we only need to give some details to show the third inequality of \eqref{D-upsi}. More precisely, 
we see from \eqref{ps-holder} that the $C^{1,\alpha}$ norm of $\Upsilon-id$ on $X_c$ is small.
Then, by applying \cite[p.177, Theorem 4]{St-Book} (or \cite[Theorem 2]{Feff-RMI}), the extension of $\Upsilon-id$ onto $\mathbb R^d$ satisfies that  
$$
\|\Upsilon-id\|_{C^{1,\alpha}(\mathbb R^d)} \le C\delta_E,
$$
where $C>0$ is a constant and $\|\cdot\|_{C^{1,\alpha}(\mathbb R^d)}$ denotes the $C^{1,\alpha}$ norm in $\mathbb R^d$. 
Therefore, $\Upsilon$ is a $C^{1,\alpha}$ diffeomorphism defined on $\mathbb R^d$ such that 
\begin{align}\label{Up-small}
\|\Upsilon(x)-x\|_{C^{1,\alpha}(\mathbb R^d)} \le C\delta_E,\quad
\quad\|\Upsilon^{-1}(x)-x\|_{C^{1,\alpha}(\mathbb R^d)} \le C\delta_E,
\end{align}
by the Inverse Mapping Theorem. Hence, the new diffeomorphism $\Upsilon\circ F \circ \Upsilon^{-1}$ satisfies that 
\begin{align*}
D(\Upsilon\circ F \circ \Upsilon^{-1})(x_c)
&=D\Upsilon(F \circ \Upsilon^{-1}(x_c))DF (\Upsilon^{-1}(x_c))D\Upsilon^{-1}(x_c)
\\
&=D\Upsilon(F(x_c))DF(x_c)D\Upsilon^{-1}(x_c)=P(F(x_c))DF(x_c)P(x_c)^{-1}
\\
&=\left(
\begin{array}{ccc}
*& 0& 0
\\
*& *&0
\\
*& 0&*
\end{array}
\right)
\end{align*}
due to \eqref{D-upsi} and \eqref{diag-pcs}. Here, we still denoted $\Upsilon\circ F \circ \Upsilon^{-1}$ by $F$, and see from the above equalities that \eqref{CC0} holds. Moreover, 
\eqref{Up-small} means that \eqref{Df-hold-1} is unchanged, and $X_c$ is still the center manifold since $\Upsilon(x_c)=x_c$. It follows that \eqref{c-su} is unchanged.
This completes the proof.     
\end{proof}

\section*{Acknowledgement}

We declare that the authors are ranked in alphabetic order of their names and all of them have the same contributions to this paper.

W. M. Zhang is the corresponding author and was supported by NSF-CQ \#CSTB2023NSCQ-JQX0020 and NSFC \#12271070; Y. H. Xia was supported by NSFC \#12571165 and NSF-ZJ \#LZ24A010006; W. N. Zhang was supported by 
NSFC \#12571180,  \#12171336, and 
National Key R\&D Program of China \#2022YFA1005900.

\end{document}